
\documentclass[preprint,12pt,authoryear]{elsarticle}





\usepackage{lineno}
\usepackage{hyperref}
\usepackage{epstopdf}
\usepackage{multirow}
\usepackage{microtype}
\usepackage{graphicx}
\usepackage{subfigure}
\usepackage{booktabs}
\usepackage{amsmath}
\usepackage{amssymb}
\usepackage{mathtools}
\usepackage{amsthm}
\usepackage{color}
\usepackage{algorithm}
\usepackage{algorithmic}

\theoremstyle{plain}
\newtheorem{theorem}{Theorem}

\newtheorem{lemma}{Lemma}
\newtheorem{corollary}{Corollary}
\theoremstyle{definition}
\newtheorem{definition}{Definition}
\newtheorem{assumption}{Assumption}
\newtheorem{remark}{Remark}

\journal{ }

\begin{document}

\begin{frontmatter}



\title{Stochastic ADMM with batch size adaptation for nonconvex nonsmooth optimization} 

\author{Jiachen Jin} 
\ead{jinjiachen@nudt.edu.cn}

\author{Kangkang Deng}
\ead{freedeng1208@gmail.com}

\author{Boyu Wang}
\ead{wangboyu20@nudt.edu.cn}

\author{Hongxia Wang\corref{cor1}}
\cortext[cor1]{Corresponding author.}
\ead{wanghongxia@nudt.edu.cn}

\affiliation{organization={College of Science, National University of Defense Technology},
            city={Changsha},
            postcode={410073}, 
            state={Hunan},
            country={China}}

\begin{abstract}
Stochastic alternating direction method of multipliers (SADMM) is a popular method for solving nonconvex nonsmooth optimization in various applications. However, it typically requires an empirical selection of the static batch size for gradient estimation, resulting in a challenging trade-off between variance reduction and computational cost.
This paper proposes adaptive batch size SADMM, a practical method that dynamically adjusts the batch size based on accumulated differences along the optimization path. We develop a simple convergence analysis to handle the dependence of batch size adaptation that matches the best-known complexity with flexible parameter choices.
We further extend this adaptive scheme to reduce the overall complexity of the popular variance-reduced methods, SVRG-ADMM and SPIDER-ADMM.
Numerical results validate the effectiveness of our proposed methods.
\end{abstract}






\begin{keyword}
Stochastic ADMM \sep batch size adaptation \sep variance reduction \sep nonconvex nonsmooth optimization
\end{keyword}

\end{frontmatter}



\section{Introduction}\label{sec1}
This paper considers the problem of empirical risk (finite-sum) minimization, defined as:
		\begin{equation}\label{p}
			\min_{x,y}f(x)+g(y),\ \text{s.t.}~Ax+By=c,
		\end{equation}
		where $A\in\mathbb{R}^{m\times d_1}$, $B\in\mathbb{R}^{m\times d_2}$ and $c\in \mathbb{R}^{m}$. $f(x)=\frac{1}{n}\sum_{i=1}^{n}f_i(x):\mathbb{R}^{d_1}\to\mathbb{R}$ is a smooth function (with potentially very large $n$), and $g(y):\mathbb{R}^{d_2}\to\mathbb{R}$ is nonsmooth. Neither $f$ nor $g$ is necessarily convex. 
		This model covers a broad range of applications in science and engineering. A common instance is:
			\[\min_{x,y}f(x)+g(Ax).\]
			A key challenge in solving this problem is handling the nonconvexity of $f$ and the nonsmoothness of $g$ simultaneously. By introducing an auxiliary variable $y = Ax$, the problem is reformulated as a special case of \eqref{p}:
			\begin{equation}\label{op}
				\min_{x,y} f(x)+g(y),\ \text{s.t.}~Ax-y=0.
		\end{equation}
		Problem \eqref{op} arises frequently in fields ranging from statistics to machine learning, such as the graph-guided fused lasso, the reconstruction task in computed tomography and the training of deep neural networks with structural constraints, where $f$ typically represents a loss function and $g$ a structural regularization.

		The alternating direction method of multipliers (ADMM) \citep{GM76} is well-suited for such problems due to its variable-splitting capability, making it efficient for large-scale optimization problem \eqref{p} \citep{BPCPE11,NLRPJ15,ZGQYXZ24}. Its involves updating the primal variables $x$ and $y$ alternately, followed by a dual update, as follows:
		\begin{equation}\label{ADMM}
			\left\{
			\begin{aligned}
				&y_{k+1}=\arg\min_y \{\mathcal{L}_\beta(x_k,y,\lambda_k)\},\\
				&x_{k+1}=\arg\min_x \{\mathcal{L}_\beta(x,y_{k+1},\lambda_k)\},\\
				&\lambda_{k+1}=\lambda_k-\beta(Ax_{k+1}+By_{k+1}-c).
			\end{aligned}\right.
		\end{equation}
		Here $\mathcal{L}_\beta(\cdot)$ is the augmented Lagrangian function (ALF) of problem \eqref{p}:
		\[\mathcal{L}_\beta(x,y,\lambda)=f(x)+g(y)-\lambda^\top (Ax+By-c)+\frac{\beta}{2}\|Ax+By-c\|^2,\] 
		where $\lambda\in \mathbb{R}^{m}$ is the Lagrange multiplier and $\beta>0$ is a penalty parameter. 
		For nonconvex problems \eqref{op}, the convergence properties of the classic ADMM have been studied in \citep{guo2017IJCMconvergence}, with extensions to multi-block separable optimization in \citep{guo2017FMCconvergence}. Further analysis of problems \eqref{op} with nonseparable objectives was provided in \citep{guo2018convergence}. For general problems \eqref{p}, \cite{wu2017symmetric} proved the convergence of symmetric ADMM, which differs from the ADMM \eqref{ADMM} through the addition of an intermediate update of the multipliers. Combining both the Nesterov's acceleration technique and the relaxation scheme, \cite{bai2024accelerated} present a two-stage accelerated symmetric ADMM. \cite{WYZ19} studied the convergence of the classic ADMM under relaxer assumptions, in its multi-block and original cyclic update forms.  \cite{bai2025inexact} developed an inexact ADMM with an expansion line search step, which solves each subproblem inexactly to an adaptive accuracy and allows a larger range of dual stepsize. A comprehensive survey of the ADMM and its variants can be found in \citep{han2022survey}.
		
		Although the ADMM \eqref{ADMM}  has been widely studied in theory, it requires full-batch gradient evaluations of $f$ in its $x$-subproblem is therefore becomes computationally prohibitive for large $n$ in large-scale optimization problems \eqref{p}. Stochastic ADMM (SADMM) \citep{WB12,OHTG13,Suzuki13} addresses this issue by incorporating stochastic gradient descent (SGD) \citep{RM51} with the linearized ADMM framework \citep{YY13,OCLP15}.
		The updates in the classic SADMM for variables $y$ and $\lambda$ are consistent with the ADMM \eqref{ADMM}, while the $x$-subproblem uses a stochastic gradient approximation $\nabla f_{\mathcal{I}_k}(x_k)$ with a fixed batch size $M_k:=|\mathcal{I}_k|\equiv \mathcal{O}(\epsilon^{-1})$ to get an $\epsilon$-stationary point.
		Its iterative scheme for problem \eqref{p} is
		\begin{equation}\label{SADMM}
			\left\{
			\begin{aligned}
				&y_{k+1}=\arg\min_y \{\mathcal{L}_\beta(x_k,y,\lambda_k)\},\\
				&x_{k+1}=\arg\min_x \{\tilde{\mathcal{L}}_\beta(x,y_{k+1},\lambda_k,x_k,\nabla f_{\mathcal{I}_k}(x_k))\},\\
				&\lambda_{k+1}=\lambda_k-\beta(Ax_{k+1}+By_{k+1}-c).
			\end{aligned}\right.
		\end{equation}
		Here the approximation function is
		\[\begin{aligned}
			\tilde{\mathcal{L}}_\beta(x,y_{k+1},\lambda_k,x_k,\nabla f(x_k))
			=&f(x_k)+\nabla f(x_k)^\top (x-x_k)+\frac{1}{2\eta}\|x-x_k\|^2_G\nonumber\\
			+g(y_{k+1})&-\lambda_k^\top(Ax+By_{k+1}-c)+\frac{\beta}{2}\|Ax+By_{k+1}-c\|^2,
		\end{aligned}\]
		where $\eta>0$ and $G\succ 0$ is a positive definite matrix, usually selected as $G =rI_{d_1}- \beta\eta A^\top A$ with $r\geq \beta\eta \|A^\top A\|+1$, to avoid costly matrix inversions $(\frac{G}{\eta} +\beta A^\top A)^{-1}$. 
		Subsequent advances integrated variance reduction (VR) techniques and momentum acceleration, leading to faster SADMM variants for convex problems \citep{ZK16IJCAI,LSLKJL20,BHZ22,HGZ22}. Furthermore, VR-based extensions such as SVRG-ADMM \citep{HCL16,ZK16} and SPIDER-ADMM \citep{HCH19} have been developed for nonconvex settings, along with unified frameworks for nonsmooth nonconvex problem \eqref{op} \citep{BLZ21}. These VR-type methods use periodic full-batch gradient evaluations in outer-loop to correct the gradient variance to achieve better theoretical results. More recently, extensions such as convex ASVRG-ADMM \citep{LSLKJL20} have been developed for nonconvex objectives \citep{ZWBS24}. \citet{LHGL25} studied an inertial stochastic generalized ADMM framework with a class of VR gradient estimators.  Other innovations include zeroth-order estimators \citep{HGPH24} and inexact updates \citep{HHGZ23,ZBWW24}.
		
		In practice, existing stochastic ADMM methods rely heavily on static batch sizes that researchers must predefine. This leads to a practical trade-off: small batches reduce the cost per iteration but increase gradient variance, which impairs convergence, while large batches reduce variance but incur high computational overhead. In fact, small batches with gradient noise in the early stages can help escape sharp minima, thereby improving generalization in deep learning \citep{keskar2017large}. Recently, adaptive batch size strategies have been successfully applied to nonconvex SGD methods for solving unconstrained problems. These schemes use either predefined schedules \citep{FS12,ZYF18} or dynamic adjustments based on gradient variance \citep{DYJG16,DYJG17}, learning rates \citep{DNG17,HLT19}, or training loss \citep{SS19,LLK24}. For VR-SGD methods (e.g., SVRG and SPIDER), VR gradient estimators provide little benefit in the early stages of optimization when stochastic gradients exhibit directional coherence \citep{BRH16}. The adaptive scheme scales outer-loop batch sizes exponentially \citep{BAVSK15,LJ20} or based on historical gradient norms \citep{JWWZZL20,HG22}. Although batch size adaptation has shown promise in nonconvex SGD for unconstrained problems, such advances have yet to be extended to stochastic ADMM. This has left adaptive optimization for nonconvex constrained problems largely unexplored.

\subsection{Contributions}
		This paper introduces a simple scheme for solving constrained problems using nonconvex SADMM, balancing the computational burden with variance reduction. Specifically, this work does the following.
		
		We propose a practical SADMM (Algorithm \ref{alg1}) that scales the mini-batch size inversely with history differences. The scheme starts with mini batch sizes to accelerate initial convergence and gradually increases batch sizes as the iterations stabilize to ensure accurate, near-optimum precision.
		
		We generalize our adaptation strategy to VR variants and develop adaptive batch-size SVRG/SPIDER-ADMM (Algorithms \ref{alg2} and \ref{alg3}). These methods adjust full-batch gradients at each outer iteration based on the average differences from inner iterations, reducing redundant computations while maintaining the benefits of variance reduction.

		The presence of history differences in batch size updates makes it more challenging to establish a provable convergence guarantee than the existing analysis allows. Our analysis shows that, under flexible parameter choices, the worst-case complexity of the proposed adaptive methods matches the best-known results (Theorem \ref{AdsCR1}).
		
		Experiments on fused logistic regression and graph-guided regularized Lasso problems demonstrate the practical advantages of batch size adaptation compared to their static batch size counterparts.
		
		\subsection{Outline}
		The rest of this paper is organized as follows. Section \ref{sec2} presents some preliminary results that will be used throughout this paper. Section \ref{sec3} describes the algorithm in detail and establishes the convergence properties. In Section \ref{sec4}, we extend the batch size adaptation to two popular VR variants, followed by numerical results in Section \ref{sec5}. Section \ref{sec6} concludes the paper.

\section{Preliminaries}\label{sec2}
		First, we give some notation used in this paper. $\|\cdot\|$ denotes the Euclidean norm of a vector or the spectral norm of a matrix. $\varsigma_A$ denotes the smallest eigenvalues of matrix $A^\top A$. $\zeta_{\min}$ and $\zeta_{\max}$ denote the smallest and largest eigenvalues of positive definite matrix $G\succ 0$. Let $\|x\|^2_G =x^\top Gx$, then $\zeta_{\min}\|x\|^{2}\leq\|x\|^{2}_{G}\leq\zeta_{\max}\|x\|^{2}$. $A^+$ denotes the generalized inverse of matrix $A$. $I_n\in\mathbb{R}^{n\times n}$ is an identity matrix of size $n$. For any $x,~y\in\mathbb{R}^{n}$, $\langle x,y\rangle=x^\top y$. $\partial g$ denotes the subgradient of $g$. For a nonempty closed set $C$, ${\rm dist}(x,C) = \inf_{y\in C} \|x-y\|$ denotes the distance from $x$ to $C$.
		
		Then, we define the $\epsilon$-stationary point of problem \eqref{p}. The Lagrangian function of \eqref{p} is defined as $\mathcal{L}(w)=f(x)+g(y)-\langle\lambda, Ax+By-c\rangle$.
		\begin{definition}\label{stationary}
			For $w:=(x,y,\lambda)$, denote
			\[
			\partial\mathcal{L}(w):=\begin{bmatrix}
				\nabla f(x)-A^\top\lambda\\
				\partial g(y)-B^\top\lambda\\
				Ax+By-c
			\end{bmatrix}.
			\]
			Given $\epsilon>0$, the point $w^*:=(x^*,y^*,\lambda^*)$ is called an $\epsilon$-stationary point of problem \eqref{p} if it holds that
			\[\mathbb{E}[{\rm dist}(0,\partial \mathcal{L}(w^*))^2]\leq \epsilon.\]
		\end{definition}
		
		Next, we give some standard assumptions regarding problem \eqref{p} as follows:
		\begin{assumption}\label{nograb}
			For a smooth function $f(x)$, there exists a noisy estimation to $\nabla f(x)$, and the noisy estimation $\nabla f(x,\xi)$ satisfies $\mathbb{E}[\nabla f(x,\xi)]=\nabla f(x)$ and $\mathbb{E}\|\nabla f(x,\xi)-\nabla f (x)\|^2\leq \sigma^2$, where the expectation is taken with respect to the random variable $\xi$ and $\sigma> 0$.
		\end{assumption}
		
		From Assumption \ref{nograb}, we have $\mathbb{E}[\nabla f_\mathcal{I}(x)]=\nabla f(x)$ and $\mathbb{E}\|\nabla f_\mathcal{I}(x)-\nabla f (x)\|^2\leq \frac{\sigma^2}{M}$ for the size $M$ of a mini-batch $\mathcal{I}$ and the stochastic gradient $\nabla f_\mathcal{I}(x)=\frac{1}{M}\sum_{i=1}^{M}\nabla f(x,\xi_i)$.

		\begin{assumption}\label{Lsm}
			$\nabla f(x)$ is Lipschitz continuous with the constant $L > 0$, i.e.,
			\[\|\nabla f(x)-\nabla f(y)\|\leq L \|x-y\|,~\forall~x,y\in \mathbb{R}^{d_1}.\]
		\end{assumption}
		\begin{assumption}\label{fucor}
				$A$ is a full column rank matrix.
		\end{assumption}
		\begin{assumption}\label{lb}
			$f(x)$ and $g(y)$ are all lower bounded, and let $f^*=\inf_x f(x)>-\infty$ and $g^*=\inf_y g(y)>-\infty$.
		\end{assumption}
		\begin{assumption}\label{grab}
			The gradient of loss function $f(x)$ is bounded, i.e., there exists a constant $\mu>0$ such that for all $x$, it follows $\|\nabla f(x)\|\leq \mu$.
		\end{assumption}
		
		Assumption \ref{nograb} is a standard condition in stochastic optimization known as ``finite variance'' \citep{GLZ16,ZBWW24}.
		Assumption \ref{Lsm} is a common regularity condition in nonconvex optimization \citep{GL13}. 
		Assumptions \ref{fucor}-\ref{grab} have been used in nonconvex ADMM \citep{HLR16,HC18,JLMZ19}. 
		Note that Assumptions \ref{Lsm}, \ref{lb} and \ref{grab} hold for common objective functions in machine learning, such as sigmoid loss $f(x)=\frac{1}{1+\exp(x)}$ and $L_1$-regularization $g(y)=\|y\|_1$.

\section{Stochastic ADMM with batch size adaptation}\label{sec3}
This section proposes an adaptive batch size SADMM to solve problem \eqref{p}. Moreover, we study its convergence properties.

		\subsection{Proposed algorithms}
		The convergence of SADMM-type methods depends on bounding the descent of a merit function based on the ALF $\mathcal{L}_\beta(w_k)$. 
		For the SADMM algorithm \eqref{SADMM}, define a merit function $\varphi_k=\mathbb{E}[\mathcal{L}_\beta(w_k)+\gamma\|x_k-x_{k-1}\|^2]$ with constant $\gamma$ . The convergence analysis in \citep{HC18} has shown that the iterative decrease of $\varphi_k$ is
		\begin{equation}\label{MFbound1}
			\varphi_{k+1}-\varphi_k \leq \mathbb{E}(\frac{\gamma_1}{M_k}-\gamma_2\|x_{k+1}-x_k\|^2),
		\end{equation}
		with constants $\gamma_1$ and $\gamma_2$. 
		 Summing over $k=0$ to $K-1$ yields:
			\[
			\frac{\gamma_2}{K}\sum_{k=0}^{K-1}\mathbb{E}\|x_{k+1}-x_k\|^2\leq \frac{\varphi_0-\varphi_K}{K}+ \frac{1}{K}\sum_{k=0}^{K-1}\mathbb{E}\frac{\gamma_1}{M_k},
			\]
			To further achieve an $\epsilon$-stationary point, the above inequality suggests that the classic SADMM \eqref{SADMM} sets $K$ and $M_k$ to $\mathcal{O}(\epsilon^{-1})$.
			Note that the bound \eqref{MFbound1} naturally suggests that $M_k$ can be chosen such that the first term is at the same level as the second term, i.e., the batch size $M_k$ should adapt to $\|x_{k+1}-x_k\|^{-2}$. In this case, the descent of the merit function would be bounded solely by $\|x_{k+1}-x_k\|^{-2}$, and the convergence guarantee could be followed without setting a static batch size $M_k\equiv \mathcal{O}(\epsilon^{-1})$.
		However, since $\|x_{k+1}-x_k\|^{-2}$ is unknown at step $k$, we approximate it using the previous difference $\|x_k-x_{k-1}\|^{-2}$. This motivates the adaptive rule:
		\[M_k=\min\{c_\tau\sigma^2\|x_k-x_{k-1}\|^{-2},c_\epsilon\sigma^2\epsilon^{-1}\},\]
		where $c_\tau,c_\epsilon>0$. To incorporate this adaptive choice into the convergence analysis, we introduce a new merit function: $\phi_k=\mathbb{E}[\mathcal{L}_\beta(w_k)+\rho_1\|x_k-x_{k-1}\|^2+\rho_2\|x_{k-1}-x_{k-2}\|^2]$ with constants $\rho_1$ and $\rho_2$, which satisfies (as shown in Lemma \ref{SM1MFD}):
		\[
		\phi _{k+1}-\phi _k \leq -\delta_1 \mathbb{E}\|x_{k+1}-x_k\|^2+\delta_2 \epsilon,
		\]
		with constants $\delta_1$ and $\delta_2$. 
			Telescoping the above inequality, we have
			\[\frac{\delta_1}{K}\sum_{k=0}^{K-1}\mathbb{E}\|x_{k+1}-x_k\|^2\leq \frac{\phi_0-\phi_K}{K}+\delta_2 \epsilon,\]
			so setting $K=\mathcal{O}(\epsilon^{-1})$ suffices to reach an $\epsilon$-stationary point. The adaptive rule thus preserves convergence while allowing $M_k$ to be small when the iterates are changing rapidly, thereby reducing the overall computational cost. As the algorithm progresses, the difference norm is expected to decrease, also serving as an indicator of the optimization stage. 
		The complete adaptive batch size SADMM (AbsSADMM) is formalized in the Algorithm \ref{alg1}.
		
		\begin{algorithm}[h]
			\caption{AbsSADMM}\label{alg1}
			\begin{algorithmic}[1]
				\REQUIRE $w_0,K,\eta,\beta,c_\tau,c_\epsilon$;
				\STATE $x_{-1}=x_0$;
				\FOR{$k=0,1,\cdots,K-1$}
				
				\STATE Sample $\mathcal{I}_k$ from $[n]$ with $|\mathcal{I}_k|=\lceil M_k \rceil$, where\\
				$M_k=\min\{c_\tau\sigma^2\|x_k-x_{k-1}\|^{-2},c_\epsilon\sigma^2\epsilon^{-1}\}$;
				
				\STATE $y_{k+1}=\arg\min_y \{\mathcal{L}_\beta(x_k,y,\lambda_k)\}$;
				
				\STATE $x_{k+1}=\arg\min_x \{\tilde{\mathcal{L}}_\beta(x,y_{k+1},\lambda_k,x_k,\nabla f_{\mathcal{I}_k}(x_k))\}$;
				
				\STATE $\lambda_{k+1}=\lambda_k-\beta(Ax_{k+1}+By_{k+1}-c)$.
				\ENDFOR
			\end{algorithmic}
		\end{algorithm}

\subsection{Convergence analysis for AbsSADMM}\label{AbsSADMM}
This subsection analyses the convergence properties of Algorithm \ref{alg1}. Our method differs from the existing theoretical framework \citep{HC18} in that it dynamically scales $M_k$ with respect to $\|x_k-x_{k-1}\|^{-2}$, whereas the latter fixes $M_k\equiv\mathcal{O}(\epsilon^{-1})$. Here, we present a straightforward convergence analysis that effectively addresses the dependence of batch size adaptation and matches the optimal complexity, while enabling flexible parameter selection.
		
		Define a new merit function as follows,
		\[
		\phi_k=\mathbb{E}\left[\mathcal{L}_\beta(w_k)+\delta\|x_k-x_{k-1}\|^2
		+\gamma\|x_{k-1}-x_{k-2}\|^2\right], 
		\]
		where $\delta=\frac{1}{2c_\tau}+\frac{5}{\beta  \varsigma_A}(\frac{2}{c_\tau}+\frac{\zeta_{\max}^2}{\eta^2}+L^2)$ and $\gamma=\frac{5}{c_\tau\beta \varsigma_A}$. Let $\theta_k=\mathbb{E}[\|x_{k+1}-x_k\|^2+\|x_k-x_{k-1}\|^2+\|x_{k-1}-x_{k-2}\|^2]$. We prove the following convergence of Algorithm \ref{alg1}.
		
		\begin{theorem}\label{AdsCR1}
			Suppose the sequence $\{w_k\}$ is generated from Algorithm \ref{alg1} and Assumptions \ref{nograb}-\ref{grab} hold. Define $\psi_1=\beta^2 \|B\|\|A\|$, $\psi_2=3(\frac{1}{c_\tau}+L^2+\frac{\zeta_{\max}^2}{\eta^2})$ and $\psi_3=\frac{5}{\beta^2 \varsigma_A}(\frac{1}{c_\tau}+\frac{\zeta_{\max}^2}{\eta^2}+L^2)$. Let $c_\epsilon,\epsilon >0$ and choose constants $\eta,\beta,c_\tau>0$ such that $\rho=\frac{\zeta_{\min}}{\eta}+\frac{\beta\varsigma_A}{2}-\frac{L+1}{2}-\frac{10\zeta_{\max}^2}{\beta \varsigma_A\eta^2}-\frac{1}{2c_\tau}-\frac{10}{c_\tau \beta \varsigma_A}-\frac{5L^2}{\beta \varsigma_A}>0$.
			Then we have
			\[
			\min_{1\leq k \leq K}\mathbb{E}[{\rm dist}(0,\partial \mathcal{L}(w_k))^2]\leq  \frac{3\psi(\phi_0-\phi^*)}{\rho K}+\frac{\kappa}{c_\epsilon}\epsilon.
			\]
			where $\phi^*$ denotes a lower bound of $\phi_k$, $\psi=\max\{\psi_1,\psi_2,\psi_3\}$, and   $\kappa=\frac{1}{\rho}+\frac{20}{\rho\beta\varsigma_A}+\max\{\frac{10}{\beta^2 \varsigma_A},3\}$.
			Given $c_\epsilon=2\kappa$, it implies that $K$ satisfies
			\[K=6\psi(\phi_0-\phi^*)\rho^{-1}\epsilon^{-1}=\mathcal{O}(\epsilon^{-1}),\]
			then $w_{k^*}$ is an $\epsilon$-stationary point of \eqref{p}, where $k^*=\arg\min_k \theta_k$.
		\end{theorem}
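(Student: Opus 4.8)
The plan is to combine a single-iteration descent estimate for the merit function $\phi_k$ with a bound on the stationarity residual $\mathrm{dist}(0,\partial\mathcal{L}(w_k))^2$ in terms of three consecutive successive differences $\theta_k$, and then telescope. First I would establish the descent lemma (this is the content of the promised Lemma~\ref{SM1MFD}): using the optimality conditions of the $y$- and $x$-subproblems, the definition of the augmented Lagrangian, Lipschitz smoothness (Assumption~\ref{Lsm}) and the variance bound $\mathbb{E}\|\nabla f_{\mathcal{I}_k}(x_k)-\nabla f(x_k)\|^2\le\sigma^2/M_k$ (Assumption~\ref{nograb}), I would show
\[
\phi_{k+1}-\phi_k\;\le\;-\rho\,\mathbb{E}\|x_{k+1}-x_k\|^2\;+\;\mathbb{E}\frac{c_1\sigma^2}{M_k},
\]
for an appropriate constant $c_1$ absorbed into the $\delta,\gamma$ weights of the extra difference terms. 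The crucial point is that the $\frac{1}{M_k}$ term, by the adaptive rule $M_k=\min\{c_\tau\sigma^2\|x_k-x_{k-1}\|^{-2},c_\epsilon\sigma^2\epsilon^{-1}\}$, splits as $\frac{\sigma^2}{M_k}\le\frac{1}{c_\tau}\|x_k-x_{k-1}\|^2+\frac{\epsilon}{c_\epsilon}$ — the first piece being controllable by the $\|x_k-x_{k-1}\|^2$ term carried in $\phi_k$ (this is exactly why the merit function needs the two extra lagged difference terms, and why $\delta,\gamma$ have the stated form), and the second piece being an $\mathcal{O}(\epsilon/c_\epsilon)$ perturbation. This gives $\phi_{k+1}-\phi_k\le-\rho\,\mathbb{E}\|x_{k+1}-x_k\|^2+\frac{\kappa}{c_\epsilon}\epsilon$ after bookkeeping, with $\rho$ exactly the stated positive quantity.

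Second, I would bound the stationarity measure. By Definition~\ref{stationary}, $\mathrm{dist}(0,\partial\mathcal{L}(w_k))^2\le\|\nabla f(x_k)-A^\top\lambda_k\|^2+\mathrm{dist}(0,\partial g(y_k)-B^\top\lambda_k)^2+\|Ax_k+By_k-c\|^2$. Each term is estimated separately: the dual residual $\|Ax_k+By_k-c\|^2$ equals $\beta^{-2}\|\lambda_k-\lambda_{k-1}\|^2$, and using Assumption~\ref{fucor} ($A$ full column rank, so $\varsigma_A>0$) one bounds $\|\lambda_k-\lambda_{k-1}\|^2$ by a multiple of $\|x_k-x_{k-1}\|^2$ plus lagged differences (a standard ADMM manipulation of the $x$-subproblem optimality condition, which ties $A^\top\lambda$ to $\nabla f_{\mathcal{I}}(x)$ and the proximal term, then invokes Lipschitz continuity and the variance bound again — reintroducing a $\sigma^2/M_k$ term that is handled as above); the $x$-block residual $\|\nabla f(x_k)-A^\top\lambda_k\|^2$ is handled similarly via the $x$-subproblem optimality; the $y$-block residual uses the $y$-subproblem optimality condition. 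The upshot is $\mathbb{E}[\mathrm{dist}(0,\partial\mathcal{L}(w_k))^2]\le\psi\,\theta_{k-1}+\frac{\kappa'}{c_\epsilon}\epsilon$ (roughly), with $\psi=\max\{\psi_1,\psi_2,\psi_3\}$ collecting the three constants. I expect the main obstacle to be precisely this step: carefully tracking which lagged differences appear and verifying that all constants telescope into the clean $\psi$, $\kappa$ of the statement — in particular ensuring the extra $\sigma^2/M_k$ contributions from the residual bound do not exceed what the merit-function weights $\delta,\gamma$ can absorb.

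Finally I would telescope. Summing the descent inequality over $k=0,\dots,K-1$ gives $\rho\sum_{k=0}^{K-1}\mathbb{E}\|x_{k+1}-x_k\|^2\le\phi_0-\phi^*+\frac{\kappa}{c_\epsilon}K\epsilon$, hence $\frac{1}{K}\sum_{k}\theta_k\le\frac{3(\phi_0-\phi^*)}{\rho K}+\frac{3\kappa}{\rho c_\epsilon}\epsilon$ (the factor $3$ since $\theta_k$ bundles three consecutive differences, each appearing at most three times in the sum). Combining with the residual bound $\mathbb{E}[\mathrm{dist}(0,\partial\mathcal{L}(w_k))^2]\le\psi\theta_{k-1}+\cdots$ and taking the minimum over $k$ (which is at most the average), $\min_{1\le k\le K}\mathbb{E}[\mathrm{dist}(0,\partial\mathcal{L}(w_k))^2]\le\frac{3\psi(\phi_0-\phi^*)}{\rho K}+\frac{\kappa}{c_\epsilon}\epsilon$, with the index $k^*=\arg\min_k\theta_k$ realizing it. Then $\phi^*$ is a genuine lower bound because $\mathcal{L}_\beta(w_k)$ is bounded below under Assumptions~\ref{fucor}--\ref{grab} (the standard nonconvex-ADMM argument: the dual term is controlled by the primal progress, leaving $f+g$ which is bounded by Assumption~\ref{lb}). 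Setting $c_\epsilon=2\kappa$ makes the second term $\epsilon/2$, and choosing $K=6\psi(\phi_0-\phi^*)\rho^{-1}\epsilon^{-1}$ makes the first term $\epsilon/2$, yielding an $\epsilon$-stationary point and the claimed $\mathcal{O}(\epsilon^{-1})$ iteration complexity.
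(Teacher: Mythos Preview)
Your proposal is correct and follows essentially the same route as the paper: establish the one-step descent of the merit function $\phi_k$ via Lemma~\ref{SM1MFD} (using the key splitting $\sigma^2/M_k\le\tfrac{1}{c_\tau}\|x_k-x_{k-1}\|^2+\tfrac{\epsilon}{c_\epsilon}$ to absorb the adaptive-batch term into the lagged differences carried by $\phi_k$), bound each of the three blocks of ${\rm dist}(0,\partial\mathcal{L}(w_k))^2$ by $\psi_i\theta_k$ plus an $\epsilon/c_\epsilon$ remainder, telescope, and take the minimum. The only minor imprecisions are cosmetic: the descent lemma produces the constant $\bigl(\tfrac{1}{2}+\tfrac{10}{\beta\varsigma_A}\bigr)$ rather than $\kappa$ in front of $\epsilon/c_\epsilon$ (the full $\kappa$ emerges only after combining with the residual bounds), and the lower bound on $\phi_k$ in the paper is obtained explicitly via \eqref{SM1Alamk1} together with Assumption~\ref{grab}, not just a generic ``dual controlled by primal'' argument---but your outline already flags that Assumption~\ref{grab} is needed there.
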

		\begin{proof}
			Below we provide a sketch of the proof while the detailed one can be found in \ref{THE1}.
			
			Step 1. According to the optimality condition of each subproblem in Algorithm \ref{alg1}, we get the upper bound of $\mathbb{E}\|\lambda_{k+1}-\lambda_k\|^2$ (see also Lemma \ref{SM1bdual}),
			\[\begin{aligned}
				\mathbb{E}\|\lambda_{k+1}-\lambda_k\|^2 \leq & \frac{5}{\varsigma_A}\mathbb{E}\|\nabla f_{\mathcal{I}_k}(x_k)-\nabla f(x_k)\|^2+\frac{5}{\varsigma_A}\mathbb{E}\|\nabla f(x_{k-1})-\nabla f_{\mathcal{I}_{k-1}}(x_{k-1})\|^2\\
				&+\frac{5\zeta_{\max}^2}{\varsigma_A\eta^2}\mathbb{E}\|x_{k+1}-x_k\|^2+\frac{5}{\varsigma_A}(\frac{\zeta_{\max}^2}{\eta^2}+L^2)\mathbb{E}\|x_k-x_{k-1}\|^2.
			\end{aligned}\]
			
			Step 2. By the upper bound of $\mathbb{E}\|\lambda_{k+1}-\lambda_k\|^2$, we bound the decrease of $\phi_k$ over an epoch $k$ by follows (see also Lemma \ref{SM1MFD}),
			\[\phi_{k+1}\leq\phi_k-\rho\mathbb{E}\|x_{k+1}-x_k\|^2+(\frac{1}{2}+\frac{10}{\beta\varsigma_A})\frac{\epsilon}{ c_\epsilon}.\]
			
			Step 3. Telescoping the above inequality over $k$ and combining the lower boundedness of $\{\phi_k\}$, we obtain the claimed result.
		\end{proof}
		
		\begin{remark}
			Theorem \ref{AdsCR1} establishes convergence guarantees for Algorithm \ref{alg1} under the condition $\rho>0$, which allows for wide flexibility in choosing the positive parameters $\eta,\beta,c_\tau$. To ensure $\rho>0$, the inequality
			\begin{equation}\label{rhopar}
				-2\rho=\frac{20\zeta_{\max}^2}{\beta \varsigma_A\eta^2}-\frac{2\zeta_{\min}}{\eta}+\Theta<0,
			\end{equation}
			must hold, where $\Theta =1+L+\frac{1}{c_\tau}+\frac{20}{c_\tau \beta \varsigma_A}+\frac{10L^2}{\beta \varsigma_A}-\beta\varsigma_A$.
			This inequality forms a quadratic expression in $\frac{1}{\eta}$, so its discriminant must satisfy
			\begin{equation}\label{etapar}
				\Delta_\eta:=4\zeta_{\min}^2-\frac{80\zeta_{\max}^2}{\beta \varsigma_A}\Theta>0.
			\end{equation}
			Given $\varsigma_A>0$, \eqref{etapar} is ensured if $\Theta<0$, leading to a secondary quadratic inequality in $\beta$,
			\begin{equation}\label{betapar}
				c_\tau\varsigma_A^2\beta^2-(1+c_\tau+c_\tau L))\varsigma_A \beta-10(2+c_\tau L^2)>0.
			\end{equation} 
			The positive leading coefficient of the quadratic and the guaranteed positive discriminant (for $c_\tau>0$) imply that $\beta>\beta^+:=\frac{1+c_\tau+c_\tau L+\sqrt{(1+c_\tau+c_\tau L)^2+40c_\tau(2+c_\tau L^2)}}{2c_\tau\varsigma_A}$, where $\beta^+$ is the larger root. Then \eqref{betapar} holds and so does \eqref{etapar}.
			Returning to the original quadratic \eqref{rhopar} in $\frac{1}{\eta}$, the feasible region for $\eta$ is determined by its larger root $\frac{1}{\eta^+}:=\frac{(2\zeta_{\min}+\sqrt{\Delta_\eta})\varsigma_A \beta}{40\zeta_{\max}^2}$. Thus, the parameter space that ensures convergence is
			\[c_\tau>0,\beta>\beta^+,\eta>\eta^+.\]
		\end{remark}
		
		\begin{remark}
			Theorem \ref{AdsCR1} states that the algorithm \ref{alg1} achieves a $\mathcal{O}(\frac{1}{K})$ convergence rate by batch size adaptation. The complexity is derived as
			\[
			\sum_{k=0}^{K-1}M_k=\sum_{k=0}^{K-1}\min\{c_\tau\sigma^2\|x_k-x_{k-1}\|^{-2},c_\epsilon\sigma^2\epsilon^{-1}\}
			\leq\underset{\rm complexity~of~SADMM}{\underbrace{Kc_\epsilon\sigma^2\epsilon^{-1}=\mathcal{O}(\epsilon^{-2})}}.
			\]
			This shows that the worst-case complexity of AbsSADMM matches the best known result of SADMM \citep{HC18,ZBWW24}. In particular, the adaptive term $c_\tau\sigma^2\|x_k-x_{k-1}\|^{-2}$ can be significantly lower than the static term $c_\epsilon\sigma^2\epsilon^{-1}$, particularly during initial iterations. Consequently, AbsSADMM achieves practical efficiency gains over static-batch SADMM by using smaller batches early in the optimization.
		\end{remark}
        
\section{Extentions to variance reduction algorithms}\label{sec4}
We further extend the batch-size adaptation scheme to the popular VR algorithms SVRG-ADMM \citep{HCL16,ZK16} and SPIDER-ADMM \citep{HCH19}. Although they theoretically outperform SADMM by reducing the variance in gradient estimation, in practice they often suffer from slow convergence due to the full gradient evaluation in each outer loop. We refer to the adaptive methods as AbsSVRG-ADMM and AbsSPIDER-ADMM in Algorithm \ref{alg2} and Algorithm \ref{alg3}, respectively.
		
		\begin{algorithm}[h]
			\caption{AbsSVRG-ADMM}\label{alg2}
			\begin{algorithmic}[1]
				\REQUIRE $w_0^1,K,T,S=\lceil K/T\rceil,\eta,\beta,c_\tau,c_\epsilon,\tau_1,b$;
				\STATE $\tilde{x}^0=x_0^1$;
				
				\FOR{$s=1,2,\cdots,S$}
				
				\STATE Sample $\mathcal{N}_s$ from $[n]$ with $|\mathcal{N}_s|=\lceil N_s\rceil$, where batch size
				$N_s=\min\{c_\tau\sigma^2\tau_s^{-1},c_\epsilon\sigma^2\epsilon^{-1},n\}$;
				
				\STATE $g^s=\nabla f_{\mathcal{N}_s} (\tilde{x}^{s-1})$;
				
				\STATE Set $\tau_{s+1}=0$;
				
				\FOR{$t=0,1,\cdots,T-1$}
				
				\STATE Uniformly randomly pick a mini-batch $\mathcal{I}$ (with replacement) from $[n]$ with $|\mathcal{I}|=b$;
				
				\STATE $v_t^s=\nabla f_{\mathcal{I}}(x_t^s)-\nabla f_{\mathcal{I}}(\tilde{x}^{s-1})+g^s$;
				
				\STATE $y_{t+1}^s=\arg\min_y \{\mathcal{L}_\beta(x_t^s,y,\lambda_t^s)\}$;
				
				\STATE $x_{t+1}^s=\arg\min_x \{\tilde{\mathcal{L}}_\beta(x,y_{t+1}^s,\lambda_t^s,x_t^s,v_t^s)\}$;
				
				\STATE $\lambda_{t+1}^s=\lambda_t^s-\beta(Ax_{t+1}^s+By_{t+1}^s-c)$;
				
				\STATE $\tau_{s+1}\gets \tau_{s+1}+\|x_{t+1}^s-x_t^s\|^2/T$;
				\ENDFOR
				\STATE $\tilde{x}^s=x_0^{s+1}=x^s_T$, $y^{s+1}_0=y^s_T$, $\lambda^{s+1}_0=\lambda^s_T$.
				\ENDFOR
			\end{algorithmic}
		\end{algorithm}
		
		\begin{algorithm}[h]
			\caption{AbsSPIDER-ADMM}\label{alg3}
			\begin{algorithmic}[1]
				\REQUIRE $w_0,K,q,\eta,\beta,c_\tau,c_\epsilon,\tau_0,b$;
				
				\FOR{$k=0,1,\cdots,K-1$}
				
				\IF{mod $(k,q)=0$}
				
				\STATE Sample $\mathcal{N}_k$ from $[n]$ with $|\mathcal{N}_k|=\lceil N_k\rceil$, where batch size
				$N_k=\min\{c_\tau\sigma^2\tau_k^{-1},c_\epsilon\sigma^2\epsilon^{-1},n\}$, and compute $v_k=\nabla f_{\mathcal{N}_k}(x_k)$;
				
				\STATE Set $\tau_{k+1}=0$;
				
				\ELSE
				\STATE Uniformly randomly pick a mini-batch $\mathcal{I}$ (with replacement) from $[n]$ with $|\mathcal{I}|=b$;
				
				\STATE $v_k=\nabla f_{\mathcal{I}}(x_k)-\nabla f_{\mathcal{I}}(x_{k-1})+v_{k-1}$;
				
				\ENDIF
				
				\STATE $y_{k+1}=\arg\min_y \{\mathcal{L}_\beta(x_k,y,\lambda_k)\}$;
				
				\STATE $x_{k+1}=\arg\min_x \{\tilde{\mathcal{L}}_\beta(x,y_{k+1},\lambda_k,x_k,v_k)\}$;
				
				\STATE $\lambda_{k+1}=\lambda_k-\beta(Ax_{k+1}+By_{k+1}-c)$;
				
				\STATE $\tau_{k+1} \gets \tau_{k+1}+\|x_{k+1}-x_k\|^2/q$.
				
				\ENDFOR
			\end{algorithmic}
		\end{algorithm}

\subsection{Convergence analysis for AbsSVRG-ADMM}
		Let $w_t^s=(x_t^s,y_t^s,\lambda_t^s)$. Define a merit function $\phi_k$ as follows:
		\[
		\phi_t^s=\mathbb{E}[\mathcal{L}_\beta(w_t^s)+\frac{5}{\beta\varsigma_A}(\frac{\zeta_{\max}^2}{\eta^2}+L^2)\|x_t^s-x_{t-1}^s\|^2
		+\frac{5L^2}{b\beta\varsigma_A}\|x_{t-1}^s-\tilde{x}^{s-1}\|^2+\delta_t\|x_t^s-\tilde{x}^{s-1}\|^2],
		\]
		where the positive sequence $\{\delta_t\}$ satisfies $\delta_t=\frac{L^2}{2b}+\frac{10L^2}{b\beta\varsigma_A}+(1+\rho)\delta_{t+1}$
		with $\rho>0$. Let $\theta_t^s=\mathbb{E}[\|x_{t+1}^s-x_t\|^2+\|x_t^s-x_{t-1}^s\|^2+\frac{1}{b}(\|x_t^s-\tilde{x}^s\|^2+\|x_{t-1}^s-\tilde{x}^s\|^2)]$ and $K = ST$ represents the total number of iterations. We give the convergence of Algorithm \ref{alg2}.
		\begin{theorem}\label{AdsCR2}
			Suppose the sequence $\{w_t^s\}$ is generated from Algorithm \ref{alg2} and Assumptions \ref{nograb}-\ref{grab} hold. Let $\epsilon,c_\epsilon>0$. Choose the parameters $\tau_1,\eta,\beta,c_\tau,\rho>0$ used in Algorithm \ref{alg2} such that $\tau_1\leq \epsilon S$ and $\Omega_t:=\frac{\zeta_{\min}}{\eta}+\frac{\beta\varsigma_A}{2}-\frac{5}{\beta \varsigma_A}(L^2+\frac{2}{c_\tau}+\frac{2\zeta_{\max}^2}{\eta^2})-\frac{1+c_\tau(L+1)}{2c_\tau}-(1+\frac{1}{\rho})\delta_{t+1}>0$. Define $\psi_1=\beta^2 \|B\|\|A\|$, $\psi_2=3(L^2+\frac{\zeta_{\max}^2}{\eta^2})$ and $\psi_3=\frac{5}{\beta^2  \varsigma_A}(\frac{\zeta_{\max}^2}{\eta^2}+L^2)$. Then we have
			\[
			\min_{1\leq k \leq K}\mathbb{E}[{\rm dist}(0,\partial \mathcal{L}(w_t^s))^2]
			\leq\frac{2\omega_1(\phi_0^1-\phi^*)}{\gamma K}+\omega_2(\frac{1}{c_\tau}+\frac{1}{c_\epsilon})\epsilon.
			\]
			where $\gamma=\min\{\Omega_t,\frac{L^2}{2}\}$, $\omega_1=\psi+\frac{1}{c_\tau}\max\{3,\frac{10}{\varsigma_A\beta^2}\}$, $\psi=\max\{\psi_1,\psi_2,\psi_3\}$, $\omega_2=\max\{3,\frac{10}{\varsigma_A\beta^2}\}+1+\frac{20}{\beta\varsigma_A}$, and $\phi^*$ denotes a lower bound of $\phi_t^s$.
		\end{theorem}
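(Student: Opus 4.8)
The plan is to follow the three-step template of Theorem~\ref{AdsCR1} -- a bound on the dual increment, a one-step decrease of a merit function, and a global telescope -- adapted to the double-loop structure and to the fact that the outer batch size $N_s$ is driven by the \emph{previous} epoch's average displacement $\tau_s$. The first ingredient I would establish is the variance of the recursive estimator $v_t^s=\nabla f_{\mathcal{I}}(x_t^s)-\nabla f_{\mathcal{I}}(\tilde{x}^{s-1})+g^s$: writing $v_t^s-\nabla f(x_t^s)$ as the sum of the conditionally zero-mean term $\nabla f_{\mathcal{I}}(x_t^s)-\nabla f_{\mathcal{I}}(\tilde{x}^{s-1})-\bigl(\nabla f(x_t^s)-\nabla f(\tilde{x}^{s-1})\bigr)$ and the epoch-level bias $g^s-\nabla f(\tilde{x}^{s-1})$, Assumptions~\ref{nograb} and \ref{Lsm} (per-sample Lipschitzness for the first, finite variance with batch $N_s$ for the second) give
\[
\mathbb{E}\|v_t^s-\nabla f(x_t^s)\|^2\le \frac{L^2}{b}\,\mathbb{E}\|x_t^s-\tilde{x}^{s-1}\|^2+\frac{\sigma^2}{N_s}.
\]
Since $N_s=\min\{c_\tau\sigma^2\tau_s^{-1},c_\epsilon\sigma^2\epsilon^{-1},n\}$, I would record the two estimates $\sigma^2/N_s\le \tau_s/c_\tau$ and $\sigma^2/N_s\le \epsilon/c_\epsilon$, to be invoked selectively later.

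\textbf{Dual bound and merit-function descent.} From the optimality condition of the $x$-subproblem together with the $\lambda$-update one gets $A^\top\lambda_{t+1}^s=v_t^s+\frac1\eta G(x_{t+1}^s-x_t^s)$; subtracting the same identity at step $t-1$, inserting $\pm\nabla f(x_t^s),\pm\nabla f(x_{t-1}^s)$, using Assumption~\ref{fucor} as in Lemma~\ref{SM1bdual} to pass from $\|A^\top(\lambda_{t+1}^s-\lambda_t^s)\|$ to $\sqrt{\varsigma_A}\,\|\lambda_{t+1}^s-\lambda_t^s\|$, and substituting the variance bound, yields the analogue of Lemma~\ref{SM1bdual}: a $5/\varsigma_A$-weighted bound on $\mathbb{E}\|\lambda_{t+1}^s-\lambda_t^s\|^2$ in terms of $\mathbb{E}\|x_{t+1}^s-x_t^s\|^2$, $\mathbb{E}\|x_t^s-x_{t-1}^s\|^2$, $\frac1b\bigl(\mathbb{E}\|x_t^s-\tilde{x}^{s-1}\|^2+\mathbb{E}\|x_{t-1}^s-\tilde{x}^{s-1}\|^2\bigr)$ and $\sigma^2/N_s$, with the sole care that for $t=0$ the ``previous'' quantities refer to the end of epoch $s-1$ (so $\tilde{x}^{s-2}$ and $N_{s-1}$ enter there). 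Then, combining the trivial decrease of the exact $y$-update, the $\bigl(\frac{\zeta_{\min}}{\eta}+\beta\varsigma_A\bigr)$-strong convexity of the linearized $x$-subproblem with $L$-smoothness of $f$ and a Young split of $\langle\nabla f(x_t^s)-v_t^s,\,x_{t+1}^s-x_t^s\rangle$, and the identity $\mathcal{L}_\beta(w_{t+1}^s)-\mathcal{L}_\beta(x_{t+1}^s,y_{t+1}^s,\lambda_t^s)=\frac1\beta\|\lambda_{t+1}^s-\lambda_t^s\|^2$, and substituting the dual bound, I expect the analogue of Lemma~\ref{SM1MFD},
\[
\phi_{t+1}^s\le \phi_t^s-\Omega_t\,\mathbb{E}\|x_{t+1}^s-x_t^s\|^2+\Bigl(\tfrac12+\tfrac{10}{\beta\varsigma_A}\Bigr)\frac{\sigma^2}{N_s},
\]
in which the residual $\|x_t^s-\tilde{x}^{s-1}\|^2$ is absorbed exactly by the recursion for $\delta_t$ -- the $\frac{L^2}{2b}$ from the Young split, the $\frac{10L^2}{b\beta\varsigma_A}$ from the dual term, and the $(1+\rho)$ factor from $\|x_{t+1}^s-\tilde{x}^{s-1}\|^2\le(1+\rho)\|x_{t+1}^s-x_t^s\|^2+(1+\tfrac1\rho)\|x_t^s-\tilde{x}^{s-1}\|^2$ -- which is precisely why $\Omega_t$ carries the $-(1+\tfrac1\rho)\delta_{t+1}$ term; keeping $\delta_0$ bounded forces $\rho$ to be small (e.g.\ $\rho=\mathcal{O}(1/T)$), which is compatible with the standing hypothesis $\Omega_t>0$.

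\textbf{Telescoping and the stationarity estimate.} Summing the one-step inequality over $t=0,\dots,T-1$ and $s=1,\dots,S$, and reconciling each epoch transition via $\phi_0^{s+1}\le \phi_T^s+\frac{5L^2}{b\beta\varsigma_A}\|x_T^s-x_{T-1}^s\|^2$ (using $\|x_0^{s+1}-\tilde{x}^s\|^2=0$ since $\tilde{x}^s=x_0^{s+1}$), I would obtain
\[
\gamma\sum_{s,t}\mathbb{E}\|x_{t+1}^s-x_t^s\|^2+\frac{L^2}{2}\sum_{s,t}\frac1b\,\mathbb{E}\|x_t^s-\tilde{x}^{s-1}\|^2\le C_0(\phi_0^1-\phi^*)+\Bigl(\tfrac12+\tfrac{10}{\beta\varsigma_A}\Bigr)\sum_{s,t}\frac{\sigma^2}{N_s},
\]
with $\gamma=\min\{\Omega_t,\tfrac{L^2}2\}$ (the $\frac{L^2}2$ being $b$ times the per-step drift $\delta_t-(1+\rho)\delta_{t+1}$) and $C_0$ an explicit constant. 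For $s\ge2$ I would use $\sigma^2/N_s\le\tau_s/c_\tau$, which turns the noise sum into $\frac1{c_\tau}\sum_{s\le S-1}\sum_t\|x_{t+1}^s-x_t^s\|^2$ and can be folded back into the left-hand side (shrinking the effective coefficient -- the origin of the $\frac1{c_\tau}$ in $\omega_1$), while the $s=1$ term is controlled by $\tau_1\le\epsilon S$ and yields the $\frac1{c_\tau}\epsilon$ floor. Finally, as for Theorem~\ref{AdsCR1}, the subproblem optimality conditions give
\[
{\rm dist}(0,\partial\mathcal{L}(w_{t+1}^s))^2\le \psi\bigl(\|x_{t+1}^s-x_t^s\|^2+\|x_t^s-x_{t-1}^s\|^2+\|\lambda_{t+1}^s-\lambda_t^s\|^2\bigr)+\max\{3,\tfrac{10}{\varsigma_A\beta^2}\}\,\mathbb{E}\|v_t^s-\nabla f(x_t^s)\|^2,
\]
with $\psi=\max\{\psi_1,\psi_2,\psi_3\}$, and the last term splits as $\frac{L^2}{b}\|x_t^s-\tilde{x}^{s-1}\|^2$ (absorbed into $\theta_t^s$) plus $\sigma^2/N_s\le\epsilon/c_\epsilon$ (the $\frac1{c_\epsilon}\epsilon$ floor); averaging over $k=1,\dots,K$, taking the minimum, and invoking the telescoped bound with $k^*=\arg\min_k\theta_t^s$ then gives the claimed inequality with $\omega_1=\psi+\frac1{c_\tau}\max\{3,\frac{10}{\varsigma_A\beta^2}\}$ and $\omega_2=\max\{3,\frac{10}{\varsigma_A\beta^2}\}+1+\frac{20}{\beta\varsigma_A}$.

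\textbf{Main obstacle.} The genuinely new difficulty, absent from both the static-batch SVRG-ADMM analysis and Theorem~\ref{AdsCR1}, is the cross-epoch coupling caused by $N_s$ depending on $\tau_s=\frac1T\sum_t\|x_{t+1}^{s-1}-x_t^{s-1}\|^2$: no self-contained per-epoch descent is available, so one must run a single global telescope in which (i) the adaptive noise $\sigma^2/N_s$ is reabsorbed into the step-weighted sum at the price of a $c_\tau$-dependent shrinkage; (ii) the first-epoch noise is tamed by the input condition $\tau_1\le\epsilon S$; and (iii) the merit-function terms anchored at the \emph{moving} snapshot $\tilde{x}^{s-1}$ are reconciled at each boundary where $\tilde{x}$ jumps, simultaneously with the time-varying coefficients $\delta_t$. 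Verifying that $\Omega_t$ remains positive after the reabsorption and the epoch-boundary correction, so that all three mechanisms close at once, is the crux of the argument.
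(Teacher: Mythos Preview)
Your plan matches the paper's route and correctly isolates the cross-epoch coupling as the new difficulty. Two pieces of bookkeeping are not quite right as written.

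First, the $\delta_t$ recursion is not meant to absorb $\|x_t^s-\tilde{x}^{s-1}\|^2$ \emph{exactly}: it is deliberately over-provisioned so that a surplus $-\frac{L^2}{2b}\,\mathbb{E}\|x_t^s-\tilde{x}^{s-1}\|^2$ survives in the one-step descent. The correct per-step inequality is
\[
\phi_{t+1}^s\le\phi_t^s-\chi_t\,\mathbb{E}\|x_{t+1}^s-x_t^s\|^2-\tfrac{L^2}{2b}\,\mathbb{E}\|x_t^s-\tilde{x}^{s-1}\|^2+\bigl(\tfrac12+\tfrac{10}{\beta\varsigma_A}\bigr)\tfrac{\sigma^2}{N_s},
\]
with $\chi_t=\Omega_t+\tfrac{1}{2c_\tau}+\tfrac{10}{c_\tau\beta\varsigma_A}$; the $1/c_\tau$ corrections enter only \emph{after} you telescope and reabsorb $\sum_s T\tau_s/c_\tau$ into the step sum, turning $\chi_t$ into $\Omega_t$. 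The $\tfrac1{c_\tau}$ inside $\omega_1$ is a separate effect: it comes from a \emph{second} occurrence of $\sigma^2/N_s$ in the stationarity bounds for $\|A^\top\lambda_{t+1}^s-\nabla f(x_{t+1}^s)\|^2$ and $\|Ax_{t+1}^s+By_{t+1}^s-c\|^2$, not from the merit-function telescope. You later place both sums on the left of the telescoped inequality, so you evidently need this surplus; your displayed one-step should be corrected to match.

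Second, routing the epoch transition through $\phi_0^{s+1}$ does not close: even with the natural convention $x_{-1}^{s+1}:=x_{T-1}^s$, the merit function at $t=0$ carries $\frac{5L^2}{b\beta\varsigma_A}\|x_{-1}^{s+1}-\tilde{x}^s\|^2=\frac{5L^2}{b\beta\varsigma_A}\|x_{T-1}^s-x_T^s\|^2$, whereas the dual-increment bound at the seam produces $\frac{5L^2}{b\beta\varsigma_A}\|x_{T-1}^s-\tilde{x}^{s-1}\|^2$ (anchored at the \emph{old} snapshot), and these do not cancel. The clean move is to bound $\phi_1^{s+1}-\phi_T^s$ directly: run one full iteration across the boundary, use $v_0^{s+1}=\nabla f_{\mathcal{N}_{s+1}}(x_T^s)$ (variance $\sigma^2/N_{s+1}$) in the $x$-step, and note that $\lambda_1^{s+1}-\lambda_0^{s+1}$ also involves $v_{T-1}^s$ (hence $\tilde{x}^{s-1}$ and $N_s$), so both $\tau_s$ and $\tau_{s+1}$ appear at the seam---exactly the anchor terms already sitting in $\phi_T^s$. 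With this boundary inequality in place, the global telescope and the remainder of your argument go through as you describe.
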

		
		\begin{proof}
			Below we also provide a sketch and refer to \ref{THE2} for details.
			
			Step 1. We first establish the upper bound for the estimation variance $\mathbb{E}_{0,s}\|v_t^s-\nabla f(x_t^s)\|^2$. (see also Lemma \ref{SM2SG})
			\[
			\mathbb{E}_{0,s}\|v_t^s-\nabla f(x_t^s)\|^2\leq \frac{L^2}{b} \mathbb{E}_{0,s}\|x_t^s-\tilde{x}^{s-1}\|^2+\frac{I_{(N_s<n)}}{N_s}\sigma^2,
			\]
			where $\mathbb{E}_{t,s}(\cdot)$ denotes $\mathbb{E}(\cdot|x_0^1,x_0^2,\dots,x_2^1,\dots,x_t^s)$, and $I_{(A)} = 1$ if the event $A$ occurs and 0 otherwise.
			
			Step 2. According to the optimality condition of each subproblem in Algorithm \ref{alg2}, we have an upper bound of $\mathbb{E}_{0,s}\|\lambda_{t+1}^s-\lambda_t^s\|^2$ (see also Lemma \ref{SM2bdual}),
			\[\begin{aligned}
				\mathbb{E}_{0,s}\|\lambda_{t+1}^s-\lambda_t^s\|^2\leq &\frac{5}{\varsigma_A}\mathbb{E}_{0,s}\|v_t^s-\nabla f(x_t^s)\|^2+\frac{5}{\varsigma_A}\mathbb{E}_{0,s}\|\nabla f(x_{t-1}^s)-v_{t-1}^s\|^2\\
				&+\frac{5\zeta_{\max}^2}{\varsigma_A\eta^2}\mathbb{E}_{0,s}\|x_{t+1}^s-x_t^s\|^2+\frac{5}{\varsigma_A}(\frac{\zeta_{\max}^2}{\eta^2}+L^2)\mathbb{E}_{0,s}\|x_t^s-x_{t-1}^s\|^2.
			\end{aligned}\]

			Step 3. By the upper bound of $\mathbb{E}\|\lambda_{t+1}^s-\lambda_t^s\|^2$, we get (see also Lemma \ref{SM2MFD}),
			$$\frac{1}{ST}\sum_{s=1}^{S}\sum_{t=0}^{T-1}(\Omega_t\mathbb{E}\|x_{t+1}^s-x_t^s\|^2+\frac{L^2}{2b}\mathbb{E}\|x_t^s-\tilde{x}^{s-1}\|^2)\leq \frac{\phi_0^1-\phi^*}{ST}+(\frac{1}{2}+\frac{10}{\beta\varsigma_A})(\frac{1}{c_\tau}+\frac{1}{c_\epsilon})\epsilon.$$
			
			Step 4. Combining the above inequality and Definition \ref{stationary}, we obtain the claimed result.
		\end{proof}
		
		The following corollary provides the $\mathcal{O}(\frac{1}{K})$ convergence rate and the oracle complexity of Algorithm \ref{alg2} under certain choices of parameters. 
		\begin{corollary}\label{AbssvrgOC}
			Under the setting of Theorem \ref{AdsCR2}, choose $T=[n^{\frac{1}{3}}]$, $b=[n^{\frac{2}{3}}]$, $\rho=\frac{1}{T}$, $\eta=\frac{2\zeta_{\min}}{5L^2+L+2}$, $c_\tau=c_\epsilon=9+\frac{12}{L^2}(3+\beta^2 \|B\|\|A\|)$ and
			\[
			\beta\geq\max\left\{\frac{20}{\varsigma_A},\sqrt{\frac{10}{3\varsigma_A}},\sqrt{\frac{3}{\|B\|\|A\|}(L^2+\frac{\zeta_{\max}^2}{\eta^2})},\frac{1}{\varsigma_A}\sqrt{10(2+9L^2+\frac{2\zeta_{\max}^2}{\eta^2})}\right\}.
			\]
			Then, to find an $\epsilon$-approximate solution, the total number of iterations required by Algorithm \ref{alg2} is given by \[K=\frac{12(3+\|B\|\|A\|)(\phi_0^1-\phi^*)}{L^2 \epsilon}=\mathcal{O}(\epsilon^{-1}),\]
			and the oracle complexity of Algorithm \ref{alg2} is given by
			\[
			\sum_{s=1}^{S}\min\{c_\tau\sigma^2\tau_s^{-1},c_\epsilon\sigma^2\epsilon^{-1},n\}+Kb
			\leq\underset{\rm complexity~of~SVRG-ADMM}
			{\underbrace{S\min\{c_\epsilon\sigma^2\epsilon^{-1},n\}+Kb=\mathcal{O}(n+n^{\frac{2}{3}}\epsilon^{-1})}}.
			\]
		\end{corollary}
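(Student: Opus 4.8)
The plan is to specialize the estimate of Theorem~\ref{AdsCR2} to the parameters listed in the statement. The first task is to check that these choices meet the hypotheses of that theorem. The requirement $\tau_1\le\epsilon S$ is harmless: once $K$ is fixed (below), $S=\lceil K/T\rceil\ge 1$, so it is enough to take $\tau_1\le\epsilon$. The substantive point is verifying $\Omega_t>0$ for every $t\in\{0,\dots,T-1\}$. I would unroll the backward recursion $\delta_t=\frac{L^2}{2b}+\frac{10L^2}{b\beta\varsigma_A}+(1+\rho)\delta_{t+1}$ to obtain $\delta_{t+1}\le\frac{c}{\rho}\big((1+\rho)^{T}-1\big)$ with $c=\frac{L^2}{b}\big(\frac12+\frac{10}{\beta\varsigma_A}\big)$; the choice $\rho=1/T$ gives $(1+\rho)^{T}\le e$, hence $(1+\frac{1}{\rho})\delta_{t+1}\le(1+T)(e-1)\,cT$, and since $T=[n^{1/3}]$, $b=[n^{2/3}]$ the factor $T^2/b$ is $\mathcal{O}(1)$, so this term is an explicit multiple of $L^2$. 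Substituting $\eta=\frac{2\zeta_{\min}}{5L^2+L+2}$, so that $\frac{\zeta_{\min}}{\eta}=\frac{5L^2+L+2}{2}$, together with the stated lower bounds on $\beta$---in particular $\beta\ge\frac{1}{\varsigma_A}\sqrt{10(2+9L^2+2\zeta_{\max}^2/\eta^2)}$, which is designed precisely to dominate $\frac{5}{\beta\varsigma_A}(L^2+2/c_\tau+2\zeta_{\max}^2/\eta^2)$---one checks that $\Omega_t\ge\frac{L^2}{2}>0$, which simultaneously fixes $\gamma=\min\{\Omega_t,\frac{L^2}{2}\}=\frac{L^2}{2}$.

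Next I would reduce the constants $\psi,\omega_1,\omega_2$. The bound $\beta^2\ge\frac{10}{3\varsigma_A}$ forces $\max\{3,10/(\varsigma_A\beta^2)\}=3$, and together with $\beta^2\ge\frac{3}{\|B\|\|A\|}(L^2+\zeta_{\max}^2/\eta^2)$ it yields $\psi_1\ge\psi_2\ge\psi_3$, so $\psi=\psi_1=\beta^2\|B\|\|A\|$; hence $\omega_1=\beta^2\|B\|\|A\|+\frac{3}{c_\tau}$ and $\omega_2=4+\frac{20}{\beta\varsigma_A}\le 5$ (using $\beta\ge 20/\varsigma_A$). With these simplifications the bound of Theorem~\ref{AdsCR2} becomes
\[
\min_{1\le k\le K}\mathbb{E}[{\rm dist}(0,\partial\mathcal{L}(w_t^s))^2]\le\frac{4\omega_1(\phi_0^1-\phi^*)}{L^2 K}+\frac{2\omega_2}{c_\tau}\epsilon .
\]
The value $c_\tau=c_\epsilon=9+\frac{12}{L^2}(3+\beta^2\|B\|\|A\|)$ makes the second term at most a fixed fraction of $\epsilon$ (and renders $3/c_\tau$ negligible next to $\beta^2\|B\|\|A\|$ in $\omega_1$), and then choosing $K$ of the stated order $\mathcal{O}(\epsilon^{-1})$ drives the first term below the remaining fraction of $\epsilon$; the right-hand side is therefore at most $\epsilon$. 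Since $\phi_0^1-\phi^*$ is finite under Assumption~\ref{lb}, Definition~\ref{stationary} shows the best iterate is an $\epsilon$-stationary point.

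For the oracle complexity, each of the $S$ outer iterations uses $N_s\le\min\{c_\epsilon\sigma^2\epsilon^{-1},n\}$ gradient samples and each of the $K=ST$ inner iterations uses $b$, so the total does not exceed $S\min\{c_\epsilon\sigma^2\epsilon^{-1},n\}+Kb$. Using $S=\lceil K/T\rceil\le K/T+1$ with $T\ge n^{1/3}$ and $b=[n^{2/3}]$, the first term is at most $Kn^{2/3}+n=\mathcal{O}(n^{2/3}\epsilon^{-1}+n)$ while $Kb=\mathcal{O}(n^{2/3}\epsilon^{-1})$, whence the total is $\mathcal{O}(n+n^{2/3}\epsilon^{-1})$, matching the known complexity of SVRG-ADMM (and strictly smaller whenever the adaptive terms $c_\tau\sigma^2\tau_s^{-1}$ are active).

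I expect the main obstacle to be the verification that $\Omega_t\ge L^2/2>0$: it requires bounding the recursively defined sequence $\{\delta_t\}$ and then checking, with explicit constants, that the four lower bounds on $\beta$, the value of $\eta$, and the coupling $\rho=1/T$, $T=[n^{1/3}]$, $b=[n^{2/3}]$ jointly dominate every negative contribution to $\Omega_t$ with a margin of at least $L^2/2$. The remaining steps---simplifying $\psi,\omega_1,\omega_2$ and performing the complexity count---are routine bookkeeping.
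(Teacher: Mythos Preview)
Your proposal is correct and follows essentially the same route as the paper: bound $\delta_{t+1}$ from the backward recursion using $\rho=1/T$ and $(1+1/T)^T\le e$, exploit $T^2/b=\mathcal{O}(1)$ under $T=[n^{1/3}]$, $b=[n^{2/3}]$, then use the choice of $\eta$ and the four lower bounds on $\beta$ to force $\Omega_t\ge L^2/2$ (so $\gamma=L^2/2$), simplify $\psi=\psi_1$ and $\max\{3,10/(\varsigma_A\beta^2)\}=3$, and plug into Theorem~\ref{AdsCR2} before counting oracle calls. One minor caution: the paper's statement of Theorem~\ref{AdsCR2} and its proof give two different expressions for $\omega_2$, and the paper's own corollary proof tacitly uses the proof's version $\omega_2=\max\{3,10/(\varsigma_A\beta^2)\}+\frac{\omega_1}{\gamma}(1+\frac{20}{\beta\varsigma_A})$ to arrive at the factor $\frac{2}{3}\epsilon$, whereas you used the statement's version; either way the second term is $\mathcal{O}(\epsilon)$ and the conclusion stands, but be aware of this when matching constants exactly.
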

		
		Under the specified parameter choices, the worst-case complexity of AbsSVRG-ADMM matches the existing optimal complexity $\mathcal{O}(n+n^{\frac{2}{3}}\epsilon^{-1})$ established for SVRG-ADMM \citep{HCL16,ZWBS24}. In practice, the batch size adaptation often significantly reduces the static complexity of SVRG-ADMM. This occurs because the adaptive term $c_\tau\sigma^2\tau_s^{-1}$ is often smaller than the static threshold $\min\{c_\epsilon\sigma^2\epsilon^{-1},n\}$, especially in the early iterations, as shown empirically in Section \ref{sec5}.

\subsection{Convergence analysis for AbsSPIDER-ADMM}
In this subsection, we analyze the convergence properties of Algorithm \ref{alg3}. Let $k\gg q$ and $n_k=\lceil  k/q \rceil$ such that $(n_k-1)q\leq k \leq n_k q-1$.

Define a merit function $\phi_k$ as follows:
	\[
	\phi_k=\mathbb{E}\left[\mathcal{L}_\beta(w_k)+\frac{5}{\beta\varsigma_A}(\frac{\zeta_{\max}^2}{\eta^2}+L^2)\|x_k-x_{k-1}\|^2+\frac{5L^2}{b\beta\varsigma_A}\sum_{i=(n_k-1)q}^{k-1} \mathbb{E}\|x_{i+1}-x_i\|^2\right].
	\]
Let $\theta_k=\mathbb{E}[\|x_{k+1}-x_k\|^2+\|x_k-x_{k-1}\|^2+\frac{1}{q}\sum_{i=(n_k-1)q}^{k}\|x_{i+1}-x_i\|^2]$. Based on the above lemma, we give the convergence of Algorithm \ref{alg3}.
\begin{theorem}\label{AdsCR3}
	Suppose the sequence $\{w_k\}$ is generated from Algorithm \ref{alg3} and Assumptions \ref{nograb}-\ref{grab} hold. Let $\epsilon >0$ and $c_\tau,c_\epsilon>0$. Choose the parameters $b=q$ and $\eta,\beta,c_\tau,\tau_0>0$ such that $\chi=\frac{\zeta_{\min}}{\eta}+\frac{\beta\varsigma_A}{2}-\frac{L+1}{2}-\frac{10\zeta_{\max}^2}{\beta \varsigma_A\eta^2}-\frac{5L^2}{\beta \varsigma_A}-\frac{5L^2}{b\beta\varsigma_A}-\frac{qL^2}{2b}-\frac{10qL^2}{b\beta\varsigma_A}-\frac{1}{2c_\tau}-\frac{10}{c_\tau\beta\varsigma_A}>0$ and $\tau_0=c_d\epsilon$ with $1\leq c_d\leq q$. Define $\psi_1=\beta^2 \|B\|\|A\|$, $\psi_2=3(L^2+\frac{\zeta_{\max}^2}{\eta^2})$, $\psi_3=\frac{5}{\beta^2 \varsigma_A}(2L^2+\frac{\zeta_{\max}^2}{\eta^2})$. Then we have
	\[
	\min_{1\leq k \leq K}\mathbb{E}[{\rm dist}(0,\partial \mathcal{L}(w_k))^2]\leq\frac{3\omega_1(\phi_0-\phi^*)}{\chi K}+\omega_2(\frac{c_d}{c_\tau}+\frac{1}{c_\epsilon})\epsilon,
	\]
	where $\omega_1=\psi+\frac{1}{c_\tau}\max\{3,\frac{10}{\varsigma_A\beta^2}\}$, $\omega_2=\frac{3\psi}{\chi}(\frac{1}{2}+\frac{10}{\beta\varsigma_A})+\max\{\frac{10}{\beta^2 \varsigma_A},3\}$, $\psi=\max\{\psi_1,\psi_2,\psi_3\}$, and $\phi^*$ denotes a lower bound of $\phi_k$.
\end{theorem}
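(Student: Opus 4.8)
The plan is to reuse the four-step template behind Theorems \ref{AdsCR1} and \ref{AdsCR2}, adapting every step to the SARAH/SPIDER recursive estimator $v_k=\nabla f_{\mathcal I}(x_k)-\nabla f_{\mathcal I}(x_{k-1})+v_{k-1}$ and to the periodic adaptive refresh $N_k=\min\{c_\tau\sigma^2\tau_k^{-1},c_\epsilon\sigma^2\epsilon^{-1},n\}$ that fires whenever $\mathrm{mod}(k,q)=0$. The merit function $\phi_k$ in the statement augments $\mathcal L_\beta(w_k)$ by a one-step term in $\|x_k-x_{k-1}\|^2$ and by the running sum $\frac{5L^2}{b\beta\varsigma_A}\sum_{i=(n_k-1)q}^{k-1}\mathbb E\|x_{i+1}-x_i\|^2$, which is the device that will soak up the variance the recursive estimator accumulates over an epoch. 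I would first establish the estimator bound (the analogue of Lemma \ref{SM2SG}): unrolling the recursion from the last refresh $(n_k-1)q$ up to $k$, taking expectations and using Assumptions \ref{nograb} and \ref{Lsm},
\[
\mathbb E\|v_k-\nabla f(x_k)\|^2\le\frac{L^2}{b}\sum_{i=(n_k-1)q}^{k-1}\mathbb E\|x_{i+1}-x_i\|^2+\sigma^2\frac{I_{(N_{(n_k-1)q}<n)}}{N_{(n_k-1)q}}.
\]
The ingredient specific to the adaptive scheme is to render the refresh term telescopable: since the active constraint defining $N_{(n_k-1)q}$ is one of $c_\tau\sigma^2\tau_{(n_k-1)q}^{-1}$, $c_\epsilon\sigma^2\epsilon^{-1}$ or $n$ (the last case killed by the indicator), and since $\tau_{(n_k-1)q}=\frac1q\sum_i\|x_{i+1}-x_i\|^2$ is the average of the previous epoch's step-lengths when $n_k\ge2$ while $\tau_0=c_d\epsilon$ for the first epoch, one gets $\sigma^2 I_{(N_{(n_k-1)q}<n)}/N_{(n_k-1)q}\le\tau_{(n_k-1)q}/c_\tau+\epsilon/c_\epsilon$.

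I would then reproduce the dual estimate (the analogues of Lemmas \ref{SM1bdual} and \ref{SM2bdual}): the optimality condition of the $x$-subproblem reads $v_k+\frac{G}{\eta}(x_{k+1}-x_k)-A^\top\lambda_k+\beta A^\top(Ax_{k+1}+By_{k+1}-c)=0$, and combining it with the $\lambda$-update and the same identity at step $k-1$ gives $A^\top(\lambda_{k+1}-\lambda_k)=(v_k-v_{k-1})+\frac{G}{\eta}(x_{k+1}-x_k)-\frac{G}{\eta}(x_k-x_{k-1})$; using Assumption \ref{fucor} to pass from $\|A^\top(\lambda_{k+1}-\lambda_k)\|$ to $\varsigma_A^{1/2}\|\lambda_{k+1}-\lambda_k\|$, then the split $v_k-v_{k-1}=(v_k-\nabla f(x_k))+(\nabla f(x_k)-\nabla f(x_{k-1}))+(\nabla f(x_{k-1})-v_{k-1})$, Assumption \ref{Lsm} and Young's inequality, one obtains a bound of the same shape as in the sketch of Theorem \ref{AdsCR2}, with coefficients $\frac5{\varsigma_A}$ on the two variance terms, $\frac{5\zeta_{\max}^2}{\varsigma_A\eta^2}$ on $\|x_{k+1}-x_k\|^2$ and $\frac5{\varsigma_A}(\frac{\zeta_{\max}^2}{\eta^2}+L^2)$ on $\|x_k-x_{k-1}\|^2$. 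Next comes the one-step decrease of $\phi_k$ (the analogue of Lemmas \ref{SM1MFD} and \ref{SM2MFD}): split $\mathcal L_\beta(w_{k+1})-\mathcal L_\beta(w_k)$ into its $\lambda$-, $x$- and $y$-parts; bound the $\lambda$-part by $\frac1\beta\|\lambda_{k+1}-\lambda_k\|^2$, the $y$-part by $0$ (exact minimization of $\mathcal L_\beta(x_k,\cdot,\lambda_k)$), and the $x$-part via the quadratic structure of the $x$-subproblem plus a Young split of the bias $\langle\nabla f(x_k)-v_k,x_{k+1}-x_k\rangle$, which yields $-(\frac{\zeta_{\min}}{\eta}+\frac{\beta\varsigma_A}{2}-\frac{L+1}{2})\|x_{k+1}-x_k\|^2+\frac12\|v_k-\nabla f(x_k)\|^2$. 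Substituting the dual bound, then the estimator bound, and finally adding the $\phi_{k+1}-\phi_k$ correction terms, the $\|x_k-x_{k-1}\|^2$ produced by the dual estimate cancels against the one-step correction of $-\phi_k$, and the running-sum corrections cancel between $\phi_k$ and $\phi_{k+1}$ except for the one new step $\|x_{k+1}-x_k\|^2$; what survives is a clean coefficient on $\|x_{k+1}-x_k\|^2$ (this is where $\frac{10\zeta_{\max}^2}{\beta\varsigma_A\eta^2}$, $\frac{5L^2}{\beta\varsigma_A}$ and $\frac{5L^2}{b\beta\varsigma_A}$ enter $\chi$), together with the residual $(\frac12+\frac5{\beta\varsigma_A})\frac{L^2}{b}\sum_{i=(n_k-1)q}^{k-1}\mathbb E\|x_{i+1}-x_i\|^2+\frac5{\beta\varsigma_A}\frac{L^2}{b}\sum_{i=(n_{k-1}-1)q}^{k-2}\mathbb E\|x_{i+1}-x_i\|^2$ from the two variance terms, plus the refresh term $(\frac12+\frac{10}{\beta\varsigma_A})(\tau_{(n_k-1)q}/c_\tau+\epsilon/c_\epsilon)$.

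Finally I would telescope over $k=0,\dots,K-1$, using $\phi_k\ge\phi^*$ (lower boundedness follows from Assumptions \ref{fucor}--\ref{grab} by the usual nonconvex-ADMM argument, since $A^\top\lambda_k=v_k+\frac{G}{\eta}(x_{k+1}-x_k)$ keeps the multiplier bounded through $\mu$). The residual sums must then be regrouped by epoch: inside $[(n-1)q,nq-1]$ each $\|x_{i+1}-x_i\|^2$ appears in at most $q$ of the inner sums, so the residual over that epoch is $\le(\frac{qL^2}{2b}+\frac{10qL^2}{b\beta\varsigma_A})\sum_{i\in\text{epoch}}\mathbb E\|x_{i+1}-x_i\|^2$; likewise the refresh term of epoch $n$ is shared by its $q$ iterations, so $\sum_k\tau_{(n_k-1)q}=q\sum_n\tau_{(n-1)q}=\sum_n\sum_{i\in\text{epoch }n-1}\|x_{i+1}-x_i\|^2$, contributing $(\frac12+\frac{10}{\beta\varsigma_A})\frac1{c_\tau}\sum_i\mathbb E\|x_{i+1}-x_i\|^2$ plus $(\frac12+\frac{10}{\beta\varsigma_A})\frac{K\epsilon}{c_\epsilon}$ and the first-epoch contribution $\le(\frac12+\frac{10}{\beta\varsigma_A})\frac{Kc_d\epsilon}{c_\tau}$ (using $q\le K$). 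Moving all the $\sum_i\|x_{i+1}-x_i\|^2$ residuals back to the left absorbs exactly the slack $\frac{qL^2}{2b}+\frac{10qL^2}{b\beta\varsigma_A}+\frac1{2c_\tau}+\frac{10}{c_\tau\beta\varsigma_A}$ built into $\chi$, which is precisely why the choice $b=q$ is imposed. The outcome is $\frac1K\sum_k\mathbb E\|x_{k+1}-x_k\|^2\le\frac{\phi_0-\phi^*}{\chi K}+\frac1\chi(\frac12+\frac{10}{\beta\varsigma_A})(\frac{c_d}{c_\tau}+\frac1{c_\epsilon})\epsilon$, and the other two pieces of $\theta_k$ are controlled by re-indexing ($\|x_k-x_{k-1}\|^2$ by a shift, $\frac1q\sum_{i=(n_k-1)q}^k\|x_{i+1}-x_i\|^2$ by the same ``at most $q$ appearances'' count), giving $\frac1K\sum_k\theta_k\le\frac{3(\phi_0-\phi^*)}{\chi K}+\frac3\chi(\frac12+\frac{10}{\beta\varsigma_A})(\frac{c_d}{c_\tau}+\frac1{c_\epsilon})\epsilon$. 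I close by bounding ${\rm dist}(0,\partial\mathcal L(w_k))^2$ by $\psi\,\theta_k$ plus one leftover refresh-variance term: the three components of $\partial\mathcal L(w_k)$ — the constraint residual $\|Ax_k+By_k-c\|^2=\frac1{\beta^2}\|\lambda_k-\lambda_{k-1}\|^2$ (whence $\psi_1=\beta^2\|B\|\|A\|$), the $x$-component $\|\nabla f(x_k)-A^\top\lambda_k\|^2$ ($\psi_2=3(L^2+\frac{\zeta_{\max}^2}{\eta^2})$, via $A^\top\lambda_k=v_k+\frac{G}{\eta}(x_k-x_{k-1})$ and Assumptions \ref{Lsm}, \ref{grab}), and the $y$-component ${\rm dist}(0,\partial g(y_k)-B^\top\lambda_k)^2$ ($\psi_3=\frac{5}{\beta^2\varsigma_A}(2L^2+\frac{\zeta_{\max}^2}{\eta^2})$, via the $y$-optimality condition) — with the leftover refresh terms accounting for the extra $\frac1{c_\tau}\max\{3,\frac{10}{\varsigma_A\beta^2}\}$ in $\omega_1$ and $\max\{\frac{10}{\beta^2\varsigma_A},3\}$ in $\omega_2$. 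Passing to $\min_{1\le k\le K}$ and collecting constants yields the stated inequality.

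The step I expect to be the genuine obstacle is the Step-3/Step-4 bookkeeping at epoch boundaries together with the simultaneous balancing of the three coupled devices: the $\frac1q$-averaging inside $\tau_k$, the $\frac1b$ factor in the SPIDER variance (which forces $b=q$), and the ``$q$ copies per epoch'' multiplicity that appears both when a refresh variance is reused across its epoch and when a running sum is telescoped. One must verify the one-step inequality and the running-sum telescoping separately at the indices $k$ with $\mathrm{mod}(k,q)=0$ (where the inner sum is empty, $v_k$ is the full-batch adaptive estimator, and $v_{k-1}$ still belongs to the previous epoch), reconcile the reset $\tau_{k+1}=0$ with the ``average over the epoch'' reading used in the first step, and — the crux — check that after the epochwise regrouping the accumulated variance is \emph{exactly} absorbable by the negative terms engineered into $\chi$, with no residual that grows with $K$. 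Everything else (the ALF split, the Young inequalities, the dual inversion, the constant chase into $\psi_i,\omega_i$) is routine though lengthy.
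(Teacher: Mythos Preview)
Your proposal is essentially correct and follows the same four-step template as the paper's proof: the SPIDER variance bound (Lemma \ref{BEV3}), the dual-increment bound (Lemma \ref{SM3bdual}), the merit-function telescoping over epochs (Lemma \ref{SM3MFD}), and the final assembly via the three stationarity components. Your identification of the epoch-boundary bookkeeping and the role of $b=q$ in absorbing the accumulated variance into $\chi$ is exactly right.

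There is one concrete slip in your attribution of the $\psi_i$. You write that $\psi_1=\beta^2\|B\|\|A\|$ arises from the constraint residual $\|Ax_k+By_k-c\|^2=\frac{1}{\beta^2}\|\lambda_k-\lambda_{k-1}\|^2$, and that $\psi_3=\frac{5}{\beta^2\varsigma_A}(2L^2+\frac{\zeta_{\max}^2}{\eta^2})$ arises from the $y$-component. These are swapped: the $y$-optimality condition gives $B^\top\lambda_k-\beta B^\top(Ax_k+By_{k+1}-c)\in\partial g(y_{k+1})$, whence ${\rm dist}(B^\top\lambda_{k+1},\partial g(y_{k+1}))^2\le\|\beta B^\top A(x_{k+1}-x_k)\|^2\le\beta^2\|B\|\|A\|\,\theta_k$, yielding $\psi_1$; conversely the constraint residual is $\frac{1}{\beta^2}\|\lambda_{k+1}-\lambda_k\|^2$, which after Lemma \ref{SM3bdual} and Lemma \ref{BEV3} produces the $\frac{5}{\beta^2\varsigma_A}(\cdots)$ shape of $\psi_3$. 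Since the bound uses $\psi=\max\{\psi_1,\psi_2,\psi_3\}$ this mislabeling does not affect the conclusion, but you should correct it. Also, Assumption \ref{grab} is not needed for the $\psi_2$ bound on the $x$-component; it is used only in the lower-boundedness argument for $\phi_k$.
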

\begin{proof}
	The proof is similar to that of Theorem \ref{AdsCR2}. We sketch it here, and refer to \ref{THE3} for details.

    Step 1. We establish the upper bound for the estimation variance $\mathbb{E}\|v_t^s-\nabla f(x_t^s)\|^2$ (see also Lemma \ref{BEV3}).
    \[\mathbb{E}\|v_k-\nabla f(x_k)\|^2\leq \frac{L^2}{b}\sum_{i=(n_k-1)q}^{k-1} \mathbb{E}\|x_{i+1}-x_i\|^2+\frac{I_{(N_k<n)}}{N_k}\sigma^2.\]
	
	Step 2. By the optimality condition of each subproblem in Algorithm \ref{alg3}, we have the upper bound of $\|\lambda_{k+1}-\lambda_k\|^2$ (see also Lemma \ref{SM3bdual}),
	\[
	\begin{aligned}
			\|\lambda_{k+1}-\lambda_k\|^2&\leq \frac{5}{\varsigma_A}\|v_k-\nabla f(x_k)\|^2+\frac{5}{\varsigma_A}\|\nabla f(x_{k-1})-v_{k-1}\|^2\\
			&\quad+\frac{5\zeta_{\max}^2}{\varsigma_A\eta^2}\|x_{k+1}-x_k\|^2+\frac{5}{\varsigma_A}(\frac{\zeta_{\max}^2}{\eta^2}+L^2)\|x_k-x_{k-1}\|^2,
		\end{aligned}
	\]
	
	Step 3. By the upper bound of $\mathbb{E}\|\lambda_{k+1}-\lambda_k\|^2$, we get (see Lemma \ref{SM3MFD}),
	\[\frac{\chi}{K}\sum_{i=(n_k-1)q}^{k}\mathbb{E}\|x_{i+1}-x_i\|^2\leq\frac{\phi_0-\phi^*}{ K}+(\frac{1}{2}+\frac{10}{\beta\varsigma_A})(\frac{c_d}{c_\tau}+\frac{1}{c_\epsilon})\epsilon.\]
	
	Step 4. Combining the above inequality and the definition of $\epsilon$-stationary, we obtain the claimed result.
\end{proof}

The following corollary provides the $\mathcal{O}(\frac{1}{K})$ convergence rate and the oracle complexity of Algorithm \ref{alg3} under certain choices of parameters. The proof of the corollary is included in \ref{coro2}.
\begin{corollary}\label{AbsspiderOC}
	Under the setting of Theorem \ref{AdsCR3}, choose $b=q=\sqrt{n}$, $1\leq c_d\leq q$, $0<\eta<\frac{2\zeta_{\min}}{L^2+L+4}$,  $c_\tau=c_\epsilon=\frac{9(1+c_d)}{2}(4+\beta^2 \|B\|\|A\|)$ and
	\[
	\beta\geq\max\left\{\frac{20}{\varsigma_A},\sqrt{\frac{10}{3\varsigma_A}},\sqrt{\frac{3}{\|B\|\|A\|}(L^2+\frac{\zeta_{\max}^2}{\eta^2})},\frac{1}{\varsigma_A}\sqrt{20(1+2L^2+\frac{\zeta_{\max}^2}{\eta^2})}\right\}.
	\]
	Then, to find an $\epsilon$-approximate solution, the total number of iterations required by Algorithm \ref{alg3} is given by \[K=\frac{9(3+\beta^2\|B\|\|A\|)(\phi_0-\phi^*)}{\chi \epsilon}=\mathcal{O}(\epsilon^{-1}),\]
	and	the oracle complexity of Algorithm \ref{alg3} is given by
	\[\begin{aligned}
		&\underset{\rm complexity~of~AbsSPIDER-ADMM}{\underbrace{\sum_{k=0}^{K-1}\min\{\frac{c_\tau\sigma^2}{\frac{1}{q}\sum_{i=(n_k-1)q}^{k}\|x_{i+1}-x_i\|^2},c_\epsilon\sigma^2\epsilon^{-1},n\}+Kb}}\\
		\leq&\underset{\rm complexity~of~SPIDER-ADMM}
		{\underbrace{K\min\{c_\epsilon\sigma^2\epsilon^{-1},n\}+Kb=\mathcal{O}(n+n^{\frac{1}{2}}\epsilon^{-1})}}.
	\end{aligned}
	\]
\end{corollary}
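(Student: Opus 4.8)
The plan is to derive Corollary~\ref{AbsspiderOC} directly from Theorem~\ref{AdsCR3}, in three stages: verify that the listed parameters meet the theorem's hypotheses, collapse its constants under those choices to read off $K$, and then tally gradient evaluations. First I would dispose of the structural hypotheses. The conditions $b=q$ and $\tau_0=c_d\epsilon$ with $1\le c_d\le q$ are imposed directly (with $b=q=\sqrt n$), so the only substantive condition to check is $\chi>0$. Substituting $b=q$ merges the four $b,q$-dependent terms of $\chi$, and after using $q\ge1$ and $c_\tau\ge36$ I would rewrite it as
\[
\chi\ \ge\ \Big(\tfrac{\zeta_{\min}}{\eta}-\tfrac{L+1}{2}-\tfrac{L^2}{2}-\tfrac{1}{2c_\tau}\Big)+\Big(\tfrac{\beta\varsigma_A}{2}-\tfrac{10\zeta_{\max}^2}{\beta\varsigma_A\eta^2}-\tfrac{20L^2}{\beta\varsigma_A}-\tfrac{10}{\beta\varsigma_A}\Big).
\]
The ``master'' bound $\beta\ge\varsigma_A^{-1}\sqrt{20(1+2L^2+\zeta_{\max}^2/\eta^2)}$ is exactly what lets $\tfrac{\beta\varsigma_A}{2}$ absorb the whole second bracket, and $\eta<2\zeta_{\min}/(L^2+L+4)$ gives $\tfrac{\zeta_{\min}}{\eta}>\tfrac{L^2+L+4}{2}$, so the first bracket exceeds $\tfrac32-\tfrac1{2c_\tau}>1$; hence $\chi>1$, and Theorem~\ref{AdsCR3} applies.

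Next I would simplify the constants $\psi,\omega_1,\omega_2$ using the remaining $\beta$ bounds. The bounds $\beta\ge20/\varsigma_A$ and $\beta\ge\sqrt{10/(3\varsigma_A)}$ yield $\tfrac12+\tfrac{10}{\beta\varsigma_A}\le1$ and $\max\{10/(\varsigma_A\beta^2),3\}=3$; the second of these also forces $\psi_3\le\psi_2$ (via $2L^2+\zeta_{\max}^2/\eta^2\le2(L^2+\zeta_{\max}^2/\eta^2)$ and $\beta^2\varsigma_A\ge10/3$), while $\beta\ge\sqrt{3(L^2+\zeta_{\max}^2/\eta^2)/(\|B\|\|A\|)}$ gives $\psi_2\le\psi_1$, so $\psi=\psi_1=\beta^2\|B\|\|A\|$, $\omega_1=\psi_1+3/c_\tau\le3+\psi_1$, and $\omega_2\le 3\psi_1/\chi+3$. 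I would then split the right-hand side of Theorem~\ref{AdsCR3}: taking $K=9(3+\beta^2\|B\|\|A\|)(\phi_0-\phi^*)/(\chi\epsilon)=\mathcal{O}(\epsilon^{-1})$ makes the first term equal to $\tfrac{\omega_1}{3(3+\psi_1)}\epsilon\le\tfrac\epsilon3$; the choice $c_\tau=c_\epsilon=\tfrac{9(1+c_d)}{2}(4+\beta^2\|B\|\|A\|)$ makes $\tfrac{c_d}{c_\tau}+\tfrac1{c_\epsilon}=\tfrac{2}{9(4+\psi_1)}$, so the second term is $\tfrac{2\omega_2}{9(4+\psi_1)}\epsilon\le\tfrac{2(3\psi_1+3)}{9(4+\psi_1)}\epsilon\le\tfrac{2\epsilon}{3}$ (using $\chi>1$). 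Adding the two pieces gives $\min_{1\le k\le K}\mathbb{E}[{\rm dist}(0,\partial\mathcal{L}(w_k))^2]\le\epsilon$, so $w_{k^*}$ with $k^*=\arg\min_k\theta_k$ is an $\epsilon$-stationary point.

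For the oracle complexity I would tally gradient queries per iteration. A full batch is drawn only when $\mathrm{mod}(k,q)=0$, i.e.\ at $\lceil K/q\rceil$ iterations, each costing $N_k=\min\{c_\tau\sigma^2\tau_k^{-1},c_\epsilon\sigma^2\epsilon^{-1},n\}\le\min\{c_\epsilon\sigma^2\epsilon^{-1},n\}$, while each of the remaining $\mathcal{O}(K)$ iterations evaluates $\nabla f_{\mathcal{I}}(x_k)-\nabla f_{\mathcal{I}}(x_{k-1})$ at cost $\mathcal{O}(b)$. Relaxing $N_k$ to the static threshold bounds the total by $\lceil K/q\rceil\min\{c_\epsilon\sigma^2\epsilon^{-1},n\}+\mathcal{O}(Kb)$; plugging in $q=b=\sqrt n$ and $K=\mathcal{O}(\epsilon^{-1})$, the full-batch part is $\mathcal{O}(\epsilon^{-1}/\sqrt n)\cdot\mathcal{O}(\min\{\epsilon^{-1},n\})=\mathcal{O}(n+\sqrt n\,\epsilon^{-1})$ (distinguishing the regimes $\epsilon^{-1}\ge\sqrt n$ and $\epsilon^{-1}<\sqrt n$) and the inner part is $\mathcal{O}(\sqrt n\,\epsilon^{-1})$, giving the claimed $\mathcal{O}(n+\sqrt n\,\epsilon^{-1})$, which matches SPIDER-ADMM.

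The hard part will not be any individual estimate but the interlocking parameter bookkeeping in the first two stages: each of the four lower bounds on $\beta$, together with the cap on $\eta$ and the prescribed size of $c_\tau=c_\epsilon$, is tuned to neutralize exactly one negative term of $\chi$ or to enforce exactly one of $\psi_2\le\psi_1$, $\psi_3\le\psi_2$, $\max\{10/(\varsigma_A\beta^2),3\}=3$, so the main risk is mis-pairing a hypothesis with the term it is meant to kill; and one must keep every multiplicative constant tight enough that the two error pieces still sum to at most $\epsilon$, which is precisely why $c_\tau$ is scaled proportionally to $4+\beta^2\|B\|\|A\|$ rather than to a fixed constant. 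Once that accounting is settled, the complexity count in the last stage is routine.
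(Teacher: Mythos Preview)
Your proposal is correct and follows essentially the same route as the paper's proof: verify $\chi>1$ by pairing the four lower bounds on $\beta$ and the cap on $\eta$ with the corresponding negative terms, collapse $\psi=\psi_1$ and $\max\{10/(\varsigma_A\beta^2),3\}=3$ to simplify $\omega_1,\omega_2$, and then split the bound from Theorem~\ref{AdsCR3} into $\tfrac13\epsilon+\tfrac23\epsilon$ via the prescribed $K$ and $c_\tau=c_\epsilon$. Your oracle-complexity count---summing $N_k$ only over the $\lceil K/q\rceil$ iterations where a large batch is actually drawn---is in fact more careful than the paper's displayed formula, which sums over all $K$ but still lands on the same $\mathcal{O}(n+\sqrt{n}\,\epsilon^{-1})$ order.
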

This result shows that the worst-case oracle complexity of AbsSPIDER-ADMM is $\mathcal{O}(n+n^{\frac{1}{2}}\epsilon^{-1})$, which matches the best known order for SPIDER-ADMM \citep{HCH19}. Similarly to the argument at the end of Corollary \ref{AbssvrgOC}, our practical complexity can be much better than that of SPIDER-ADMM due to the batch size adaptation.

\section{Experiments}\label{sec5}
This section empirically evaluates SADMM with batch size adaptation on several tasks. All experiments were performed in MATLAB R2016b on a 64-bit laptop equipped with Intel i9-13900HX CPU and 32.0 GB RAM.
		
		We compare proposed AbsSADMM, AbsSVRG-ADMM and AbsSPIDER-ADMM with the corresponding vanilla methods: SADMM \citep{HC18}, SVRG-ADMM \citep{HCL16,ZK16}, and SPIDER-ADMM \citep{HCH19}.
		We consider four publicly available datasets\footnote{\href{https://www.csie.ntu.edu.tw/~cjlin/libsvmtools/datasets/}
			{https://www.csie.ntu.edu.tw/$\sim$cjlin/libsvmtools/datasets/}.} as described in Table \ref{sample-table}. For each dataset, we take half of the samples as training data and the rest as test data. The same initialization $x_0=0$ is used for all methods in two experiments.
		
		\begin{table}[h]
			\caption{Summary of datasets and regularization parameters used for Problem \eqref{FLR}.}\label{sample-table}
			\centering
			\begin{tabular}{cccc}
				\toprule
				Datasets & Samples & Features & $\ell$\\
				\midrule
				phishing & 11055 & 68 & $5.5 \times 10^{-3}$\\
				a9a & 32,561 & 123 & $3 \times 10^{-6}$ \\
				w8a & 49749 & 300 & $10^{-4}$\\
				ijcnn1 & 49990 & 22 &  $5 \times 10^{-5}$ \\
				\bottomrule
			\end{tabular}
		\end{table}

\subsection{Fused logistic regression}
		Given a set of training samples $\{(a_i,b_i)\}_{i=1}^n$ with $a_i\in\mathbb{R}^m$ and $b_i\in\{-1,1\}$, the fused logistic regression \citep{LQZFZ18} aims to solve the following problem:
		\begin{equation}\label{OFLR}
			\min_{x} \frac{1}{n}\sum_{i=1}^{n}f_i(x)+\ell\|Lx\|_1,
		\end{equation}
		where $f_i(x)=\log(1+\exp(-b_i a_i^\top x))$ is the logistic loss and the regularization parameters $\ell>0$. $L$ is a matrix with all ones on the diagonal, negative ones on the super-diagonal and zeros elsewhere. Problem \eqref{OFLR} can be expressed as \eqref{p} by adding an auxiliary variable $y = Lx$,
		\begin{equation}\label{FLR}
			\min_{x,y} \frac{1}{n}\sum_{i=1}^{n}f_i(x)+\ell\|y\|_1,~Lx-y=0.
		\end{equation}
		
		\begin{table}[htbp]
			\caption{The parameter settings used for \eqref{FLR}. All methods take $\tau_0=100$ and $T=5$. }\label{parameter-table}
			\centering
			\begin{tabular}{ccccc}
				\toprule
				Datasets & Methods & $\beta$ & $\eta$ & $c_\tau$ \\
				\midrule
				phishing & AbsSADMM  & 100 & 0.8  & 1 \\ 
				&AbsSVRG-ADMM & 100 & 1  & 1 \\
				&AbsSPIDER-ADMM & 100 & 0.3  & 1 \\
				ijcnn1& AbsSADMM  & 10 & 0.25  & 3 \\ 
				&AbsSVRG-ADMM & 15 & 0.2 & 5 \\
				&AbsSPIDER-ADMM & 5 & 0.4  & 6\\
				w8a& AbsSADMM  & 2 & 0.75  & 10\\ 
				&AbsSVRG-ADMM & 2 & 0.25  & 1 \\
				&AbsSPIDER-ADMM & 2 & 0.5  & 1\\
				a9a& AbsSADMM  & 10 & 0.1  & 1 \\ 
				&AbsSVRG-ADMM & 12 & 0.5  & 2\\
				&AbsSPIDER-ADMM & 8 & 0.9  & 150\\
				\bottomrule
			\end{tabular}
		\end{table}
		
		The parameter settings are shown in Table \ref{parameter-table}. All parameters in the vanilla method are the same as in the corresponding adaptation. For SADMM, we fix $\epsilon=10^{-3}$ and $c_\epsilon=3,5,1,3$ to solve different datasets in turn. For SVRG/SPIDER-ADMM, the outer batch size is set to the full batch and while the inner batch  size $b$ is set to 500 for the phishing dataset and 100 for the other datasets. To reduce statistical variability, the experimental results are repeated 5 rounds. Figure \ref{FLRAbsS} shows that the adaptive batch size algorithms (AbsSADMM and AbsSVRG/SPIDER-ADMM) consistently outperform their static counterparts. In particular, the adaptive methods exhibit significantly better convergence speed and stability than the static methods in the early stages of iteration, which is consistent with our theoretical expectations. These improvements can be attributed to batch size adaptation, which enables the algorithms to adjust dynamically to varying optimization paths, resulting in more efficient updates and faster convergence.

		\begin{figure}[htbp]
			\centering
			\subfigure[phishing \label{FLR_phishing}]{
				\includegraphics[width=0.82\columnwidth]{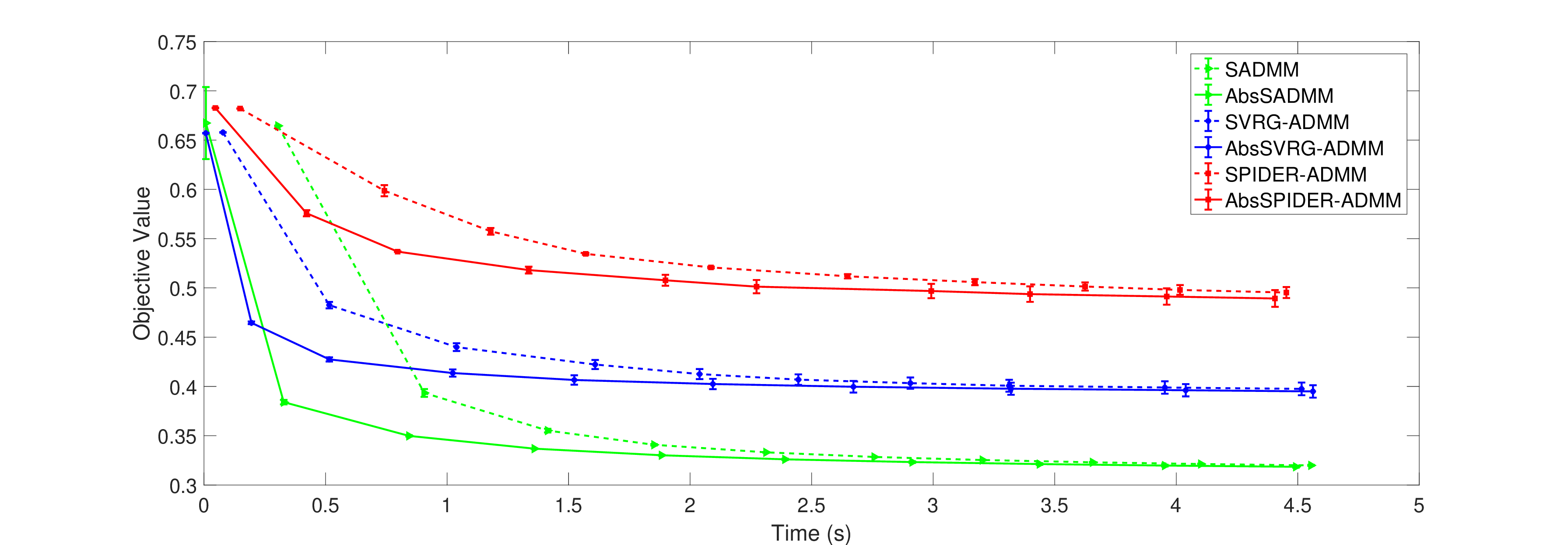}
			}
			\subfigure[ijcnn1 \label{FLR_ijcnn1}]{
				\includegraphics[width=0.82\columnwidth]{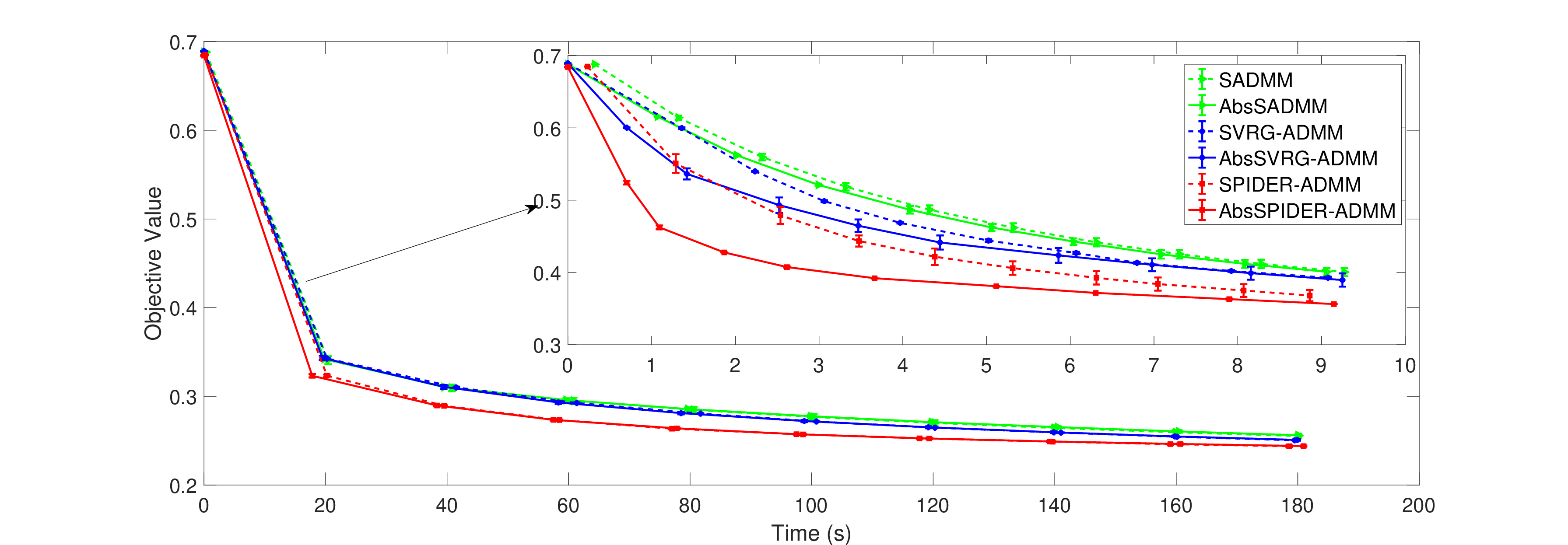}
			}
			\subfigure[w8a \label{FLR_w8a}]{
				\includegraphics[width=0.823\columnwidth]{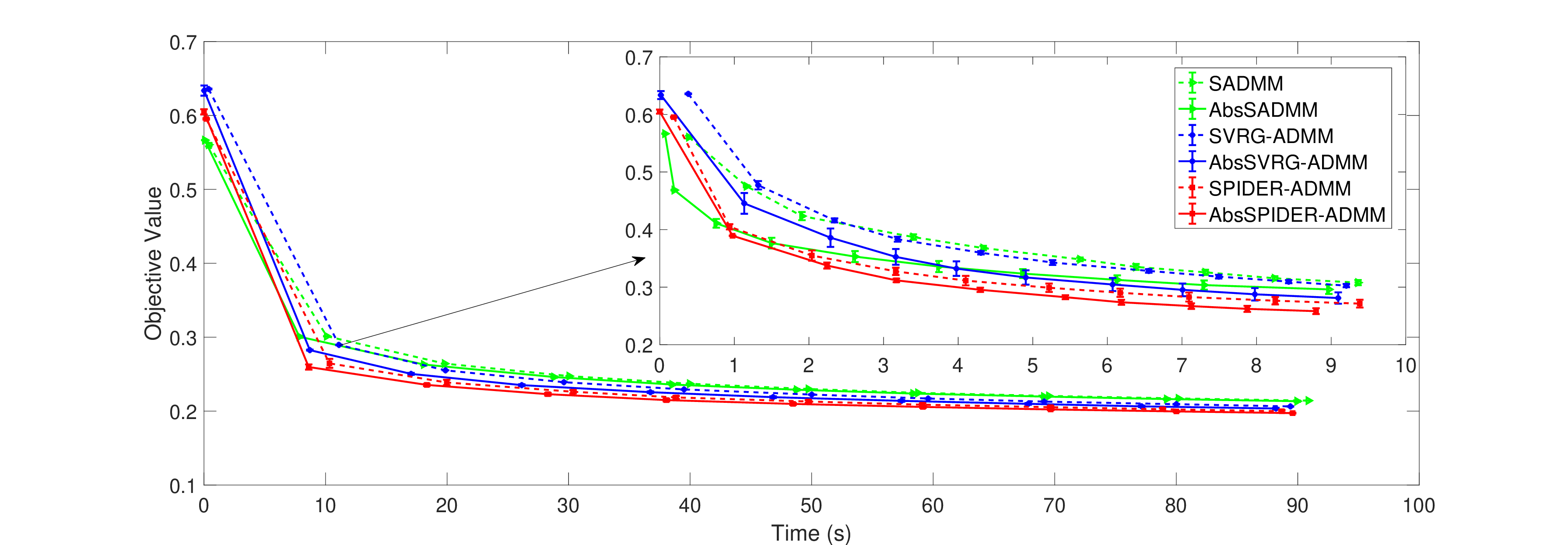}
			}
			\subfigure[a9a \label{FLR_a9a}]{
				\includegraphics[width=0.82\columnwidth]{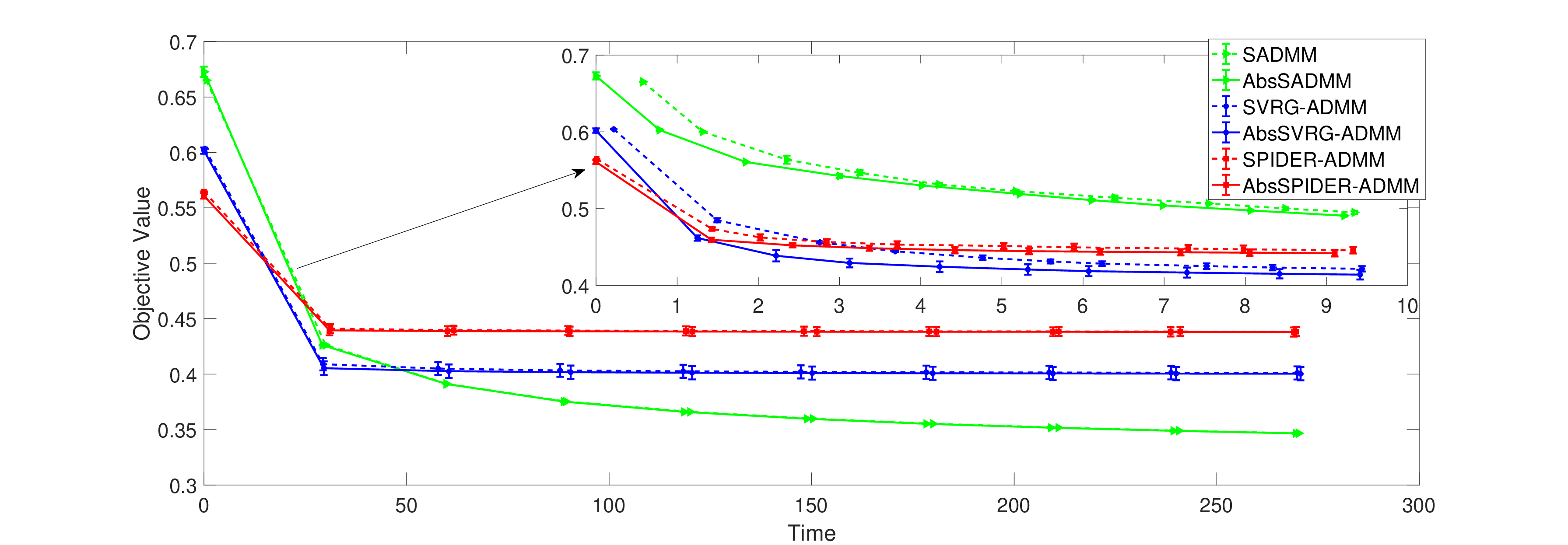}
			}
			\caption{	Comparison results of adaptive and vanilla SADMM for Problem \eqref{FLR}.}\label{FLRAbsS}
		\end{figure}
    
		\subsection{Graph-guided regularized lasso}
		Given a set of feature-label pair $\{(a_i,b_i)\}_{i=1}^n$ with $a_i\in\mathbb{R}^m$ and $b_i\in\{-1,1\}$, the graph-guided regularized lasso \citep{HCL16} aims to solve the following problem:
		\begin{equation}\label{OGGRL}
			\min_{x} \frac{1}{n}\sum_{i=1}^{n}f_i(x)+\ell_1\|Ax\|_1+\frac{\ell_2}{2}\|x\|^2,
		\end{equation}
		where $f_i(x)=\frac{1}{1+\exp(b_i a_i^\top x)}$ is the sigmoid loss and the regularization parameters $\ell_1,\ell_2>0$. $A=[G;I]$ is a matrix where $G$ is obtained by sparse inverse covariance matrix estimation \citep{FHT08,HSDR14}. Introducing the auxiliary variable $y = Ax$, \eqref{OGGRL} can be rewritten as \eqref{p},
		\begin{equation}\label{GGRL}
			\min_{x,y} \frac{1}{n}\sum_{i=1}^{n}(f_i(x)+\frac{\ell_2}{2}\|x\|^2)+\ell_1\|y\|_1,~ Ax-y=0.
		\end{equation}
		
		For (Abs)SADMM, we set $\beta=1.3$, $\eta=0.5$ and $c_\tau=1$. For (Abs)SVRG-ADMM, we set $\beta=\eta=1$, $T=8$, $b=500$, $c_\tau=1$ and $\tau_1=500$. For (Abs)SPIDER-ADMM, we set $\beta=1$, $\eta=2$, $q=5$, $b=500$, $c_\tau=1$ and $\tau_0=100$. We fix the parameter $\epsilon=10^{-4}$ and choose different $c_\epsilon$ in different cases, as indicated by the red line in Figures \ref{GGRLAbsS}-\ref{GGFLAbsSPIDER}.
		The experimental results show that our adaptive methods have a relatively faster convergence rate and lower overall complexity than non-adaptive methods.
		The batch size adaptation strategy enables these algorithms to effectively balance the trade-off between convergence rate and computational cost.

		\begin{figure}[htbp]
			\centering
			\subfigure[w8a \label{w8aAbsSGGRL}]{
				\includegraphics[width=0.475\columnwidth]{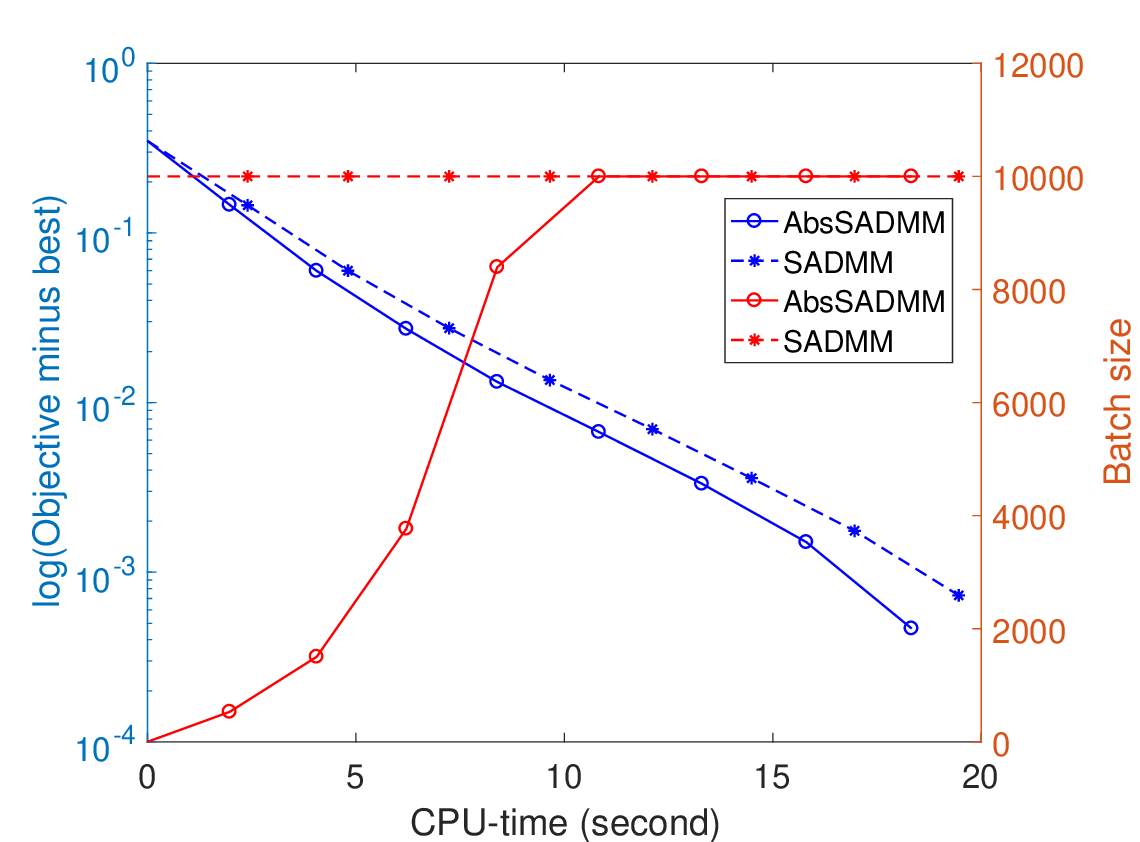}
			}
			\subfigure[ijcnn1 \label{ijcnn1AbsSGGRL}]{
				\includegraphics[width=0.475\columnwidth]{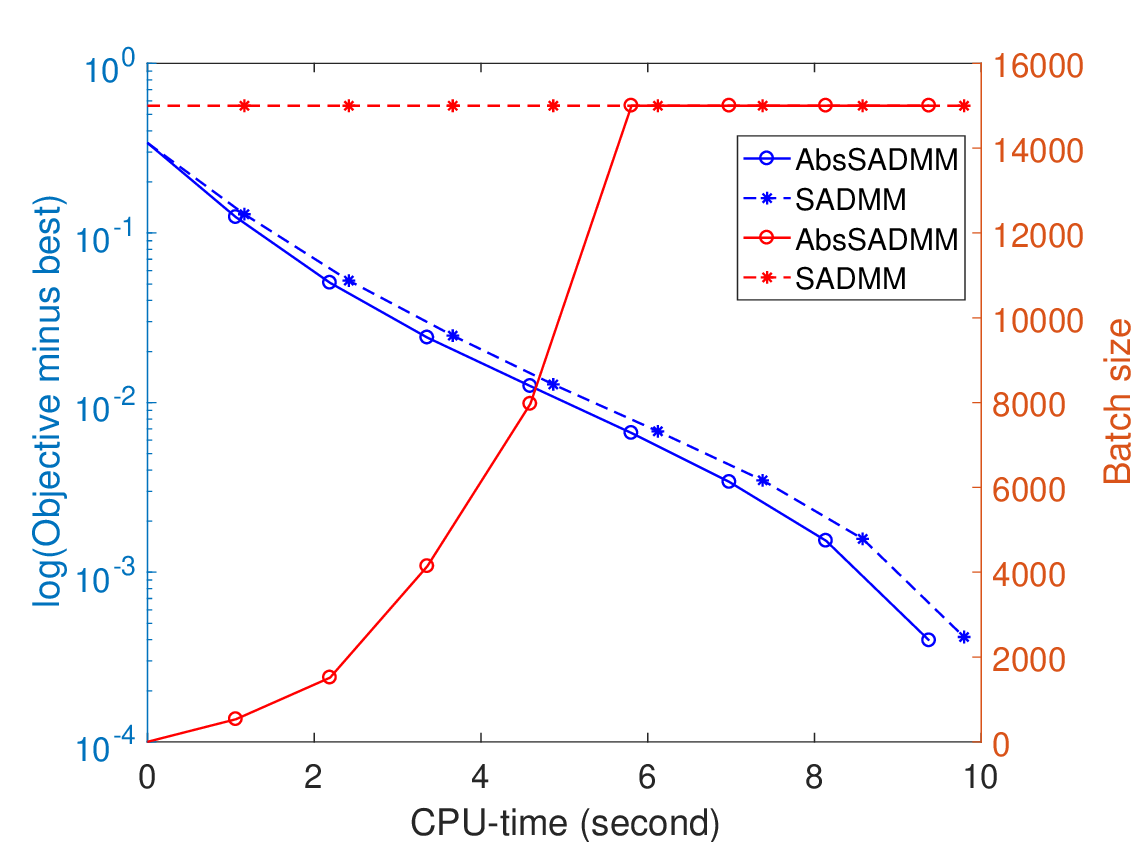}
			}
			\caption{Comparison results of (Abs)SADMM for \eqref{GGRL}. $\ell_1=\ell_2=10^{-2}$ for w8a and $\ell_1=10^{-5},\ell_2=10^{-2}$ for ijcnn1.}\label{GGRLAbsS}
		\end{figure}
		
		\begin{figure}[htbp]
			\centering
			\subfigure[a9a \label{GGFLASVRGa9a}]{
				\includegraphics[width=0.475\columnwidth]{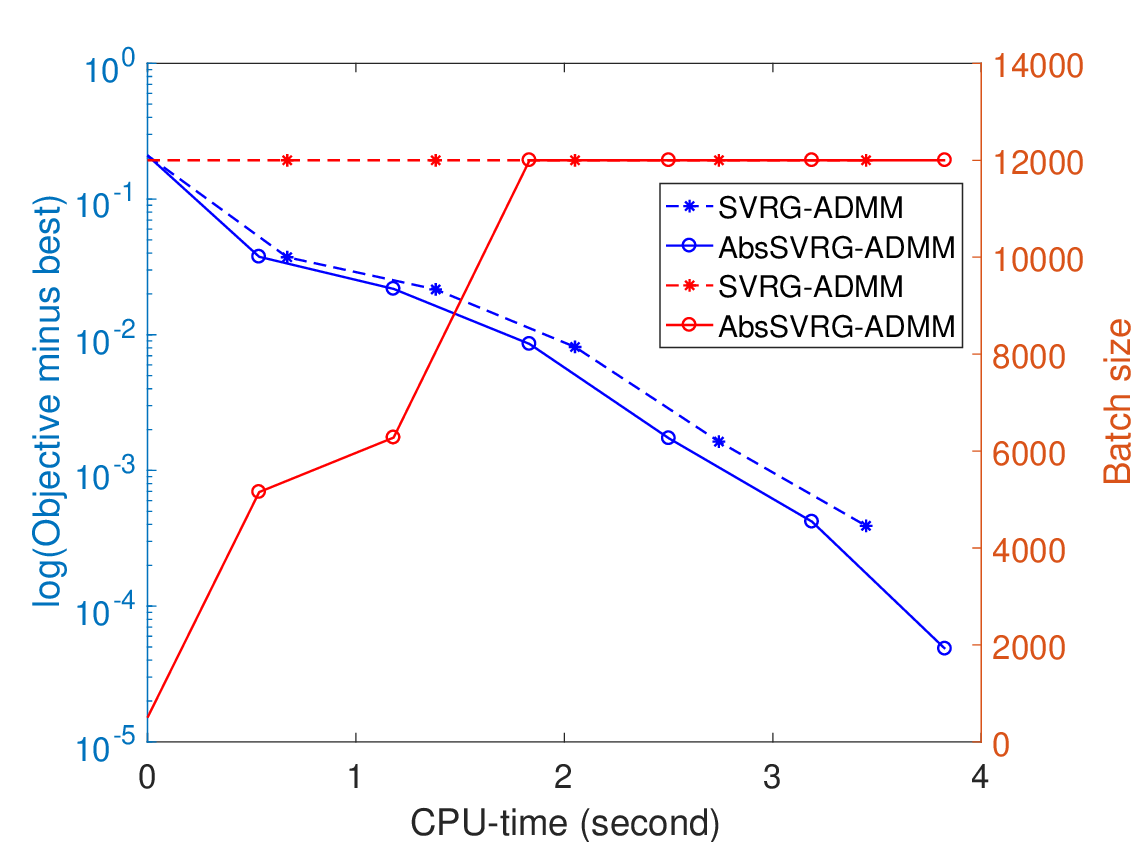}
			}
			\subfigure[ijcnn1 \label{GGFLASVRGijcnn1}]{
				\includegraphics[width=0.475\columnwidth]{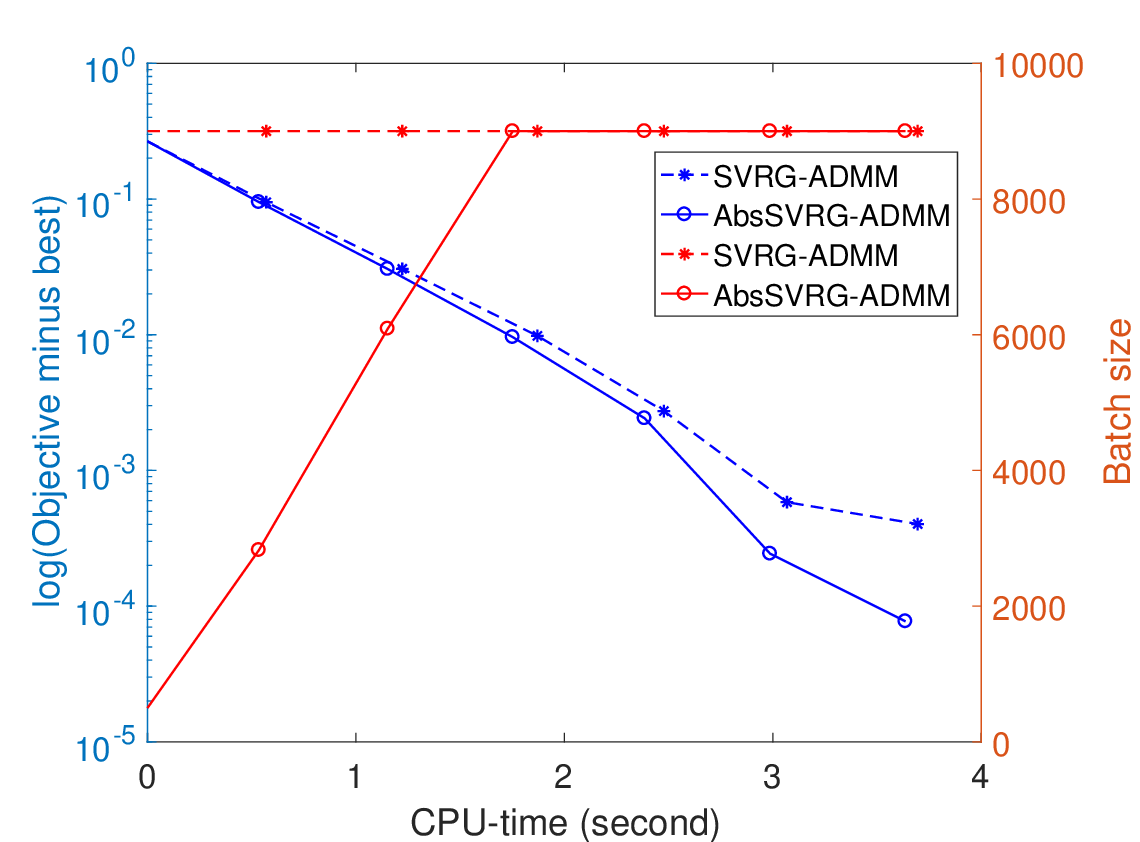}
			}
			\caption{Comparison results of (Abs)SVRG-ADMM for \eqref{GGRL}. $\ell_1=10^{-2},\ell_2=3 \times 10^{-5}$ for a9a and $\ell_1=1.5 \times 10^{-2},\ell_2=6\times 10^{-3}$ for ijcnn1.}\label{GGRLAbsSVRG}
		\end{figure}
		
		\begin{figure}[htbp]
			\centering
			\subfigure[phishing \label{GGRLASPIDERphishing}]{
				\includegraphics[width=0.475\columnwidth]{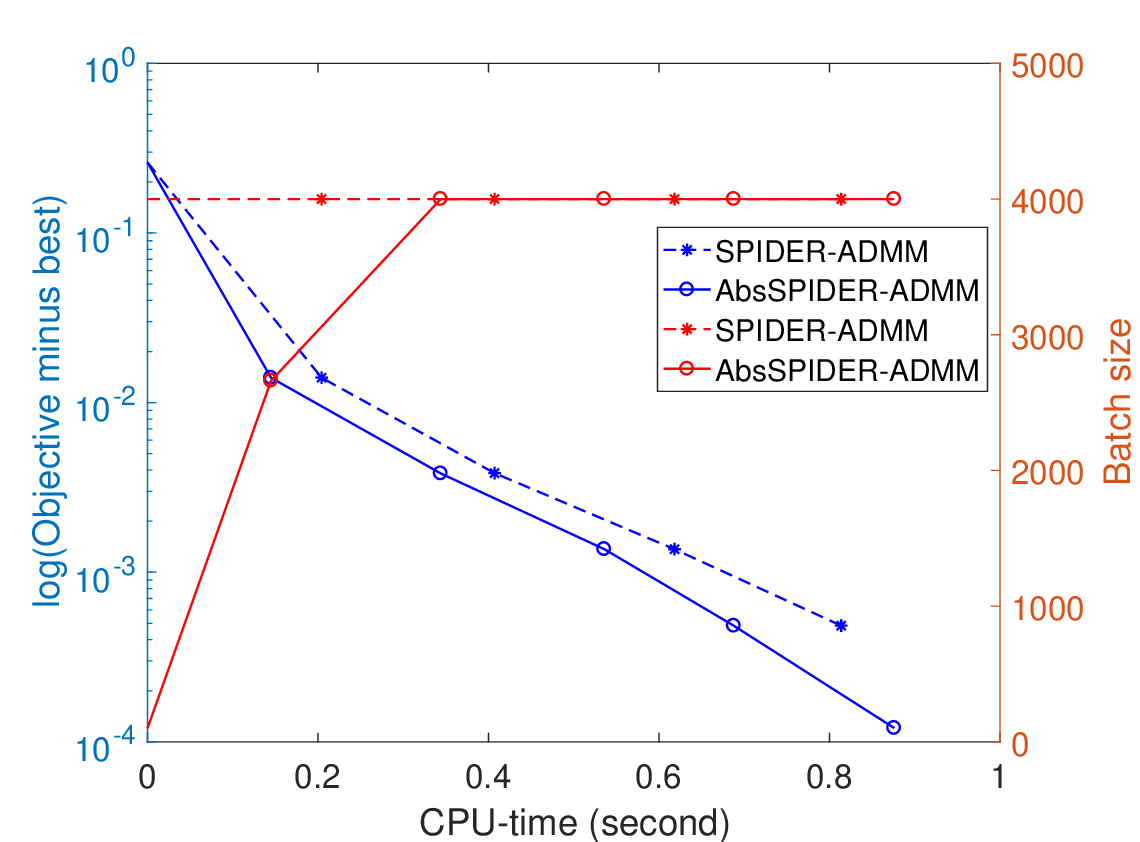}
			}
			\subfigure[w8a \label{GGRLASPIDERw8a}]{
				\includegraphics[width=0.475\columnwidth]{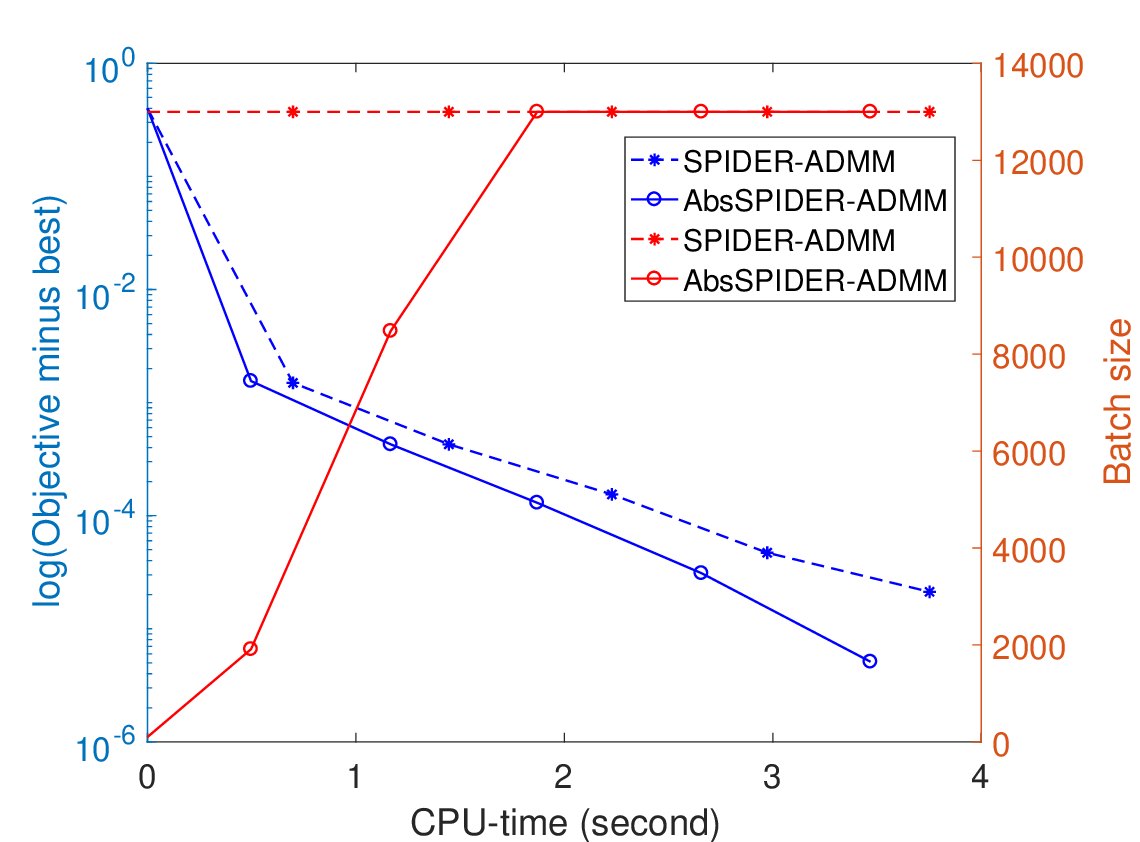}
			}
			\caption{Comparison results of (Abs)SPIDER-ADMM for \eqref{GGRL}. $\ell_1=1.1 \times 10^{-4},\ell_2=10^{-3}$ for phishing and $\ell_1=10^{-5},\ell_2=10^{-2}$ for w8a.}\label{GGFLAbsSPIDER}
		\end{figure}

	\section{Conclusions}\label{sec6}
	This work introduces a batch size adaptation scheme for stochastic ADMM and its variance-reduced extensions to solve nonconvex, nonsmooth optimization problems. The proposed framework adjusts mini-batch sizes based on differences in optimization progress over iterations. It begins with small batch sizes during the initial iterations to accelerate convergence. Then, it increases the batch size progressively as the iterations stabilize to reduce gradient variance and improve accuracy. We developed a simple convergence analysis that explicitly accounts for iteration dependencies in batch updates. Theoretical guarantees and empirical evaluations demonstrate that our adaptive methods outperform static batch algorithms in terms of computational efficiency.

    \section*{Acknowledgments}
		This work was supported by the National Natural Science Foundation of China (12401419,12471401).

\appendix
\section{Convergence Analysis for AbsSADMM}\label{app1}
\begin{lemma}[Upper-bound $\mathbb{E}\|\lambda_{k+1}-\lambda_k\|^2$]\label{SM1bdual}
			Under Assumptions \ref{nograb}-\ref{fucor} and given the sequence $\{w_k\}$ from Algorithm \ref{alg1}, it holds that
			\[\begin{aligned}
				\mathbb{E}\|\lambda_{k+1}-\lambda_k\|^2 \leq & \frac{5}{\varsigma_A}\mathbb{E}\|\nabla f_{\mathcal{I}_k}(x_k)-\nabla f(x_k)\|^2+\frac{5}{\varsigma_A}\mathbb{E}\|\nabla f(x_{k-1})-\nabla f_{\mathcal{I}_{k-1}}(x_{k-1})\|^2\\
				&+\frac{5\zeta_{\max}^2}{\varsigma_A\eta^2}\mathbb{E}\|x_{k+1}-x_k\|^2+\frac{5}{\varsigma_A}(\frac{\zeta_{\max}^2}{\eta^2}+L^2)\mathbb{E}\|x_k-x_{k-1}\|^2.
			\end{aligned}\]
		\end{lemma}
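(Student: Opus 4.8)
The plan is to read an explicit formula for $A^\top\lambda_{k+1}$ out of the optimality condition of the $x$-subproblem, difference two consecutive such identities, and then convert the resulting estimate on $\|A^\top(\lambda_{k+1}-\lambda_k)\|$ into one on $\|\lambda_{k+1}-\lambda_k\|$ via the full-column-rank assumption. Concretely, first I would differentiate the quadratic model $\tilde{\mathcal L}_\beta(\cdot,y_{k+1},\lambda_k,x_k,\nabla f_{\mathcal I_k}(x_k))$ in $x$ and evaluate at the minimizer $x_{k+1}$, obtaining
\[
\nabla f_{\mathcal I_k}(x_k)+\tfrac1\eta G(x_{k+1}-x_k)-A^\top\lambda_k+\beta A^\top(Ax_{k+1}+By_{k+1}-c)=0 .
\]
Since the dual update gives $\beta(Ax_{k+1}+By_{k+1}-c)=\lambda_k-\lambda_{k+1}$, the two terms involving $A^\top\lambda_k$ cancel, leaving the clean identity $A^\top\lambda_{k+1}=\nabla f_{\mathcal I_k}(x_k)+\tfrac1\eta G(x_{k+1}-x_k)$.

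Next I would apply this identity also at index $k-1$ (handling $k=0$ separately using the initialization $x_{-1}=x_0$, or in the natural convention for a virtual batch $\mathcal I_{-1}$) and subtract the two, which yields
\[
A^\top(\lambda_{k+1}-\lambda_k)=\big(\nabla f_{\mathcal I_k}(x_k)-\nabla f_{\mathcal I_{k-1}}(x_{k-1})\big)+\tfrac1\eta G(x_{k+1}-x_k)-\tfrac1\eta G(x_k-x_{k-1}).
\]
By Assumption~\ref{fucor}, $A^\top A\succ0$ with smallest eigenvalue $\varsigma_A>0$, so $\|\lambda_{k+1}-\lambda_k\|^2\le\varsigma_A^{-1}\|A^\top(\lambda_{k+1}-\lambda_k)\|^2$. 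I would then insert $\pm\nabla f(x_k)$ and $\pm\nabla f(x_{k-1})$ to split the right-hand side into the five vectors $\nabla f_{\mathcal I_k}(x_k)-\nabla f(x_k)$, $\nabla f(x_k)-\nabla f(x_{k-1})$, $\nabla f(x_{k-1})-\nabla f_{\mathcal I_{k-1}}(x_{k-1})$, $\tfrac1\eta G(x_{k+1}-x_k)$ and $-\tfrac1\eta G(x_k-x_{k-1})$, apply $\|\sum_{i=1}^5 u_i\|^2\le 5\sum_{i=1}^5\|u_i\|^2$, bound the second vector by $L\|x_k-x_{k-1}\|$ (Assumption~\ref{Lsm}) and the last two by $\tfrac{\zeta_{\max}}{\eta}\|x_{k+1}-x_k\|$ and $\tfrac{\zeta_{\max}}{\eta}\|x_k-x_{k-1}\|$ (using $\|Gx\|\le\zeta_{\max}\|x\|$), merge the two terms proportional to $\|x_k-x_{k-1}\|^2$, and finally take expectations. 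The coefficients $5/\varsigma_A$, $5\zeta_{\max}^2/(\varsigma_A\eta^2)$ and $5(\zeta_{\max}^2/\eta^2+L^2)/\varsigma_A$ then fall out exactly as stated.

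The only genuinely delicate step is the norm conversion $\|\lambda_{k+1}-\lambda_k\|^2\le\varsigma_A^{-1}\|A^\top(\lambda_{k+1}-\lambda_k)\|^2$, which is where the full-rank hypothesis on $A$ is essential (and the reason $\varsigma_A$, the smallest eigenvalue of $A^\top A$, appears as the governing conditioning constant throughout the analysis); all the remaining manipulations are routine applications of the Cauchy–Schwarz-type inequality $\|\sum u_i\|^2\le n\sum\|u_i\|^2$, Lipschitz continuity of $\nabla f$, and the spectral bound on $G$. A minor secondary point is making the index-$(k-1)$ identity and the telescoping consistent at $k=0$ given $x_{-1}=x_0$.
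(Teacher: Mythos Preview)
Your proposal is correct and follows essentially the same route as the paper: derive $A^\top\lambda_{k+1}=\nabla f_{\mathcal I_k}(x_k)+\tfrac1\eta G(x_{k+1}-x_k)$ from the $x$-optimality condition and the dual update, difference consecutive identities, split into the same five pieces, apply Lipschitzness and the spectral bound on $G$, and take expectations. The only cosmetic difference is that the paper phrases the norm conversion via the pseudoinverse, writing $\lambda_{k+1}=(A^\top)^+\big(\nabla f_{\mathcal I_k}(x_k)+\tfrac1\eta G(x_{k+1}-x_k)\big)$ and then using $\|(A^\top)^+w\|^2\le\varsigma_A^{-1}\|w\|^2$, rather than your direct inequality $\|\lambda_{k+1}-\lambda_k\|^2\le\varsigma_A^{-1}\|A^\top(\lambda_{k+1}-\lambda_k)\|^2$.
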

		\begin{proof}
			Using the optimal condition of the step 5 in Algorithm \ref{alg1}, we have
			\[\nabla f_{\mathcal{I}_k}(x_k)-A^\top\lambda_k+\beta A^\top(Ax_{k+1}+By_{k+1}-c)+\frac{G}{\eta}(x_{k+1}-x_k)=0.\]
			Using the step 6 of Algorithm \ref{alg1}, we have
			\begin{equation}\label{SM1Alamk1}
				A^\top\lambda_{k+1}=\nabla f_{\mathcal{I}_k}(x_k)+\frac{G}{\eta}(x_{k+1}-x_k).
			\end{equation}
			and
				\[\lambda_{k+1}=(A^\top)^+(\nabla f_{\mathcal{I}_k}(x_k)+\frac{G}{\eta}(x_{k+1}-x_k)).\]
				By Assumption \ref{fucor}, it has $(A^\top)^+=A(A^\top A)^{-1}$ and $\zeta_{\max}(((A^\top)^+)^\top (A^\top)^+)=\zeta_{\max}((A^\top A)^{-1})=\frac{1}{\varsigma_A}$. Then, we have
			\begin{align}
				&\|\lambda_{k+1}-\lambda_k\|^2\nonumber\\
				=&\|(A^\top)^+(\nabla f_{\mathcal{I}_k}(x_k)+\frac{G}{\eta}(x_{k+1}-x_k)-\nabla f_{\mathcal{I}_{k-1}}(x_{k-1}) -\frac{G}{\eta}(x_k-x_{k-1}))\|^2\nonumber\\
				\leq & \frac{1}{\varsigma_A}\|\nabla f_{\mathcal{I}_k}(x_k)+\frac{G}{\eta}(x_{k+1}-x_k)-\nabla f_{\mathcal{I}_{k-1}}(x_{k-1}) -\frac{G}{\eta}(x_k-x_{k-1})\|^2\nonumber\\
				=& \frac{1}{\varsigma_A}\|\nabla f_{\mathcal{I}_k}(x_k)-\nabla f(x_k)+\nabla f(x_k)-\nabla f(x_{k-1})\nonumber\\
				&+\nabla f(x_{k-1})-\nabla f_{\mathcal{I}_{k-1}}(x_{k-1}) +\frac{G}{\eta}(x_{k+1}-x_k)-\frac{G}{\eta}(x_k-x_{k-1})\|^2\nonumber\\
				\leq &\frac{5}{\varsigma_A}(\|\nabla f_{\mathcal{I}_k}(x_k)-\nabla f(x_k)\|^2+\|\nabla f(x_k)-\nabla f(x_{k-1})\|^2\nonumber\\
				&+\|\nabla f(x_{k-1})-\nabla f_{\mathcal{I}_{k-1}}(x_{k-1})\|^2+\|\frac{G}{\eta}(x_{k+1}-x_k)\|^2+\|\frac{G}{\eta}(x_k-x_{k-1})\|^2) \nonumber\\
				\leq &\frac{5}{\varsigma_A}\|\nabla f_{\mathcal{I}_k}(x_k)-\nabla f(x_k)\|^2+\frac{5}{\varsigma_A}\|\nabla f(x_{k-1})-\nabla f_{\mathcal{I}_{k-1}}(x_{k-1})\|^2\nonumber\\
				&+\frac{5\zeta_{\max}^2}{\varsigma_A\eta^2}\|x_{k+1}-x_k\|^2+\frac{5}{\varsigma_A}(\frac{\zeta_{\max}^2}{\eta^2}+L^2)\|x_k-x_{k-1}\|^2,\label{SM1lamk1k}
			\end{align}
			where the last inequality holds by Assumption \ref{Lsm}.
			Taking expectation $\mathbb{E}(\cdot)$ over the above inequality, we get the result and complete the proof.
		\end{proof}

\begin{lemma}\label{SM1MFD}
			Under Assumptions \ref{nograb}-\ref{fucor}, suppose the sequence $\{w_k\}$ is generated by Algorithm \ref{alg1}, and define a merit function $\phi_k$ as follows:
			\[\phi_k=\mathbb{E}\left[\mathcal{L}_\beta(w_k)+\delta\|x_k-x_{k-1}\|^2+\gamma\|x_{k-1}-x_{k-2}\|^2\right],\]
			where $\delta=\frac{1}{2c_\tau}+\frac{5}{\beta  \varsigma_A}(\frac{2}{c_\tau}+\frac{\zeta_{\max}^2}{\eta^2}+L^2)$ and $\gamma=\frac{5}{c_\tau\beta \varsigma_A}$, then
			\begin{equation}\label{SM1mfd}
				\phi_{k+1}\leq\phi_k-\rho\mathbb{E}\|x_{k+1}-x_k\|^2+(\frac{1}{2}+\frac{10}{\beta\varsigma_A})\frac{\epsilon}{ c_\epsilon},
			\end{equation}
			where $\rho=\frac{\zeta_{\min}}{\eta}+\frac{\beta\varsigma_A}{2}-\frac{L+1}{2}-\frac{10\zeta_{\max}^2}{\beta \varsigma_A\eta^2}-\frac{1}{2c_\tau}-\frac{10}{c_\tau \beta \varsigma_A}-\frac{5L^2}{\beta \varsigma_A}$.
		\end{lemma}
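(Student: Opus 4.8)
The plan is to run the classical three-block descent estimate for the augmented Lagrangian and then feed the variance of the stochastic gradient into the telescoping potential through the adaptive batch rule. First I would split
\[
\mathcal{L}_\beta(w_{k+1})-\mathcal{L}_\beta(w_k)
=\big[\mathcal{L}_\beta(x_{k+1},y_{k+1},\lambda_{k+1})-\mathcal{L}_\beta(x_{k+1},y_{k+1},\lambda_k)\big]
+\big[\mathcal{L}_\beta(x_{k+1},y_{k+1},\lambda_k)-\mathcal{L}_\beta(x_k,y_{k+1},\lambda_k)\big]
+\big[\mathcal{L}_\beta(x_k,y_{k+1},\lambda_k)-\mathcal{L}_\beta(x_k,y_k,\lambda_k)\big].
\]
The last bracket is $\le 0$ because $y_{k+1}$ minimises $\mathcal{L}_\beta(x_k,\cdot,\lambda_k)$; the first bracket equals $\frac1\beta\|\lambda_{k+1}-\lambda_k\|^2$ since the $\lambda$-update gives $Ax_{k+1}+By_{k+1}-c=-\frac1\beta(\lambda_{k+1}-\lambda_k)$; for the middle bracket I would use the optimality condition of the $x$-subproblem together with the $\frac1\eta\|\cdot\|_G^2$ strong convexity of $\tilde{\mathcal L}_\beta$ and the $\beta\varsigma_A$ strong convexity contributed by the quadratic penalty, plus the descent inequality $f(x_{k+1})\le f(x_k)+\nabla f(x_k)^\top(x_{k+1}-x_k)+\frac L2\|x_{k+1}-x_k\|^2$, to obtain
\[
\mathcal{L}_\beta(x_{k+1},y_{k+1},\lambda_k)-\mathcal{L}_\beta(x_k,y_{k+1},\lambda_k)
\le -\Big(\frac{\zeta_{\min}}{\eta}+\frac{\beta\varsigma_A}{2}-\frac L2\Big)\|x_{k+1}-x_k\|^2
+\big\langle\nabla f(x_k)-\nabla f_{\mathcal I_k}(x_k),\,x_{k+1}-x_k\big\rangle,
\]
and then bound the inner product by $\frac12\|x_{k+1}-x_k\|^2+\frac12\|\nabla f_{\mathcal I_k}(x_k)-\nabla f(x_k)\|^2$ with Young's inequality.

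Next I would insert the bound on $\mathbb{E}\|\lambda_{k+1}-\lambda_k\|^2$ from Lemma~\ref{SM1bdual} into the first bracket, take total expectations, and collect terms. At that point the only non-difference quantities left are the two gradient errors $\mathbb E\|\nabla f_{\mathcal I_k}(x_k)-\nabla f(x_k)\|^2$ and $\mathbb E\|\nabla f_{\mathcal I_{k-1}}(x_{k-1})-\nabla f(x_{k-1})\|^2$, appearing with coefficients $\frac12+\frac5{\beta\varsigma_A}$ and $\frac5{\beta\varsigma_A}$ respectively. The crucial step is to control them with the adaptive rule: conditioning on the $\sigma$-field $\mathcal F_k$ that determines $x_k$ and $x_{k-1}$ (hence $M_k$), independence of the fresh sample $\mathcal I_k$ and Assumption~\ref{nograb} give $\mathbb E[\,\|\nabla f_{\mathcal I_k}(x_k)-\nabla f(x_k)\|^2\,|\,\mathcal F_k]\le \sigma^2/\lceil M_k\rceil\le\sigma^2/M_k$; since $M_k=\min\{c_\tau\sigma^2\|x_k-x_{k-1}\|^{-2},c_\epsilon\sigma^2\epsilon^{-1}\}$ this equals $\max\{\|x_k-x_{k-1}\|^2/c_\tau,\ \epsilon/c_\epsilon\}\le\|x_k-x_{k-1}\|^2/c_\tau+\epsilon/c_\epsilon$, and likewise the $(k-1)$ error is $\le\|x_{k-1}-x_{k-2}\|^2/c_\tau+\epsilon/c_\epsilon$. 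This turns the stochastic noise into extra multiples of $\|x_k-x_{k-1}\|^2$ and $\|x_{k-1}-x_{k-2}\|^2$ plus the residual $\big(\frac12+\frac{10}{\beta\varsigma_A}\big)\epsilon/c_\epsilon$, which is exactly the $\epsilon$-term appearing in \eqref{SM1mfd}.

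Finally I would add the telescoping corrections $\delta\|x_{k+1}-x_k\|^2-\delta\|x_k-x_{k-1}\|^2$ and $\gamma\|x_k-x_{k-1}\|^2-\gamma\|x_{k-1}-x_{k-2}\|^2$ carried by $\phi_k$, and choose $\gamma$ to kill the $\|x_{k-1}-x_{k-2}\|^2$ coefficient and $\delta$ to kill the $\|x_k-x_{k-1}\|^2$ coefficient; a short computation shows these requirements force precisely $\gamma=\frac{5}{c_\tau\beta\varsigma_A}$ and $\delta=\frac{1}{2c_\tau}+\frac{5}{\beta\varsigma_A}\big(\frac2{c_\tau}+\frac{\zeta_{\max}^2}{\eta^2}+L^2\big)$, leaving $-\rho$ in front of $\|x_{k+1}-x_k\|^2$ with $\rho$ as stated (the factor $10$ in the $\zeta_{\max}^2/(\beta\varsigma_A\eta^2)$ term arises as the sum of the $5\zeta_{\max}^2/(\beta\varsigma_A\eta^2)$ from Lemma~\ref{SM1bdual} and the same amount hidden inside $\delta$). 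I expect the main obstacle, and the place where the argument departs from the static-batch analysis of \citep{HC18}, to be the conditioning step justifying $\mathbb E\|\nabla f_{\mathcal I_k}(x_k)-\nabla f(x_k)\|^2\le\mathbb E[\sigma^2/M_k]$ when $M_k$ is itself a random function of the past iterates, together with the exact coefficient bookkeeping that makes both lower-order difference terms cancel simultaneously with the stated $\delta,\gamma$.
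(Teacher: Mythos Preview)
Your proposal is correct and follows essentially the same route as the paper: the same three-block split of $\mathcal{L}_\beta(w_{k+1})-\mathcal{L}_\beta(w_k)$, the same use of the $x$-optimality identity $A^\top\lambda_{k+1}=\nabla f_{\mathcal I_k}(x_k)+\frac{G}{\eta}(x_{k+1}-x_k)$ plus the descent lemma and Young's inequality for the middle block, the same insertion of Lemma~\ref{SM1bdual}, the same conditioning argument to get $\mathbb{E}\|\nabla f_{\mathcal I_k}(x_k)-\nabla f(x_k)\|^2\le\mathbb{E}[\sigma^2/M_k]\le\frac{1}{c_\tau}\mathbb{E}\|x_k-x_{k-1}\|^2+\frac{\epsilon}{c_\epsilon}$, and the same choice of $\delta,\gamma$ to absorb the lagged terms. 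Your coefficient bookkeeping (including the origin of the factor $10$ in front of $\zeta_{\max}^2/(\beta\varsigma_A\eta^2)$) matches the paper exactly.
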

		\begin{proof}
			From the definition of the ALF $\mathcal{L}_\beta$, it follows that
			\begin{align}
				&\mathcal{L}_\beta(x_{k+1},y_{k+1},\lambda_k)-\mathcal{L}_\beta(x_k,y_{k+1},\lambda_k)\nonumber\\
				= & f(x_{k+1})-f(x_k)-\langle\lambda_k, A(x_{k+1}-x_k)\rangle+\frac{\beta}{2}\|Ax_{k+1}+By_{k+1}-c\|^2-\frac{\beta}{2}\|Ax_k+By_{k+1}-c\|^2 \nonumber\\
				=& f(x_{k+1})-f(x_k)-\langle\lambda_k-\beta(Ax_{k+1}+By_{k+1}-c), A(x_{k+1}-x_k)\rangle-\frac{\beta}{2}\|A(x_{k+1}-x_k)\|^2\nonumber\\
				=& f(x_{k+1})-f(x_k)-\langle A^\top\lambda_{k+1}, x_{k+1}-x_k\rangle-\frac{\beta}{2}\|A(x_{k+1}-x_k)\|^2\nonumber\\
				\overset{(i)}{=}&f(x_{k+1})-f(x_k)-\langle \nabla f_{\mathcal{I}_k}(x_k)+\frac{G}{\eta}(x_{k+1}-x_k), x_{k+1}-x_k\rangle-\frac{\beta}{2}\|A(x_{k+1}-x_k)\|^2\nonumber\\
				\overset{(ii)}{\leq} &\langle \nabla f(x_k)-\nabla f_{\mathcal{I}_k}(x_k), x_{k+1}-x_k\rangle+\frac{L}{2}\|x_{k+1}-x_k\|^2-\frac{1}{\eta}\|x_{k+1}-x_k\|_G^2-\frac{\beta}{2}\|A(x_{k+1}-x_k)\|^2\nonumber\\
				\leq &\frac{1}{2}\|\nabla f(x_k)-\nabla f_{\mathcal{I}_k}(x_k)\|^2-(\frac{\zeta_{\min}}{\eta}+\frac{\beta\varsigma_A}{2}-\frac{L+1}{2})\|x_{k+1}-x_k\|^2,\label{SM1Lxk1k}
			\end{align}
			where the equality (i) holds by \eqref{SM1Alamk1} and the inequality (ii) holds by Assumption \ref{Lsm}.
			Since $y_{k+1}$ is a minimizer of step 4 in Algorithm \ref{alg1}, we have
			\begin{equation}\label{SM1Lyk1k}
				\mathcal{L}_\beta(x_k,y_{k+1},\lambda_k)
				\leq\mathcal{L}_\beta(x_k,y_k,\lambda_k).
			\end{equation}
			Using the step 6 in Algorithm \ref{alg1}, we have
			\begin{equation}\label{SM1Llamk1k}
				\mathcal{L}_\beta(x_{k+1},y_{k+1},\lambda_{k+1})-\mathcal{L}_\beta(x_{k+1},y_{k+1},\lambda_k)=\frac{1}{\beta}\|\lambda_{k+1}-\lambda_k\|^2.
			\end{equation}
			Combining \eqref{SM1Lxk1k}-\eqref{SM1Llamk1k}, we have
			\[\begin{aligned}
				\mathcal{L}_\beta(w_{k+1})\leq &\mathcal{L}_\beta(w_k)+\frac{1}{2}\|\nabla f(x_k)-\nabla f_{\mathcal{I}_k}(x_k)\|^2\\
				&-(\frac{\zeta_{\min}}{\eta}
				+\frac{\beta\varsigma_A}{2}-\frac{L+1}{2})\|x_{k+1}-x_k\|^2+\frac{1}{\beta}\|\lambda_{k+1}-\lambda_k\|^2.
			\end{aligned}\]
			Taking expectation $\mathbb{E}(\cdot)$ over the above inequality yields 
			\[\begin{aligned}
				\mathbb{E}\mathcal{L}_\beta(w_{k+1})\leq & \mathbb{E}\mathcal{L}_\beta(w_k)+\frac{1}{2}\mathbb{E}\|\nabla f(x_k)-\nabla f_{\mathcal{I}_k}(x_k)\|^2\\ &-(\frac{\zeta_{\min}}{\eta}+\frac{\beta\varsigma_A}{2}-\frac{L+1}{2})\mathbb{E}\|x_{k+1}-x_k\|^2+\frac{1}{\beta}\mathbb{E}\|\lambda_{k+1}-\lambda_k\|^2.
			\end{aligned}\]
			Combining the above inequality with Lemma \ref{SM1bdual}, we have
			\[\begin{aligned}
				&\mathbb{E}\mathcal{L}_\beta(w_{k+1})-\mathbb{E}\mathcal{L}_\beta(w_k)\\
				\leq & \frac{1}{2}\mathbb{E}\|\nabla f(x_k)-\nabla f_{\mathcal{I}_k}(x_k)\|^2-(\frac{\zeta_{\min}}{\eta}+\frac{\beta\varsigma_A}{2}-\frac{L+1}{2})\mathbb{E}\|x_{k+1}-x_k\|^2\\
				&+\frac{5}{\beta\varsigma_A}\mathbb{E}\|\nabla f_{\mathcal{I}_k}(x_k)-\nabla f(x_k)\|^2+\frac{5}{\beta\varsigma_A}\mathbb{E}\|\nabla f(x_{k-1})-\nabla f_{\mathcal{I}_{k-1}}(x_{k-1})\|^2\\
				&+\frac{5\zeta_{\max}^2}{\beta\varsigma_A\eta^2}\mathbb{E}\|x_{k+1}-x_k\|^2+\frac{5}{\beta\varsigma_A}(\frac{\zeta_{\max}^2}{\eta^2}+L^2)\mathbb{E}\|x_k-x_{k-1}\|^2.
			\end{aligned}\]
			Then, we upper-bound $\mathbb{E}\|\nabla f(x_k)-\nabla f_{\mathcal{I}_k}(x_k)\|^2$ in the above inequality through the following steps,
				\[\begin{aligned}
					&\mathbb{E}\|\nabla f(x_k)-\nabla f_{\mathcal{I}_k}(x_k)\|^2=\mathbb{E}\left\|\nabla f(x_k)-\frac{1}{\lceil M_k \rceil}\sum_{i\in \mathcal{I}_k} \nabla f_i(x_k)\right\|^2\\
					&\leq\mathbb{E}\frac{1}{M_k^2}\left\|\sum_{i\in \mathcal{I}_k}(\nabla f(x_k)-\nabla f_i(x_k))\right\|^2\\
					&= \mathbb{E}\frac{1}{M_k^2}\sum_{i\in \mathcal{I}_k}\sum_{j\in \mathcal{I}_k}\langle\nabla f(x_k)-\nabla f_i(x_k),\nabla f(x_k)-\nabla f_j(x_k)\rangle\\
					&= \mathbb{E}_{x_0,\ldots,x_k}\left(\mathbb{E}_k\frac{1}{M_k^{2}}\sum_{i\in \mathcal{I}_k}\sum_{j\in \mathcal{I}_k}\langle\nabla f(x_k)-\nabla f_i(x_k),\nabla f(x_k)-\nabla f_j(x_k)\rangle\right)\\
					&= \mathbb{E}_{x_0,\ldots,x_k}\frac{1}{M_k^{2}}\sum_{i\in \mathcal{I}_k}\sum_{j\in \mathcal{I}_k}\mathbb{E}_k\langle\nabla f(x_k)-\nabla f_{i}(x_k),\nabla f(x_k)-\nabla f_{j}(x_k)\rangle\\
					&\overset{(i)}{=}\mathbb{E}_{x_0,\ldots,x_k}\frac{1}{M_k^{2}}\sum_{i\in \mathcal{I}_k}\mathbb{E}_k\left\|\nabla f(x_k)-\nabla f_{i}(x_k)\right\|^{2} \overset{(ii)}{\leq} \mathbb{E}\frac{\sigma^{2}}{M_k},
				\end{aligned}\]
				where the inequality holds by $\lceil M_k \rceil \geq M_k$; $(i)$ follows from $\mathbb{E}_k\nabla f_{i}(x_k)=\nabla f(x_k)$, and $\mathbb{E}_k\langle\nabla f(x_k)-\nabla f_i(x_k),\nabla f(x_k)-\nabla f_{j}(x_k)\rangle=0$ for $i\neq j$; and $(ii)$ follows from Assumption \ref{nograb}.  
				Next, we derive an upper bound for $\mathbb{E}\frac{\sigma^{2}}{M_k}$. Following from the definition of $M_k$, we have
				\begin{equation}\label{Mk}
					\mathbb{E}\frac{\sigma^2}{M_k}\leq\mathbb{E}\left(\frac{\|x_k-x_{k-1}\|^2}{c_\tau}+\frac{\epsilon}{c_\epsilon} \right)\leq\frac{\mathbb{E}\|x_k-x_{k-1}\|^2}{c_\tau}+\frac{\epsilon}{c_\epsilon},
			\end{equation}
			Thus, we have
			\[\begin{aligned}
				&\mathbb{E}\mathcal{L}_\beta(w_{k+1})-\mathbb{E}\mathcal{L}_\beta(w_k)\\
				\leq &(\frac{1}{2}+\frac{10}{\beta\varsigma_A})\frac{\epsilon}{c_\epsilon}+(\frac{1}{2c_\tau}+\frac{5}{c_\tau\beta  \varsigma_A}+\frac{5}{\beta\varsigma_A}(\frac{\zeta_{\max}^2}{\eta^2}+L^2))\mathbb{E}\|x_k-x_{k-1}\|^2\\
				&+\frac{5}{c_\tau\beta \varsigma_A}\|x_{k-1}-x_{k-2}\|^2-(\frac{\zeta_{\min}}{\eta}+\frac{\beta\varsigma_A}{2}-\frac{L+1}{2}-\frac{5\zeta_{\max}^2}{\beta \varsigma_A\eta^2})\mathbb{E}\|x_{k+1}-x_k\|^2.
			\end{aligned}\]
			Define the following merit function,
			\[\phi_k=\mathbb{E}\left[\mathcal{L}_\beta(w_k)+\delta\|x_k-x_{k-1}\|^2+\gamma\|x_{k-1}-x_{k-2}\|^2\right],\]
			with $\delta=\frac{1}{2c_\tau}+\frac{5}{\beta  \varsigma_A}(\frac{2}{c_\tau}+\frac{\zeta_{\max}^2}{\eta^2}+L^2)$ and $\gamma=\frac{5}{c_\tau\beta \varsigma_A}$. Then we have \eqref{SM1mfd} and the proof is completed.
		\end{proof}

By combining with Lemmas \ref{SM1bdual} and \ref{SM1MFD}, we prove the convergence of AbsSADMM in Theorem \ref{AdsCR1}.
		\begin{proof}[\textbf{Proof of Theorem \ref{AdsCR1}}]\label{THE1}
			First, we prove that $\phi_k$ is bounded from below. 
			Since $A$ is a full column rank matrix, we have $(A^\top)^+ = A(A^\top A)^{-1}$ and $\zeta_{\max}(((A^\top)^+)^\top (A^\top)^+)=\zeta_{\max}((A^\top A)^{-1})=\frac{1}{\varsigma_A}$. Then, we have
			\[\begin{aligned}
				&\mathbb{E}[f(x_{k+1})+g(y_{k+1})-\lambda_{k+1}^\top (Ax_{k+1}+By_{k+1}-c)+\frac{\beta}{2}\|Ax_{k+1}+By_{k+1}-c\|^2]\\
				\overset{(i)}{=}&\mathbb{E}[f(x_{k+1})+g(y_{k+1})-\langle (A^\top)^+(\nabla f_{\mathcal{I}_k}(x_k)+\frac{G}{\eta}(x_{k+1}-x_k)), Ax_{k+1}+By_{k+1}-c\rangle\\
				&+\frac{\beta}{2}\|Ax_{k+1}+By_{k+1}-c\|^2]\\
				\overset{(ii)}{\geq}& \mathbb{E}[f(x_{k+1})+g(y_{k+1})-\frac{1}{2\beta}\|(A^\top)^+(\nabla f_{\mathcal{I}_k}(x_k)+\frac{G}{\eta}(x_{k+1}-x_k))\|^2\\
				&-\frac{\beta}{2}\|Ax_{k+1}+By_{k+1}-c\|^2+\frac{\beta}{2}\|Ax_{k+1}+By_{k+1}-c\|^2]\\
				\overset{(iii)}{\geq}&\mathbb{E}[f(x_{k+1})+g(y_{k+1})-\frac{1}{2\beta\varsigma_A}\|\nabla f_{\mathcal{I}_k}(x_k)+\frac{G}{\eta}(x_{k+1}-x_k)-\nabla f(x_k)+\nabla f(x_k)\|^2]\\
				\overset{(iv)}{\geq}&\mathbb{E}[f(x_{k+1})+g(y_{k+1})-\frac{3}{2\beta\varsigma_A}\|\nabla f_{\mathcal{I}_k}(x_k)-\nabla f(x_k)\|^2-\frac{3}{2\beta\varsigma_A}\mathbb{E}\|\nabla f(x_k)\|^2\\
				&-\frac{3\zeta_{\max}^2}{2\beta\eta^2\varsigma_A}\mathbb{E}\|x_{k+1}-x_k\|^2]\\
				\geq&\mathbb{E}[f(x_{k+1})+g(y_{k+1})-\frac{3(\sigma^2+\mu^2)}{2\beta\varsigma_A}-\frac{3\zeta_{\max}^2}{2\beta\eta^2\varsigma_A}\|x_{k+1}-x_k\|^2],
			\end{aligned}\]
			where $(i)$ holds by \eqref{SM1Alamk1}; $(ii)$ is obtained by applying $\langle a,b \rangle\leq \frac{1}{2\beta}\|a\|^2+\frac{\beta}{2}\|b\|^2$ to the term $\langle (A^\top)^+(\nabla f_{\mathcal{I}_k}(x_k)+\frac{G}{\eta}(x_{k+1}-x_k)), Ax_{k+1}+By_{k+1}-c\rangle$; $(iii)$ holds by $\zeta_{\max}(((A^\top)^+)^\top (A^\top)^+)=\frac{1}{\varsigma_A}$; and $(iv)$ follows from Assumptions \ref{nograb} and \ref{grab}. Using Assumption \ref{lb} and the definition of $\phi_k$, we have
			\begin{equation}\label{MFLB}
				\phi_{k+1}\geq f^*+g^*-\frac{3(\sigma^2+\mu^2)}{2\beta\varsigma_A}.
			\end{equation}
			It follows that the function $\phi_k$ is bounded from below. Let $\phi^*$ denotes a lower bound of sequence $\{\phi_k\}$. 
			Choose the parameters $\eta,\beta,c_\tau$ used in Algorithm \ref{alg1} such that
			\[\rho=\frac{\zeta_{\min}}{\eta}+\frac{\beta\varsigma_A}{2}-\frac{L+1}{2}-\frac{1}{2c_\tau}-\frac{10\zeta_{\max}^2}{\beta \varsigma_A\eta^2}-\frac{10}{c_\tau \beta \varsigma_A}-\frac{5L^2}{\beta \varsigma_A}>0.\]
			Telescoping inequality \eqref{SM1mfd} over $k$ from $0$ to $K$, we have
			\begin{equation}\label{SM1the}
				\frac{1}{K}\sum_{k=0}^{K-1}\mathbb{E}\|x_{k+1}-x_k\|^2\leq\frac{\phi_0-\phi^*}{\rho K}+(\frac{1}{2}+\frac{10}{\beta\varsigma_A})\frac{\epsilon}{\rho c_\epsilon}.
			\end{equation}
			By step 4 in Algorithm \ref{alg1}, there exists a sub-gradient $\mu \in \partial_g (y_{k+1})$ such that
				\begin{align}
					\mathbb{E}[{\rm dist}(B^\top \lambda_{k+1},\partial_g (y_{k+1}))^2]\leq & \mathbb{E}\|\mu-B^\top \lambda_{k+1}\|^2\nonumber\\
					=&\mathbb{E}\|B^\top\lambda_k-\beta B^\top (Ax_k+By_{k+1}-c)-B^\top \lambda_{k+1})\|^2\nonumber\\
					=&\mathbb{E}\|\beta B^\top A(x_{k+1}-x_k)\|^2\nonumber\\
					\leq&\beta^2 \|B\|\|A\|\mathbb{E}\|x_{k+1}-x_k\|^2\nonumber\\
					\leq & \beta^2 \|B\|\|A\|\theta_k.\label{SM1disty}
				\end{align}
				By \eqref{SM1Alamk1} and \eqref{Mk}, we have
				\begin{align}
					&\mathbb{E}\|A^\top \lambda_{k+1}-\nabla f(x_{k+1})\|^2\nonumber\\
					=&\mathbb{E}\|\nabla f_{\mathcal{I}_k}(x_k)+\frac{G}{\eta}(x_{k+1}-x_k)-\nabla f(x_k)+\nabla f(x_k)-\nabla f(x_{k+1})\|^2\nonumber\\
					\leq&\mathbb{E}\|\nabla f_{\mathcal{I}_k}(x_k)-\nabla f(x_k)\|+\mathbb{E}\|\nabla f(x_k)-\nabla f(x_{k+1})\|^2+\mathbb{E}\|\frac{G}{\eta}(x_{k+1}-x_k)\|^2\nonumber\\
					\leq& \mathbb{E}\frac{3\sigma^2}{M_k}+ 3(L^2+\frac{\zeta_{\max}^2}{\eta^2})\mathbb{E}\|x_{k+1}-x_k\|^2\nonumber\\
					\leq & \frac{3}{c_\tau}\mathbb{E}\|x_k-x_{k-1}\|^2+\frac{3}{c_\epsilon}\epsilon+ 3(L^2+\frac{\zeta_{\max}^2}{\eta^2})\mathbb{E}\|x_{k+1}-x_k\|^2\nonumber\\
					\leq&3(\frac{1}{c_\tau}+L^2+\frac{\zeta_{\max}^2}{\eta^2})\theta_k+\frac{3}{c_\epsilon}\epsilon.\label{SM1distx}
			\end{align}
			By the step 6 of Algorithm \ref{alg1} and Lemma \ref{SM1bdual}, we have
			\begin{align}
				&\mathbb{E}\|Ax_{k+1}+By_{k+1}-c\|^2=\frac{1}{\beta^2}\mathbb{E}\|\lambda_{k+1}-\lambda_k\|^2\nonumber\\
				\leq&\mathbb{E}\frac{5\sigma^2}{\beta^2 \varsigma_AM_k}+\mathbb{E}\frac{5\sigma^2}{\beta^2 \varsigma_AM_{k-1}}+\frac{5\zeta_{\max}^2}{\beta^2\varsigma_A\eta^2}\mathbb{E}\|x_{k+1}-x_k\|^2+\frac{5}{\beta^2 \varsigma_A}(\frac{\zeta_{\max}^2}{\eta^2}+L^2)\mathbb{E}\|x_k-x_{k-1}\|^2\nonumber\\
				\leq&\frac{10}{\beta^2 \varsigma_Ac_\epsilon}\epsilon+\frac{5\zeta_{\max}^2}{\beta^2\varsigma_A\eta^2}\mathbb{E}\|x_{k+1}-x_k\|^2+\frac{5}{\beta^2 \varsigma_A}(\frac{1}{c_\tau}+\frac{\zeta_{\max}^2}{\eta^2}+L^2)\mathbb{E}\|x_k-x_{k-1}\|^2\nonumber\\
				&+\frac{5}{\beta^2 \varsigma_A c_\tau}\mathbb{E}\|x_{k-1}-x_{k-2}\|^2\nonumber\\
				\leq & \frac{10}{\beta^2 \varsigma_A c_\epsilon}\epsilon+\frac{5}{\beta^2  \varsigma_A}(\frac{1}{c_\tau}+\frac{\zeta_{\max}^2}{\eta^2}+L^2)\theta_k.\label{SM1distlam}
			\end{align}
			Let
			\[\psi_1=\beta^2 \|B\|\|A\|,~\psi_2=3(\frac{1}{c_\tau}+L^2+\frac{\zeta_{\max}^2}{\eta^2}) ,~\psi_3=\frac{5}{\beta^2  \varsigma_A}(\frac{1}{c_\tau}+\frac{\zeta_{\max}^2}{\eta^2}+L^2).\]
			and define a useful variable $\theta_k=\mathbb{E}\|x_{k+1}-x_k\|^2+\mathbb{E}\|x_k-x_{k-1}\|^2+\mathbb{E}\|x_{k-1}-x_{k-2}\|^2$.
			Since
			\[\sum_{k=0}^{K-1}\|x_k-x_{k-1}\|^2+\|x_K-x_{K-1}\|^2=\sum_{k=0}^{K-1}\|x_{k+1}-x_k\|^2+\|x_0-x_{-1}\|^2.\]
			It follows from $x_0=x_{-1}$ that $\sum_{k=0}^{K-1}\|x_k-x_{k-1}\|^2\leq \sum_{k=0}^{K-1}\|x_{k+1}-x_k\|^2$. 
			By \eqref{SM1the}-\eqref{SM1distlam} and Definition \ref{stationary}, we have
			\[\begin{aligned}
				\min_{1\leq k \leq K}\mathbb{E}[{\rm dist}(0,\partial \mathcal{L}(w_k))^2]&\leq \frac{\psi}{K} \sum_{k=1}^{K-1}\theta_k+\frac{\epsilon}{c_\epsilon}\max\{\frac{10}{\beta^2 \varsigma_A},3\}\\
				&\leq\frac{3\psi(\phi_0-\phi^*)}{\rho K}+\frac{\epsilon}{c_\epsilon}(\frac{1}{\rho}+\frac{20}{\rho\beta\varsigma_A}+\max\{\frac{10}{\beta^2 \varsigma_A},3\}),
			\end{aligned}
			\]
			where $\psi=\max\{\psi_1,\psi_2,\psi_3\}$. Then, plugging $K=\frac{6\psi(\phi_0-\phi^*)}{\rho\epsilon}$ and $c_\epsilon=2\kappa$ with $\kappa=\frac{1}{\rho}+\frac{20}{\rho\beta\varsigma_A}+\max\{\frac{10}{\beta^2 \varsigma_A},3\}$ in the above inequality, we have
			\[\min_{1\leq k \leq K}\mathbb{E}[{\rm dist}(0,\partial \mathcal{L}(w_k))^2]\leq\frac{\epsilon}{2}+\frac{\epsilon}{2}\leq\epsilon.\]
			The proof is completed.
		\end{proof}

\section{Convergence Analysis for AbsSVRG-ADMM}\label{app2}
\begin{lemma}\label{SM2SG}
			Suppose that Assumptions \ref{nograb} and \ref{Lsm} hold. Algorithm \ref{alg2} generates the stochastic gradient $\{v_t^s\}$ satisfies 
			\[\mathbb{E}_{0,s}\|v_t^s-\nabla f(x_t^s)\|^2\leq \frac{L^2}{b} \mathbb{E}_{0,s}\|x_t^s-\tilde{x}^{s-1}\|^2+\frac{I_{(N_s<n)}}{N_s}\sigma^2,\]
			where $\mathbb{E}_{t,s}(\cdot)$ denotes $\mathbb{E}(\cdot|x_0^1,x_0^2,\dots,x_2^1,\dots,x_t^s)$, and $I_{(A)} = 1$ if the event $A$ occurs and 0 otherwise.
		\end{lemma}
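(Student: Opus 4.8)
The plan is to condition on everything generated up to the current inner iterate $x_t^s$ \emph{together with} the outer sample $\mathcal N_s$, perform a bias--variance split of $v_t^s-\nabla f(x_t^s)$, and then pass down to $\mathbb E_{0,s}$ by the tower property. Let $\mathcal G$ denote the $\sigma$-algebra generated by all iterates through $x_t^s$ and by $\mathcal N_s$; under $\mathcal G$ the points $x_t^s$, $\tilde x^{s-1}=x_0^s$ and the outer estimate $g^s=\nabla f_{\mathcal N_s}(\tilde x^{s-1})$ are deterministic, so the only remaining randomness in $v_t^s$ is the inner mini-batch $\mathcal I$, which by construction in Algorithm~\ref{alg2} consists of $b$ indices drawn i.i.d. uniformly from $[n]$. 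Since $\mathbb E_i[\nabla f_i(x)-\nabla f_i(y)]=\nabla f(x)-\nabla f(y)$ for a uniform index $i$, the conditional mean is $\mathbb E[v_t^s-\nabla f(x_t^s)\mid\mathcal G]=g^s-\nabla f(\tilde x^{s-1})$, and the identity $\mathbb E\|X\|^2=\|\mathbb E X\|^2+\mathbb E\|X-\mathbb E X\|^2$ yields
\[
\mathbb E\big[\|v_t^s-\nabla f(x_t^s)\|^2\mid\mathcal G\big]=\big\|g^s-\nabla f(\tilde x^{s-1})\big\|^2+\operatorname{Var}\!\big(v_t^s\mid\mathcal G\big).
\]

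The second (variance) term I would control using that $\mathcal I$ is a sample \emph{with replacement}, so the variance of the $b$-average is $1/b$ times the single-draw variance; discarding the mean and invoking per-component smoothness then gives
\[
\operatorname{Var}\!\big(v_t^s\mid\mathcal G\big)\le\frac1b\,\mathbb E_i\big\|\nabla f_i(x_t^s)-\nabla f_i(\tilde x^{s-1})\big\|^2=\frac1{bn}\sum_{i=1}^n\big\|\nabla f_i(x_t^s)-\nabla f_i(\tilde x^{s-1})\big\|^2\le\frac{L^2}{b}\,\|x_t^s-\tilde x^{s-1}\|^2,
\]
the last inequality being the $L$-Lipschitz continuity of each $\nabla f_i$ from Assumption~\ref{Lsm}.

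For the first (bias) term I would take $\mathbb E_{0,s}$, which is legitimate because $\mathcal F_{0,s}\subseteq\mathcal G$. Here $\tilde x^{s-1}$ is $\mathbb E_{0,s}$-measurable and, since $\tau_s$ is assembled entirely from the iterates of epoch $s-1$, so is the scalar $N_s$; the only fresh randomness in $g^s$ is the draw of $\mathcal N_s$. If $N_s=n$ then $\mathcal N_s=[n]$, $g^s=\nabla f(\tilde x^{s-1})$, and the bias is zero; if $N_s<n$ then the mini-batch form of Assumption~\ref{nograb} together with $\lceil N_s\rceil\ge N_s$ gives $\mathbb E_{0,s}\|g^s-\nabla f(\tilde x^{s-1})\|^2\le\sigma^2/N_s$. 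Both cases are captured by the factor $I_{(N_s<n)}$, and combining the three estimates after applying $\mathbb E_{0,s}$ to the first display produces exactly the asserted bound.

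The main obstacle is the conditioning bookkeeping rather than any individual inequality: one must condition at a level where \emph{both} $x_t^s$ and the outer batch $\mathcal N_s$ are frozen, so that the bias is exactly the outer-batch error and the inner average is a true i.i.d. mean, and then verify that this $\sigma$-algebra contains $\mathcal F_{0,s}$ before invoking the tower property. A secondary point is that the $L^2/b$ bound requires per-component smoothness $\|\nabla f_i(x)-\nabla f_i(y)\|\le L\|x-y\|$ and not merely smoothness of the average $f$.
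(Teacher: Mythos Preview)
Your proposal is correct and follows essentially the same route as the paper: both arguments split $v_t^s-\nabla f(x_t^s)$ into the outer-batch bias $g^s-\nabla f(\tilde x^{s-1})$ and the centered inner mini-batch average, kill the cross term by conditioning on the iterates (your $\mathcal G$, the paper's $\mathbb E_{t,s}$), reduce the inner variance by the $1/b$ factor from i.i.d.\ sampling, and finish with per-component Lipschitzness and Assumption~\ref{nograb}. Your observation that the $L^2/b$ bound needs $\|\nabla f_i(x)-\nabla f_i(y)\|\le L\|x-y\|$ rather than just smoothness of $f$ is well taken---the paper uses this implicitly in its step~(iv) without stating it in Assumption~\ref{Lsm}.
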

		\begin{proof}
				Let $F_i=\nabla f_{i}(x_t^s)-\nabla f_{i}(\tilde{x}^{s-1})-\nabla f(x_t^s)+\nabla f(\tilde{x}^{s-1})$. Based on line 8 in Algorithm \ref{alg2}, taking the expectation $\mathbb{E}_{0,s}$, we have
				\[\begin{aligned}
					&\mathbb{E}_{0,s}\|v_t^s-\nabla f(x_t^s)\|^2\\
					&=\mathbb{E}_{0,s}\|\nabla f_{\mathcal{I}}(x_t^s)-\nabla f_{\mathcal{I}}(\tilde{x}^{s-1})-\nabla f(x_t^s)+\nabla f(\tilde{x}^{s-1})+g^s-\nabla f(\tilde{x}^{s-1})\|^2\\
					&=\mathbb{E}_{0,s}\|\nabla f_{\mathcal{I}}(x_t^s)-\nabla f_{\mathcal{I}}(\tilde{x}^{s-1})-\nabla f(x_t^s)+\nabla f(\tilde{x}^{s-1})\|^2+\mathbb{E}_{0,s}\|g^s-\nabla f(\tilde{x}^{s-1})\|^2\\
					&\quad+2\mathbb{E}_{0,s}\langle \nabla f_{\mathcal{I}}(x_t^s)-\nabla f_{\mathcal{I}}(\tilde{x}^{s-1})-\nabla f(x_t^s)+\nabla f(\tilde{x}^{s-1}),g^s-\nabla f(\tilde{x}^{s-1})\rangle\\
					&\overset{(i)}{=}\mathbb{E}_{0,s}\|\nabla f_{\mathcal{I}}(x_t^s)-\nabla f_{\mathcal{I}}(\tilde{x}^{s-1})-\nabla f(x_t^s)+\nabla f(\tilde{x}^{s-1})\|^2+\mathbb{E}_{0,s}\|g^s-\nabla f(\tilde{x}^{s-1})\|^2\\
					&=\frac{1}{b^2}\sum_{i\in\mathcal{I}}\mathbb{E}_{0,s}\|F_i\|^2+\frac{2}{b^2}\sum_{i<j,i,j\in\mathcal{I}}\mathbb{E}_{0,s}\langle F_i, F_j \rangle+\mathbb{E}_{0,s}\|g^s-\nabla f(\tilde{x}^{s-1})\|^2\\
					&\overset{(ii)}{=}\frac{1}{b}\mathbb{E}_{0,s}\|F_i\|^2+\mathbb{E}_{0,s}\|g^s-\nabla f(\tilde{x}^{s-1})\|^2\\
					&\overset{(iii)}{\leq}\frac{1}{b}\mathbb{E}_{0,s}\|\nabla f_{i}(x_t^s)-\nabla f_{i}(\tilde{x}^{s-1})\|^2+\mathbb{E}_{0,s}\|g^s-\nabla f(\tilde{x}^{s-1})\|^2\\
					&\overset{(iv)}{\leq}\frac{L^2}{b} \mathbb{E}_{0,s}\|x_t^s-\tilde{x}^{s-1}\|^2+\frac{I_{(N_s<n)}}{N_s}\sigma^2,
				\end{aligned}\]
				where $(i)$ follows from the fact that 
				\[\mathbb{E}_{x_1^s,\dots,x_t^s}\mathbb{E}_{t,s}\langle \nabla f_{\mathcal{I}}(x_t^s)-\nabla f_{\mathcal{I}}(\tilde{x}^{s-1})-\nabla f(x_t^s)+\nabla f(\tilde{x}^{s-1}),g^s-\nabla f(\tilde{x}^{s-1})\rangle=0;\]
				$(ii)$ follows from the fact that 
				\[\mathbb{E}_{0,s}\langle F_i, F_j \rangle=\mathbb{E}_{x_1^s,\dots,x_t^s}\mathbb{E}_{t,s}\langle F_i, F_j \rangle=\mathbb{E}_{x_1^s,\dots,x_t^s}\langle \mathbb{E}_{t,s}F_i, \mathbb{E}_{t,s}F_j \rangle=0;\]
				$(iii)$ follows from the fact that $\mathbb{E}\|x-\mathbb{E}x\|^2\leq \mathbb{E}\|x\|^2$; $(iv)$ follows by Assumptions \ref{nograb} and \ref{Lsm}, Lemma B.2 in \cite{LJCJ17} and the fact that $N_s\leq\lceil N_s\rceil$ is fixed given $x_0^1,\dots,x_0^s$.
		\end{proof}
\begin{lemma}\label{SM2bdual}
			Under Assumptions \ref{nograb}-\ref{fucor} and given the sequence $\{w_t^s\}$ from Algorithm \ref{alg2}, it holds that
			\[\begin{aligned}
				\mathbb{E}_{0,s}\|\lambda_{t+1}^s-\lambda_t^s\|^2\leq &\frac{5}{\varsigma_A}\mathbb{E}_{0,s}\|v_t^s-\nabla f(x_t^s)\|^2+\frac{5}{\varsigma_A}\mathbb{E}_{0,s}\|\nabla f(x_{t-1}^s)-v_{t-1}^s\|^2\\
				&+\frac{5\zeta_{\max}^2}{\varsigma_A\eta^2}\mathbb{E}_{0,s}\|x_{t+1}^s-x_t^s\|^2+\frac{5}{\varsigma_A}(\frac{\zeta_{\max}^2}{\eta^2}+L^2)\mathbb{E}_{0,s}\|x_t^s-x_{t-1}^s\|^2.
			\end{aligned}\]
		\end{lemma}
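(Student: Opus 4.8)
The plan is to transcribe the argument of Lemma \ref{SM1bdual} almost verbatim, replacing the minibatch gradient $\nabla f_{\mathcal{I}_k}(x_k)$ by the SVRG estimator $v_t^s$ and attaching the epoch superscript $s$ to every quantity. First I would write down the first-order optimality condition of the $x$-subproblem on line 10 of Algorithm \ref{alg2}: since $x_{t+1}^s$ minimizes $\tilde{\mathcal{L}}_\beta(x,y_{t+1}^s,\lambda_t^s,x_t^s,v_t^s)$,
\[
v_t^s-A^\top\lambda_t^s+\beta A^\top(Ax_{t+1}^s+By_{t+1}^s-c)+\frac{G}{\eta}(x_{t+1}^s-x_t^s)=0.
\]
Substituting the dual update $\lambda_{t+1}^s=\lambda_t^s-\beta(Ax_{t+1}^s+By_{t+1}^s-c)$ from line 11 produces the key identity $A^\top\lambda_{t+1}^s=v_t^s+\frac{G}{\eta}(x_{t+1}^s-x_t^s)$, the exact analogue of \eqref{SM1Alamk1}. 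Applying the same identity one step earlier gives $A^\top\lambda_t^s=v_{t-1}^s+\frac{G}{\eta}(x_t^s-x_{t-1}^s)$, where for $t=0$ the quantities $x_{-1}^s$, $v_{-1}^s$, $\lambda_0^s$ are identified with the last inner iterate, estimator, and multiplier of epoch $s-1$ by the carry-over on line 13.

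Subtracting the two identities and invoking Assumption \ref{fucor}, so that $(A^\top)^+=A(A^\top A)^{-1}$ and $\zeta_{\max}(((A^\top)^+)^\top(A^\top)^+)=1/\varsigma_A$, I obtain the pathwise bound
\[
\|\lambda_{t+1}^s-\lambda_t^s\|^2\leq\frac{1}{\varsigma_A}\Big\|v_t^s-v_{t-1}^s+\frac{G}{\eta}(x_{t+1}^s-x_t^s)-\frac{G}{\eta}(x_t^s-x_{t-1}^s)\Big\|^2.
\]
Then I would insert $\pm\nabla f(x_t^s)$ and $\pm\nabla f(x_{t-1}^s)$ to split $v_t^s-v_{t-1}^s=(v_t^s-\nabla f(x_t^s))+(\nabla f(x_t^s)-\nabla f(x_{t-1}^s))+(\nabla f(x_{t-1}^s)-v_{t-1}^s)$, so the right-hand side is $\frac{1}{\varsigma_A}$ times the squared norm of a sum of five vectors. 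Applying $\|\sum_{i=1}^5 a_i\|^2\leq 5\sum_{i=1}^5\|a_i\|^2$, bounding $\|\nabla f(x_t^s)-\nabla f(x_{t-1}^s)\|^2\leq L^2\|x_t^s-x_{t-1}^s\|^2$ via Assumption \ref{Lsm}, and using $\|\frac{G}{\eta}z\|^2\leq\frac{\zeta_{\max}^2}{\eta^2}\|z\|^2$, I collect the two coefficients of $\|x_t^s-x_{t-1}^s\|^2$ into $\frac{5}{\varsigma_A}(\frac{\zeta_{\max}^2}{\eta^2}+L^2)$. Taking the conditional expectation $\mathbb{E}_{0,s}$ of both sides — which is legitimate since the preceding chain is deterministic — delivers the stated inequality.

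I expect no genuine obstacle here: the whole derivation is a line-by-line copy of Lemma \ref{SM1bdual} under the correspondence $\nabla f_{\mathcal{I}_k}(x_k)\leftrightarrow v_t^s$. The only point requiring care is the epoch boundary $t=0$, where $x_{t-1}^s$ and $v_{t-1}^s$ must be correctly read off as the terminal quantities of epoch $s-1$; once that bookkeeping is fixed, the recursion $A^\top\lambda_t^s=v_{t-1}^s+\frac{G}{\eta}(x_t^s-x_{t-1}^s)$ holds uniformly in $t$ and the rest is routine. Note also that, in contrast with the use of Lemma \ref{SM1bdual} inside the merit-function analysis, I would \emph{not} yet substitute the variance estimate of Lemma \ref{SM2SG} for $\mathbb{E}_{0,s}\|v_t^s-\nabla f(x_t^s)\|^2$ and $\mathbb{E}_{0,s}\|\nabla f(x_{t-1}^s)-v_{t-1}^s\|^2$; leaving the bound in terms of the estimator errors is exactly the form needed later in Lemma \ref{SM2MFD}.
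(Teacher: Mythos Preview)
Your proposal is correct and matches the paper's proof essentially line for line: the paper derives $A^\top\lambda_{t+1}^s=v_t^s+\frac{G}{\eta}(x_{t+1}^s-x_t^s)$ from the optimality condition and dual update, then explicitly invokes ``similar to \eqref{SM1lamk1k}'' to obtain the pathwise five-term bound before taking $\mathbb{E}_{0,s}$. Your extra remark about the epoch-boundary bookkeeping at $t=0$ is a nice clarification the paper leaves implicit.
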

		\begin{proof}
			Using the optimal condition of the step 10 and the step 11 in Algorithm \ref{alg2}, similar to \eqref{SM1Alamk1}, we have
			\begin{equation}\label{SM2Alamk1}
				A^\top\lambda_{t+1}^s=v_t^s+\frac{G}{\eta}(x_{t+1}^s-x_t^s).
			\end{equation}
			and
				\[\lambda_{t+1}^s=(A^\top)^+(v_t^s+\frac{G}{\eta}(x_{t+1}^s-x_t^s)).\]
			By Assumption \ref{fucor}, similar to \eqref{SM1lamk1k}, we have
			\[\begin{aligned}
				\|\lambda_{t+1}^s-\lambda_t^s\|^2\leq &\frac{5}{\varsigma_A}\|v_t^s-\nabla f(x_t^s)\|^2+\frac{5}{\varsigma_A}\|\nabla f(x_{t-1}^s)-v_{t-1}^s\|^2\\
				&+\frac{5\zeta_{\max}^2}{\varsigma_A\eta^2}\|x_{t+1}^s-x_t^s\|^2+\frac{5}{\varsigma_A}(\frac{\zeta_{\max}^2}{\eta^2}+L^2)\|x_t^s-x_{t-1}^s\|^2
			\end{aligned}\]
			Taking expectation $\mathbb{E}_{0,s}(\cdot)$ over the above inequality, we get the result.
		\end{proof}

	\begin{lemma}\label{SM2MFD}
			Under Assumptions \ref{nograb}-\ref{fucor}, suppose the sequence $\{w_t^s\}$ is generated from Algorithm \ref{alg2}, and define a merit function $\phi_k$ as follows:
			\[
			\phi_t^s=\mathbb{E}[\mathcal{L}_\beta(w_t^s)+\frac{5}{\beta\varsigma_A}(\frac{\zeta_{\max}^2}{\eta^2}+L^2)\|x_t^s-x_{t-1}^s\|^2+\frac{5L^2}{b\beta\varsigma_A}\|x_{t-1}^s-\tilde{x}^{s-1}\|^2+\delta_t\|x_t^s-\tilde{x}^{s-1}\|^2],
			\]
			where the positive sequence $\{\delta_t\}$ satisfies $\delta_t=\frac{L^2}{2b}+\frac{10L^2}{b\beta\varsigma_A}+(1+\rho)\delta_{t+1}$
			with $\rho>0$. Let $\epsilon >0$ and $c_\tau,c_\epsilon>0$. Choose the parameters $\tau_1,c_\tau,\beta,\eta,b>0$ used in Algorithm \ref{alg2} such that $\tau_1\leq \epsilon S$ and $\Omega_t>0$. Then, we have
			\[
			\frac{1}{ST}\sum_{s=1}^{S}\sum_{t=0}^{T-1}(\Omega_t\mathbb{E}\|x_{t+1}^s-x_t^s\|^2+\frac{L^2}{2b}\|x_t^s-\tilde{x}^{s-1}\|^2)\leq \frac{\phi_0^1-\phi^*}{ST}+(\frac{1}{2}+\frac{10}{\beta\varsigma_A})(\frac{1}{c_\tau}+\frac{1}{c_\epsilon})\epsilon,
			\]
			where $\phi^*$ denotes a lower bound of $\phi_t^s$ and $\Omega_t=\frac{\zeta_{\min}}{\eta}+\frac{\beta\varsigma_A}{2}-\frac{1}{2c_\tau}-\frac{L+1}{2}-\frac{5}{\beta \varsigma_A}(L^2+\frac{2}{c_\tau}+\frac{2\zeta_{\max}^2}{\eta^2})-(1+\frac{1}{\rho})\delta_{t+1}$.
		\end{lemma}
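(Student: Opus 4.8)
The plan is to reuse the three--move argument behind Lemma~\ref{SM1MFD}, the new features being the snapshot--distance quantity $\|x_t^s-\tilde x^{s-1}\|^2$ carried inside the merit function and the telescoping over the double index $(s,t)$, with the per--epoch batch rule $N_s$ playing the role of $M_k$. First I would derive a one inner--step decrease of $\mathcal{L}_\beta$ exactly as in \eqref{SM1Lxk1k}--\eqref{SM1Llamk1k}: the $y$--minimization gives the monotonicity $\mathcal{L}_\beta(x_t^s,y_{t+1}^s,\lambda_t^s)\le\mathcal{L}_\beta(x_t^s,y_t^s,\lambda_t^s)$; the $x$--optimality \eqref{SM2Alamk1} together with $L$--smoothness, $\|\cdot\|_G^2\ge\zeta_{\min}\|\cdot\|^2$ and $\|A\cdot\|^2\ge\varsigma_A\|\cdot\|^2$ gives
\[
\mathcal{L}_\beta(x_{t+1}^s,y_{t+1}^s,\lambda_t^s)-\mathcal{L}_\beta(x_t^s,y_{t+1}^s,\lambda_t^s)\le\tfrac12\|\nabla f(x_t^s)-v_t^s\|^2-\Big(\tfrac{\zeta_{\min}}{\eta}+\tfrac{\beta\varsigma_A}{2}-\tfrac{L+1}{2}\Big)\|x_{t+1}^s-x_t^s\|^2 ;
\]
and the $\lambda$--update contributes the exact $\tfrac1\beta\|\lambda_{t+1}^s-\lambda_t^s\|^2$. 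Adding the three, substituting Lemma~\ref{SM2bdual} for $\tfrac1\beta\|\lambda_{t+1}^s-\lambda_t^s\|^2$ and Lemma~\ref{SM2SG} (at indices $t$ and $t-1$) for the variance terms $\|v_t^s-\nabla f(x_t^s)\|^2$ and $\|v_{t-1}^s-\nabla f(x_{t-1}^s)\|^2$, and then taking $\mathbb{E}_{0,s}$ and full expectation, I obtain a one--step inequality whose $\|x_{t+1}^s-x_t^s\|^2$ term is $-P\,\mathbb{E}\|x_{t+1}^s-x_t^s\|^2$ with $P:=\tfrac{\zeta_{\min}}{\eta}+\tfrac{\beta\varsigma_A}{2}-\tfrac{L+1}{2}-\tfrac{5\zeta_{\max}^2}{\beta\varsigma_A\eta^2}$, whose coefficients on $\mathbb{E}\|x_t^s-x_{t-1}^s\|^2$ and $\mathbb{E}\|x_{t-1}^s-\tilde x^{s-1}\|^2$ are exactly the two fixed weights $\tfrac5{\beta\varsigma_A}(\tfrac{\zeta_{\max}^2}{\eta^2}+L^2)$ and $\tfrac{5L^2}{b\beta\varsigma_A}$ of $\phi_t^s$, whose coefficient on $\mathbb{E}\|x_t^s-\tilde x^{s-1}\|^2$ is a constant multiple of $L^2/b$, and with a residual noise term $(\tfrac12+\tfrac{10}{\beta\varsigma_A})\tfrac{I_{(N_s<n)}}{N_s}\sigma^2$.

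Next I would form $\phi_{t+1}^s-\phi_t^s$ from this one--step bound. The two fixed weights of $\phi_t^s$ are chosen so that the $\|x_t^s-x_{t-1}^s\|^2$ and $\|x_{t-1}^s-\tilde x^{s-1}\|^2$ contributions cancel, while the floating term $\delta_{t+1}\|x_{t+1}^s-\tilde x^{s-1}\|^2$ is split by Young's inequality, $\|x_{t+1}^s-\tilde x^{s-1}\|^2\le(1+\tfrac1\rho)\|x_{t+1}^s-x_t^s\|^2+(1+\rho)\|x_t^s-\tilde x^{s-1}\|^2$: the $(1+\rho)\delta_{t+1}$ piece is absorbed by the defining recursion $\delta_t=\tfrac{L^2}{2b}+\tfrac{10L^2}{b\beta\varsigma_A}+(1+\rho)\delta_{t+1}$, and the $(1+\tfrac1\rho)\delta_{t+1}$ piece is charged to the coefficient of $\|x_{t+1}^s-x_t^s\|^2$. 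What survives is $\phi_{t+1}^s-\phi_t^s\le-\big(\Omega_t+\tfrac1{c_\tau}(\tfrac12+\tfrac{10}{\beta\varsigma_A})\big)\mathbb{E}\|x_{t+1}^s-x_t^s\|^2-\tfrac{L^2}{2b}\mathbb{E}\|x_t^s-\tilde x^{s-1}\|^2+(\tfrac12+\tfrac{10}{\beta\varsigma_A})\tfrac{I_{(N_s<n)}}{N_s}\sigma^2$, where the extra $\tfrac1{c_\tau}(\tfrac12+\tfrac{10}{\beta\varsigma_A})$ beyond the stated $\Omega_t$ is slack held in reserve for the noise estimate.

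Finally I would telescope over $t=0,\dots,T-1$ and then over $s=1,\dots,S$. At $t=0$, $x_0^s=\tilde x^{s-1}$ kills the left snapshot term, and with the reset convention $x_{-1}^{s+1}=\tilde x^s=x_0^{s+1}$ one gets $\phi_0^{s+1}\le\phi_T^s$, so the $\phi$--differences collapse to at most $\phi_0^1-\phi^*$; here $\phi^*$ is a lower bound of $\{\phi_t^s\}$ obtained as in \eqref{MFLB}, using $A^\top\lambda_{t+1}^s=v_t^s+\tfrac G\eta(x_{t+1}^s-x_t^s)$, $\zeta_{\max}(((A^\top)^+)^\top(A^\top)^+)=\varsigma_A^{-1}$, Assumptions~\ref{lb}--\ref{grab} and Lemma~\ref{SM2SG}. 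For the accumulated noise $\sum_{s=1}^{S}T(\tfrac12+\tfrac{10}{\beta\varsigma_A})\tfrac{I_{(N_s<n)}}{N_s}\sigma^2$ I would use that $N_s<n$ forces $\tfrac{\sigma^2}{N_s}=\max\{\tfrac{\tau_s}{c_\tau},\tfrac\epsilon{c_\epsilon}\}\le\tfrac{\tau_s}{c_\tau}+\tfrac\epsilon{c_\epsilon}$, together with $\tau_s=\tfrac1T\sum_{t=0}^{T-1}\|x_{t+1}^{s-1}-x_t^{s-1}\|^2$ for $s\ge2$ and the hypothesis $\tau_1\le\epsilon S$, so that $\sum_{s}\tfrac{\sigma^2}{N_s}\le\tfrac1{c_\tau}\big(\epsilon S+\tfrac1T\sum_{s,t}\|x_{t+1}^s-x_t^s\|^2\big)+\tfrac{S\epsilon}{c_\epsilon}$. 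The ``recycled difference'' term $\tfrac1{c_\tau}(\tfrac12+\tfrac{10}{\beta\varsigma_A})\sum_{s,t}\|x_{t+1}^s-x_t^s\|^2$ is then absorbed into the $\|x_{t+1}^s-x_t^s\|^2$ terms on the left using exactly the slack reserved above, which reduces their coefficient to $\Omega_t$ and leaves $ST(\tfrac12+\tfrac{10}{\beta\varsigma_A})(\tfrac1{c_\tau}+\tfrac1{c_\epsilon})\epsilon$ on the right; dividing through by $K=ST$ gives the asserted inequality.

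The main obstacle is the constant bookkeeping in the last two steps: calibrating the two fixed weights of $\phi_t^s$ and the $\delta_t$--recursion so that every $\|x_t^s-x_{t-1}^s\|^2$ and $\|x_{\cdot}^s-\tilde x^{s-1}\|^2$ cross--term telescopes or cancels and a clean $-\Omega_t\|x_{t+1}^s-x_t^s\|^2-\tfrac{L^2}{2b}\|x_t^s-\tilde x^{s-1}\|^2$ survives, while leaving in the coefficient of $\|x_{t+1}^s-x_t^s\|^2$ precisely $\tfrac1{c_\tau}(\tfrac12+\tfrac{10}{\beta\varsigma_A})$ of slack to soak up the differences recycled through the batch rule $N_s$; the three--way coupling between the merit function, the $\delta_t$--sequence and the adaptive rule is what makes this more delicate than the static case analysed in Lemma~\ref{SM1MFD}. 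A secondary nuisance is the epoch boundary: at $t=0$ the ``$v_{-1}^s$'' and ``$x_{-1}^s$'' implicit in Lemma~\ref{SM2bdual} are $v_{T-1}^{s-1}$ and $x_{T-1}^{s-1}$, so the cross--epoch term $\|x_0^s-x_{T-1}^{s-1}\|^2=\|x_T^{s-1}-x_{T-1}^{s-1}\|^2$ (using $x_0^s=\tilde x^{s-1}=x_T^{s-1}$) must be folded back into the previous epoch's sum — routine, but easy to mishandle.
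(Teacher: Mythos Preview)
Your skeleton is correct and tracks the paper closely through the one--step decrease of $\mathcal{L}_\beta$, the insertion of Lemmas~\ref{SM2SG} and~\ref{SM2bdual}, the Young split of $\delta_{t+1}\|x_{t+1}^s-\tilde x^{s-1}\|^2$, and the ``reserved slack'' $\tfrac{1}{c_\tau}(\tfrac12+\tfrac{10}{\beta\varsigma_A})$ that later absorbs the $\tau_s$--recycling. Two points deserve correction, however.

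\emph{Epoch boundary.} Your use of the bare inequality $\phi_0^{s+1}\le\phi_T^s$ is too coarse. At $t=0$ the cross--epoch contributions you list (and one you omit, namely $\tfrac{5L^2}{b\beta\varsigma_A}\|x_{T-1}^{s-1}-\tilde x^{s-2}\|^2$ coming from Lemma~\ref{SM2SG} applied to $v_{T-1}^{s-1}$, which involves the \emph{previous} snapshot $\tilde x^{s-2}$, not $\tilde x^{s-1}$) have no counterpart on the left once $\phi_T^{s-1}$ has been discarded via an inequality; ``folding back'' into the previous epoch's $\|x_{t+1}-x_t\|^2$ sum would then inflate the $t=T-1$ coefficient beyond $\Omega_{T-1}$. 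What actually makes this work is the \emph{equality} $\phi_0^{s+1}=\mathbb{E}\mathcal{L}_\beta(w_T^s)$, so that $\phi_T^s-\phi_0^{s+1}$ is exactly the three weighted terms stored in $\phi_T^s$, and these cancel the cross--epoch pieces produced at the next $t=0$ step. The paper does not phrase it this way: it bypasses $\phi_0^{s+1}$ entirely and derives a separate explicit bound $\phi_1^{s+1}\le\phi_T^s-\chi_T\|x_1^{s+1}-x_0^{s+1}\|^2-\tfrac{L^2}{2b}\|x_T^s-\tilde x^{s-1}\|^2+\ldots$, re--computing the $\lambda$--difference at the boundary using $v_0^{s+1}=\nabla f_{\mathcal N_{s+1}}(x_T^s)$. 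This is the bulk of the paper's proof, and is not ``routine''; your proposal and the paper's are equivalent once done carefully, but the step deserves more than a side remark.

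\emph{Lower bound.} The paper does \emph{not} reuse the argument behind \eqref{MFLB} here; instead it writes $\mathcal{L}_\beta(w_t^s)\ge f+g-\tfrac{1}{2\beta}\|\lambda_{t-1}^s\|^2+\tfrac{1}{2\beta}\|\lambda_t^s\|^2$ and telescopes the $\|\lambda\|^2$ terms, avoiding Assumption~\ref{grab}. Your MFLB--style route requires Assumption~\ref{grab}, which is not among the hypotheses of the lemma.
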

		
		\begin{proof}
			From the definition of the ALF $\mathcal{L}_\beta$, it follows that
			\begin{align}
				&\mathcal{L}_\beta(x_{t+1}^s,y_{t+1}^s,\lambda_t^s)-\mathcal{L}_\beta(x_t^s,y_{t+1}^s,\lambda_t^s)\nonumber\\
				= & f(x_{t+1}^s)-f(x_t^s)-\langle\lambda_t^s, A(x_{t+1}^s-x_t^s)\rangle+\frac{\beta}{2}\|Ax_{t+1}^s+By_{t+1}^s-c\|^2-\frac{\beta}{2}\|Ax_t^s+By_{t+1}^s-c\|^2 \nonumber\\
				=& f(x_{t+1}^s)-f(x_t^s)-\langle\lambda_t^s-\beta(Ax_{t+1}^s+By_{t+1}^s-c), A(x_{t+1}^s-x_t^s)\rangle-\frac{\beta}{2}\|A(x_{t+1}^s-x_t^s)\|^2\nonumber\\
				=& f(x_{t+1}^s)-f(x_t^s)-\langle A^\top\lambda_{t+1}^s, x_{t+1}^s-x_t^s\rangle-\frac{\beta}{2}\|A(x_{t+1}^s-x_t^s)\|^2\nonumber\\
				\overset{(i)}{=}&f(x_{t+1}^s)-f(t_k)^s-\langle v_t^s+\frac{G}{\eta}(x_{t+1}^s-x_t^s), x_{t+1}^s-x_t^s\rangle-\frac{\beta}{2}\|A(x_{t+1}^s-x_t^s)\|^2\nonumber\\
				\overset{(ii)}{\leq} &\langle \nabla f(x_t^s)-v_t^s, x_{t+1}^s-x_t^s\rangle+\frac{L}{2}\|x_{t+1}^s-x_t^s\|^2-\frac{1}{\eta}\|x_{t+1}^s-x_t^s\|_G^2-\frac{\beta}{2}\|A(x_{t+1}^s-x_t^s)\|^2\nonumber\\
				\leq &\frac{1}{2}\|\nabla f(x_t^s)-v_t^s\|^2-(\frac{\zeta_{\min}}{\eta}+\frac{\beta\varsigma_A}{2}-\frac{L+1}{2})\|x_{t+1}^s-x_t^s\|^2,\label{SM2Lxk1k}
			\end{align}
			where $(i)$ holds by \eqref{SM2Alamk1} and $(ii)$ holds by Assumption \ref{Lsm}.
			Since $y_{k+1}$ is a minimizer of step 9 in Algorithm \ref{alg2}, we have
			\begin{equation}\label{SM2Lyk1k}
				\mathcal{L}_\beta(x_t^s,y_{t+1}^s,\lambda_t^s)
				\leq\mathcal{L}_\beta(x_t^s,y_t^s,\lambda_t^s).
			\end{equation}
			Using the step 11 in Algorithm \ref{alg2}, we have
			\begin{equation}\label{SM2Llamk1k}
				\mathcal{L}_\beta(x_{t+1}^s,y_{t+1}^s,\lambda_{t+1}^s)-\mathcal{L}_\beta(x_{t+1}^s,y_{t+1}^s,\lambda_t^s)=\frac{1}{\beta}\|\lambda_{t+1}^s-\lambda_t^s\|^2.
			\end{equation}
			Combining \eqref{SM2Lxk1k}-\eqref{SM2Llamk1k}, we have
			\[\begin{aligned}
				&\mathcal{L}_\beta(w_{t+1}^s)-\mathcal{L}_\beta(w_t^s)\\
				\leq & \frac{1}{2}\|\nabla f(x_t^s)-v_t^s\|^2-(\frac{\zeta_{\min}}{\eta}+\frac{\beta\varsigma_A}{2}-\frac{L+1}{2})\|x_{t+1}^s-x_t^s\|^2+\frac{1}{\beta}\|\lambda_{t+1}^s-\lambda_t^s\|^2.
			\end{aligned}\]
			Then, taking expectation $\mathbb{E}_{0,s}(\cdot)$ over the above inequality yields
			\[\begin{aligned}
				\mathbb{E}_{0,s}\mathcal{L}_\beta(w_{t+1}^s)\leq& \mathbb{E}_{0,s}\mathcal{L}_\beta(w_t^s)+\frac{1}{2}\mathbb{E}_{0,s}\|\nabla f(x_t^s)-v_t^s\|^2\\
				&-(\frac{\zeta_{\min}}{\eta}+\frac{\beta\varsigma_A}{2}-\frac{L+1}{2})\mathbb{E}_{0,s}\|x_{t+1}^s-x_t^s\|^2+\frac{1}{\beta}\mathbb{E}_{0,s}\|\lambda_{t+1}^s-\lambda_t^s\|^2.
			\end{aligned}\]
			Combining Lemmas \ref{SM2SG} and \ref{SM2bdual}, we have
			\begin{align}
				&\mathbb{E}_{0,s}\mathcal{L}_\beta(w_{t+1}^s)-\mathbb{E}_{0,s}\mathcal{L}_\beta(w_t^s)\nonumber\\
				\leq & (\frac{L^2}{2b}+\frac{5L^2}{b\beta\varsigma_A})\mathbb{E}_{0,s}\|x_t^s-\tilde{x}^{s-1}\|^2+\frac{\sigma^2 I_{(N_s<n)}}{N_s}(\frac{1}{2}+\frac{10}{\beta\varsigma_A})\nonumber\\
				&+\frac{5L^2}{b\beta\varsigma_A}\mathbb{E}_{0,s}\|x_{t-1}^s-\tilde{x}^{s-1}\|^2+\frac{5}{\beta\varsigma_A}(\frac{\zeta_{\max}^2}{\eta^2}+L^2)\mathbb{E}_{0,s}\|x_t^s-x_{t-1}^s\|^2\nonumber\\
				&-(\frac{\zeta_{\min}}{\eta}+\frac{\beta\varsigma_A}{2}-\frac{L+1}{2}-\frac{5\zeta_{\max}^2}{\beta \varsigma_A\eta^2})\mathbb{E}_{0,s}\|x_{t+1}^s-x_t^s\|^2.\label{SM2wst1st}
			\end{align}
			Then, we bound $\mathbb{E}_{0,s}\|x_{t+1}^s-\tilde{x}^{s-1}\|^2$ as
			\begin{align}
				&\mathbb{E}_{0,s}\|x_{t+1}^s-\tilde{x}^{s-1}\|^2=\mathbb{E}_{0,s}[\|x_{t+1}^s-x_t^s\|^2+2\langle x_{t+1}^s-x_t^s,x_t^s-\tilde{x}^{s-1}\rangle+\|x_t^s-\tilde{x}^{s-1}\|^2]\nonumber\\
				&\leq\mathbb{E}_{0,s}[\|x_{t+1}^s-x_t^s\|^2+2( \frac{1}{2\rho}\|x_{t+1}^s-x_t^s\|^2+\frac{\rho}{2}\|x_t^s-\tilde{x}^{s-1}\|^2)+\|x_t^s-\tilde{x}^{s-1}\|^2]\nonumber\\
				&=(1+\frac{1}{\rho})\mathbb{E}_{0,s}\|x_{t+1}^s-x_t^s\|^2+(1+\rho)\mathbb{E}_{0,s}\|x_t^s-\tilde{x}^{s-1}\|^2,\label{SM2xst1tx}
			\end{align}
			where the inequality is due to the Cauchy-Schwarz inequality with $\rho>0$. Recall that $N_s=\min\{c_\tau\sigma^2\tau_s^{-1},c_\epsilon\sigma^2\epsilon^{-1},n\}$ and $c_\tau,c_\epsilon>0$, it has
			\begin{equation}\label{SM2NS}
				\frac{\sigma^2 I_{(N_s<n)}}{N_s}\leq \frac{\sigma^2}{\min\{c_\tau\tau_s^{-1},c_\epsilon\epsilon^{-1}\}}=\max\left\{\frac{\tau_s}{c_\tau},\frac{\epsilon}{c_\epsilon}\right\}\leq\frac{\tau_s}{c_\tau}+\frac{\epsilon}{c_\epsilon}.
			\end{equation}
			
			Define the following merit function,
			\[
			\phi_t^s=\mathbb{E}[\mathcal{L}_\beta(w_t^s)+\frac{5}{\beta\varsigma_A}(\frac{\zeta_{\max}^2}{\eta^2}+L^2)\|x_t^s-x_{t-1}^s\|^2+\frac{5L^2}{b\beta\varsigma_A}\|x_{t-1}^s-\tilde{x}^{s-1}\|^2+\delta_t\|x_t^s-\tilde{x}^{s-1}\|^2],
			\]
			where $\delta_t=\frac{L^2}{b}+\frac{10L^2}{b\beta\varsigma_A}+(1+\rho)\delta_{t+1}$. Combining \eqref{SM2wst1st}-\eqref{SM2NS} and taking the expectation over $x_0^1,\dots,x_0^s$, we have
			\begin{equation}\label{SM2MF1}
				\phi_{t+1}^s-\phi_t^s
				\leq-\chi_t\mathbb{E}\|x_{t+1}^s-x_t^s\|^2-\frac{L^2}{2b}\mathbb{E}\|x_t^s-\tilde{x}^{s-1}\|^2+(\frac{1}{2}+\frac{10}{\beta\varsigma_A})(\frac{\tau_s}{c_\tau}+\frac{\epsilon}{c_\epsilon}).
			\end{equation}
			where $\chi_t=\frac{\zeta_{\min}}{\eta}+\frac{\beta\varsigma_A}{2}-\frac{L+1}{2}-\frac{10\zeta_{\max}^2}{\beta \varsigma_A\eta^2}-\frac{5L^2}{\beta \varsigma_A}-(1+\frac{1}{\rho})\delta_{t+1}$.
			
			Next, we prove the relationship between $\phi_1^{s+1}$ and $\phi_T^s$. Since $\tilde{x}^s=x_0^{s+1}=x^s_T$, we have
			\begin{equation}\label{SM2vs10}
				v_0^{s+1}=\nabla f_{\mathcal{I}}(x_0^{s+1})-\nabla f_{\mathcal{I}}(\tilde{x}^s)+\nabla f_{\mathcal{N}_{s+1}} (\tilde{x}^s)=\nabla f_{\mathcal{N}_{s+1}} (x_0^{s+1})=\nabla f_{\mathcal{N}_{s+1}} (x^s_T).
			\end{equation}
			By \eqref{SM2Alamk1} and $\lambda^{s+1}_0=\lambda^s_T$, similar to Lemma \ref{SM2bdual}, we obtain
			\[\begin{aligned}
				\|\lambda_1^{s+1}-\lambda_0^{s+1}\|^2
				&\leq \frac{1}{\varsigma_A}\|v_{T-1}^s+\frac{G}{\eta}(x_T^s-x_{T-1}^s)-v_0^{s+1}-\frac{G}{\eta}(x_1^{s+1}-x_0^{s+1})\|^2\\
				&= \frac{1}{\varsigma_A}\|v_{T-1}^s-\nabla f(x_{T-1}^s)+\nabla f(x_{T-1}^s)-\nabla f(x_T^s)\\
				& \quad+\nabla f(x_T^s)-\nabla f_{\mathcal{N}_{s+1}} (x^s_T)+\frac{G}{\eta}(x_T^s-x_{T-1}^s)-\frac{G}{\eta}(x_1^{s+1}-x_0^{s+1})\|^2\\
				&\leq \frac{5}{\varsigma_A}\|v_{T-1}^s-\nabla f(x_{T-1}^s)\|^2+\frac{5}{\varsigma_A}\|\nabla f(x_T^s)-\nabla f_{\mathcal{N}_{s+1}} (x^s_T)\|^2\\
				&\quad+\frac{5\zeta_{\max}^2}{\varsigma_A\eta^2}\|x_1^{s+1}-x_0^{s+1}\|^2+\frac{5}{\varsigma_A}(\frac{\zeta_{\max}^2}{\eta^2}+L^2)\|x_T^s-x_{T-1}^s\|^2,
			\end{aligned}\]
			Taking $\mathbb{E}_{0,s}(\cdot)$ over the above inequality yields,
			\begin{align}
				\mathbb{E}_{0,s}\|\lambda_1^{s+1}-\lambda_0^{s+1}\|^2 \leq& \frac{5L^2}{b \varsigma_A}\mathbb{E}_{0,s}\|x_{T-1}^s-\tilde{x}^{s-1}\|^2+\frac{5\sigma^2}{ \varsigma_A}(\frac{I_{(N_s<n)}}{N_s}+\frac{1}{N_{s+1}})\nonumber\\
				&+\frac{5\zeta_{\max}^2}{\varsigma_A\eta^2}\mathbb{E}_{0,s}\|x_1^{s+1}-x_0^{s+1}\|^2+\frac{5}{\varsigma_A}(\frac{\zeta_{\max}^2}{\eta^2}+L^2)\mathbb{E}_{0,s}\|x_T^s-x_{T-1}^s\|^2,\label{SM2lams110}
			\end{align}
			where the inequality follows by Lemma \ref{SM2SG}, Assumptions \ref{nograb} and \ref{Lsm}.
			Since $\tilde{x}^s=x_0^{s+1}=x^s_T$, $y^{s+1}_0=y^s_T$ and $\lambda^{s+1}_0=\lambda^s_T$, using \eqref{SM2Lyk1k}, we have
			\begin{equation}\label{SM2Lyk1ks}
				\mathbb{E}_{0,s}[\mathcal{L}_\beta(x_0^{s+1},y_1^{s+1},\lambda_0^{s+1})]
				\leq\mathbb{E}_{0,s}[\mathcal{L}_\beta(x_0^{s+1},y_0^{s+1},\lambda_0^{s+1})]=\mathbb{E}_{0,s}[\mathcal{L}_\beta(x_T^s,y_T^s,\lambda_T^s)].
			\end{equation}
			By \eqref{SM2Lxk1k}, \eqref{SM2vs10}, \eqref{SM2NS} and Assumption \ref{nograb}, we have
			\begin{align}
				&\mathbb{E}_{0,s}[\mathcal{L}_\beta(x_1^{s+1},y_1^{s+1},\lambda_0^{s+1})-\mathcal{L}_\beta(x_0^{s+1},y_1^{s+1},\lambda_0^{s+1})]\nonumber\\
				\overset{\eqref{SM2Lxk1k}}{\leq}&\frac{1}{2}\mathbb{E}_{0,s}\|\nabla f(x_0^{s+1})-v_0^{s+1}\|^2-(\frac{\zeta_{\min}}{\eta}+\frac{\beta\varsigma_A}{2}-\frac{L+1}{2})\mathbb{E}_{0,s}\|x_1^{s+1}-x_0^{s+1}\|^2\nonumber\\
				\overset{\eqref{SM2vs10}}{\leq}&\frac{\sigma^2}{2N_{s+1}}-(\frac{\zeta_{\min}}{\eta}+\frac{\beta\varsigma_A}{2}-\frac{L+1}{2})\mathbb{E}_{0,s}\|x_1^{s+1}-x_0^{s+1}\|^2\nonumber\\
				\overset{\eqref{SM2NS}}{\leq}&\frac{1}{2}(\frac{\tau_s}{c_\tau}+\frac{\epsilon}{c_\epsilon})-(\frac{\zeta_{\min}}{\eta}+\frac{\beta\varsigma_A}{2}-\frac{L+1}{2})\mathbb{E}_{0,s}\|x_1^{s+1}-x_0^{s+1}\|^2.\label{SM2Lxk1ks}
			\end{align}
			By \eqref{SM2Llamk1k},\eqref{SM2lams110} and \eqref{SM2NS}, we have
			\begin{align}
				&\mathbb{E}_{0,s}[\mathcal{L}_\beta(x_1^{s+1},y_1^{s+1},\lambda_1^{s+1})-\mathcal{L}_\beta(x_1^{s+1},y_1^{s+1},\lambda_0^{s+1})]\overset{\eqref{SM2Llamk1k}}{=}\frac{1}{\beta}\mathbb{E}_{0,s}\|\lambda_1^{s+1}-\lambda_0^{s+1}\|^2\nonumber\\
				&\overset{\eqref{SM2lams110}}{\leq} \frac{5\zeta_{\max}^2}{\beta\varsigma_A\eta^2}\mathbb{E}_{0,s}\|x_1^{s+1}-x_0^{s+1}\|^2+\frac{5}{\varsigma_A}(\frac{\zeta_{\max}^2}{\eta^2}+L^2)\mathbb{E}_{0,s}\|x_T^s-x_{T-1}^s\|^2\nonumber\\
				&\quad+\frac{5L^2}{b\beta \varsigma_A}\mathbb{E}_{0,s}\|x_{T-1}^s-\tilde{x}^{s-1}\|^2+\frac{5\sigma^2}{ \beta\varsigma_A}(\frac{I_{(N_s<n)}}{N_s}+\frac{1}{N_{s+1}})\nonumber\\
				&\overset{\eqref{SM2NS}}{\leq} \frac{5\zeta_{\max}^2}{\beta\varsigma_A\eta^2}\mathbb{E}_{0,s}\|x_1^{s+1}-x_0^{s+1}\|^2+\frac{5}{\varsigma_A}(\frac{\zeta_{\max}^2}{\eta^2}+L^2)\mathbb{E}_{0,s}\|x_T^s-x_{T-1}^s\|^2\nonumber\\
				&\quad+\frac{5L^2}{b\beta \varsigma_A}\mathbb{E}_{0,s}\|x_{T-1}^s-\tilde{x}^{s-1}\|^2+\frac{5}{ \beta\varsigma_A}(\frac{\tau_s+\tau_{s+1}}{c_\tau}+\frac{2\epsilon}{c_\epsilon}).
				\label{SM2Llamk1ks}
			\end{align}
			Combining \eqref{SM2Lyk1ks}-\eqref{SM2Llamk1ks}, we have
			\[\begin{aligned}
				&\mathbb{E}_{0,s}[\mathcal{L}_\beta(w_1^{s+1})-\mathcal{L}_\beta(w_T^s)]\\
				\leq&\frac{5L^2}{b\beta \varsigma_A}\mathbb{E}_{0,s}\|x_{T-1}^s-\tilde{x}^{s-1}\|^2
				+\frac{5}{\beta\varsigma_A}(\frac{\zeta_{\max}^2}{\eta^2}+L^2)\mathbb{E}_{0,s}\|x_T^s-x_{T-1}^s\|^2+\frac{1}{2}(\frac{\tau_s}{c_\tau}+\frac{\epsilon}{c_\epsilon})\\
				&+\frac{5}{ \beta\varsigma_A}(\frac{\tau_s+\tau_{s+1}}{c_\tau}+\frac{2\epsilon}{c_\epsilon})-(\frac{\zeta_{\min}}{\eta}+\frac{\beta\varsigma_A}{2}-\frac{L+1}{2}-\frac{5\zeta_{\max}^2}{\beta\varsigma_A\eta^2})\mathbb{E}_{0,s}\|x_1^{s+1}-x_0^{s+1}\|^2.
			\end{aligned}\]
			Taking expectation of the above inequality over $x_0^1,\dots,x_0^s$, we obtain
			\begin{align}
				\phi_1^{s+1}\leq&\phi_T^s-\chi_T\mathbb{E}\|x_1^{s+1}-x_0^{s+1}\|^2-\delta_T\mathbb{E}\|x_T^s-\tilde{x}^{s-1}\|^2\nonumber\\
				&+\frac{5}{ \beta\varsigma_A}(\frac{\tau_s+\tau_{s+1}}{c_\tau}+\frac{2\epsilon}{c_\epsilon})+\frac{1}{2}(\frac{\tau_s}{c_\tau}+\frac{\epsilon}{c_\epsilon})\nonumber\\
				\leq&\phi_T^s-\chi_T\mathbb{E}\|x_1^{s+1}-x_0^{s+1}\|^2-\frac{L^2}{2b}\|x_T^s-\tilde{x}^{s-1}\|^2\nonumber\\
				&+\frac{5}{ \beta\varsigma_A}(\frac{\tau_s+\tau_{s+1}}{c_\tau}+\frac{2\epsilon}{c_\epsilon})+\frac{1}{2}(\frac{\tau_s}{c_\tau}+\frac{\epsilon}{c_\epsilon}).\label{SM2MF2}
			\end{align}
			where $\delta_T=\frac{L^2}{b}+\frac{10L^2}{b\beta\varsigma_A}\geq\frac{L^2}{2b}$, $\chi_T=\frac{\zeta_{\min}}{\eta}+\frac{\beta\varsigma_A}{2}-\frac{L+1}{2}-\frac{10\zeta_{\max}^2}{\beta\varsigma_A\eta^2}-\frac{5L^2}{\beta\varsigma_A}-\delta_1$.
			Telescoping \eqref{SM2MF1} and \eqref{SM2MF2} over $t$ from 0 to $T-1$ and over $s$ from 1 to $S$, we have
			\[\begin{aligned}
				&\sum_{s=1}^{S}\sum_{t=0}^{T-1}(\phi_{t+1}^s-\phi_t^s)\\
				\leq&-\sum_{s=1}^{S}\sum_{t=0}^{T-1}\chi_t\mathbb{E}\|x_{t+1}^s-x_t^s\|^2-\frac{L^2}{2b}\sum_{s=1}^{S}\sum_{t=0}^{T-1}\mathbb{E}\|x_t^s-\tilde{x}^{s-1}\|^2+(\frac{1}{2}+\frac{10}{\beta\varsigma_A})\sum_{s=1}^{S}\sum_{t=0}^{T-1}(\frac{\tau_s}{c_\tau}+\frac{\epsilon}{c_\epsilon})\\
			\end{aligned}\]
			\[\begin{aligned}
				\leq&-\sum_{s=1}^{S}\sum_{t=0}^{T-1}\chi_t\mathbb{E}\|x_{t+1}^s-x_t^s\|^2-\frac{L^2}{2b}\sum_{s=1}^{S}\sum_{t=0}^{T-1}\mathbb{E}\|x_t^s-\tilde{x}^{s-1}\|^2\\
				&+(\frac{1}{2c_\tau}+\frac{10}{c_\tau\beta\varsigma_A})\sum_{s=2}^{S}\sum_{t=0}^{T-1}\mathbb{E}\|x_{t+1}^{s-1}-x_t^{s-1}\|^2+(\frac{1}{2}+\frac{10}{\beta\varsigma_A})\frac{T\tau_1}{c_\tau}+(\frac{1}{2}+\frac{10}{\beta\varsigma_A})\frac{ST\epsilon}{c_\epsilon}\\
				\leq&-\sum_{s=1}^{S}\sum_{t=0}^{T-1}\chi_t\mathbb{E}\|x_{t+1}^s-x_t^s\|^2-\frac{L^2}{2b}\sum_{s=1}^{S}\sum_{t=0}^{T-1}\mathbb{E}\|x_t^s-\tilde{x}^{s-1}\|^2\\
				&+(\frac{1}{2c_\tau}+\frac{10}{c_\tau\beta\varsigma_A})\sum_{s=1}^{S}\sum_{t=0}^{T-1}\mathbb{E}\|x_{t+1}^s-x_t^s\|^2+(\frac{1}{2}+\frac{10}{\beta\varsigma_A})(\frac{1}{c_\tau}+\frac{1}{c_\epsilon})ST\epsilon,
			\end{aligned}\]
			and
			\[\begin{aligned}
				&\sum_{s=1}^{S}\sum_{t=0}^{T-1}(\phi_1^{s+1}-\phi_T^s)\\
				\leq&-\sum_{s=1}^{S}\sum_{t=0}^{T-1}\chi_T\mathbb{E}\|x_1^{s+1}-x_0^{s+1}\|^2-\frac{L^2}{2b}\sum_{s=1}^{S}\sum_{t=0}^{T-1}\mathbb{E}\|x_T^s-\tilde{x}^{s-1}\|^2\\
				&+\sum_{s=1}^{S}\sum_{t=0}^{T-1}\frac{5}{ \beta\varsigma_A}(\frac{\tau_s+\tau_{s+1}}{c_\tau}+\frac{2\epsilon}{c_\epsilon})+\sum_{s=1}^{S}\sum_{t=0}^{T-1}\frac{1}{2}(\frac{\tau_s}{c_\tau}+\frac{\epsilon}{c_\epsilon})\\
				\leq&-\sum_{s=1}^{S}\sum_{t=0}^{T-1}\chi_T\mathbb{E}\|x_1^{s+1}-x_0^{s+1}\|^2-\frac{L^2}{2b}\sum_{s=1}^{S}\sum_{t=0}^{T-1}\mathbb{E}\|x_T^s-\tilde{x}^{s-1}\|^2\\
				&+\frac{5}{c_\tau\beta\varsigma_A}\sum_{s=2}^{S}\sum_{t=0}^{T-1}\mathbb{E}\|x_{t+1}^{s-1}-x_t^{s-1}\|^2+(\frac{1}{2}+\frac{10}{\beta\varsigma_A})\frac{T\tau_1}{c_\tau}\\
				&+(\frac{1}{2c_\tau}+\frac{5}{c_\tau\beta\varsigma_A})\sum_{s=2}^{S}\sum_{t=0}^{T-1}\mathbb{E}\|x_{t+1}^s-x_t^s\|^2
				+(\frac{1}{2}+\frac{10}{\beta\varsigma_A})\frac{ST\epsilon}{c_\epsilon}\\
				\leq&-\sum_{s=1}^{S}\sum_{t=0}^{T-1}\chi_T\mathbb{E}\|x_1^{s+1}-x_0^{s+1}\|^2-\frac{L^2}{2b}\sum_{s=1}^{S}\sum_{t=0}^{T-1}\mathbb{E}\|x_T^s-\tilde{x}^{s-1}\|^2\\
				& +(\frac{1}{2c_\tau}+\frac{10}{c_\tau\beta\varsigma_A})\sum_{s=1}^{S}\sum_{t=0}^{T-1}\mathbb{E}\|x_{t+1}^s-x_t^s\|^2
				+(\frac{1}{2}+\frac{10}{\beta\varsigma_A})(\frac{1}{c_\tau}+\frac{1}{c_\epsilon})ST\epsilon.
			\end{aligned}\]
			where the second inequality follows from the definition $\tau_s=\frac{1}{T}\sum_{t=0}^{T-1}\|x_{t+1}^{s-1}-x_t^{s-1}\|^2$ for $s=2,\dots,S$, and the third inequality follows from $\tau_1\leq \epsilon S$. Thus, we obtain
			\begin{align}
				\phi_T^S-\phi_0^1\leq&-\sum_{s=1}^{S}\sum_{t=0}^{T-1}\Omega_t\mathbb{E}\|x_{t+1}^s-x_t^s\|^2-\frac{L^2}{2b}\sum_{s=1}^{S}\sum_{t=0}^{T-1}\mathbb{E}\|x_t^s-\tilde{x}^{s-1}\|^2\nonumber\\
				&+(\frac{1}{2}+\frac{10}{\beta\varsigma_A})(\frac{1}{c_\tau}+\frac{1}{c_\epsilon})ST\epsilon,\label{SM2MF3}
			\end{align}
			where $\Omega_t=\frac{\zeta_{\min}}{\eta}+\frac{\beta\varsigma_A}{2}-\frac{1}{2c_\tau}-\frac{L+1}{2}-\frac{5}{\beta \varsigma_A}(L^2+\frac{2}{c_\tau}+\frac{2\zeta_{\max}^2}{\eta^2})-(1+\frac{1}{\rho})\delta_{t+1}$.
			
			Choose the parameters $\eta,\beta,c_\tau,\rho>0$ used in Algorithm \ref{alg2} such that $\Omega_t>0$. By the definition of function $\phi_t^s$, we have
			\[\begin{aligned}
				\phi_t^s\geq\mathbb{E}[\mathcal{L}_\beta(w_t^s)]
				=&\mathbb{E}[f(x_t^s)+g(y_t^s)-\langle\lambda_t^s,Ax_t^s+By_t^s-c\rangle+\frac{\beta}{2}\|Ax_t^s+By_t^s-c\|^2]\\
				=&\mathbb{E}[f(x_t^s)+g(y_t^s)-\frac{1}{\beta}\langle\lambda_t^s,\lambda_{t-1}^s-\lambda_t^s\rangle+\frac{1}{2\beta}\|\lambda_{t-1}^s-\lambda_t^s\|^2]\\
				=&\mathbb{E}[f(x_t^s)+g(y_t^s)-\frac{1}{2\beta}\|\lambda_{t-1}^s\|^2+\frac{1}{2\beta}\|\lambda_t^s\|^2+\frac{1}{2\beta}\|\lambda_{t-1}^s-\lambda_t^s\|^2]\\
				\geq&\mathbb{E}[f(x_t^s)+g(y_t^s)-\frac{1}{2\beta}\|\lambda_{t-1}^s\|^2+\frac{1}{2\beta}\|\lambda_t^s\|^2].
			\end{aligned}\]
			Summing the above inequality over $t$ from 0 to $T-1$ and over $s$ from 1 to $S$, we have
			\[\sum_{s=1}^{S}\sum_{t=0}^{T-1}\phi_t^s\geq f^*+g^*+\frac{1}{2\beta}\|\lambda_0^1\|^2.\]
			Thus, the function $\phi_t^s$ is bounded from below. Let $\phi^*$ denotes a lower bound of $\phi_t^s$.
			It follows from \eqref{SM2MF3} that
			\[\frac{1}{ST}\sum_{s=1}^{S}\sum_{t=0}^{T-1}(\Omega_t\mathbb{E}\|x_{t+1}^s-x_t^s\|^2+\frac{L^2}{2b}\mathbb{E}\|x_t^s-\tilde{x}^{s-1}\|^2)\leq \frac{\phi_0^1-\phi^*}{ST}+(\frac{1}{2}+\frac{10}{\beta\varsigma_A})(\frac{1}{c_\tau}+\frac{1}{c_\epsilon})\epsilon.\]
			The proof is completed.
		\end{proof}
	
	Next, we prove the convergence of Algorithm \ref{alg2} in Theorem \ref{AdsCR2}.
		\begin{proof}[\textbf{Proof of Theorem \ref{AdsCR2}}]\label{THE2}
			First, we define an useful variable $\theta_t^s=\mathbb{E}\|x_{t+1}^s-x_t\|^2+\mathbb{E}\|x_t^s-x_{t-1}^s\|^2+\frac{1}{b}(\|x_t^s-\tilde{x}^s\|^2+\|x_{t-1}^s-\tilde{x}^s\|^2)$. By the step 9 in Algorithm \ref{alg2}, similar to \eqref{SM1disty}, we have
			\begin{equation}\label{SM2disty}
				\mathbb{E}[{\rm dist}(B^\top \lambda_{t+1}^s,\partial_g (y_{t+1}^s))^2]\leq \beta^2 \|B\|\|A\|\theta_t^s.
			\end{equation}
			By the step 10 of Algorithm \ref{alg2}, similar to \eqref{SM1distx},  we have
			\begin{align}
				&\mathbb{E}\|A^\top \lambda_{t+1}-\nabla f(x_{t+1})\|^2=\mathbb{E}\|v_t^s-\nabla f(x_t^s)+\frac{G}{\eta}(x_{t+1}^s-x_t^s)+\nabla f(x_t^s)-\nabla f(x_{t+1}^s)\|^2\nonumber\\
				&\overset{(i)}{\leq} \frac{3L^2}{b} \mathbb{E}\|x_t^s-\tilde{x}^{s-1}\|^2+\frac{3I_{(N_s<n)}}{N_s}\sigma^2+ 3(L^2+\frac{\zeta_{\max}^2}{\eta^2})\mathbb{E}\|x_{t+1}^s-x_t^s\|^2\nonumber\\
				&\overset{(ii)}{\leq} \frac{3L^2}{b} \mathbb{E}\|x_t^s-\tilde{x}^{s-1}\|^2+3(\frac{\tau_s}{c_\tau}+\frac{\epsilon}{c_\epsilon})+ 3(L^2+\frac{\zeta_{\max}^2}{\eta^2})\mathbb{E}\|x_{t+1}^s-x_t^s\|^2\nonumber\\
				&\leq 3(L^2+\frac{\zeta_{\max}^2}{\eta^2})\theta_t^s+3(\frac{\tau_s}{c_\tau}+\frac{\epsilon}{c_\epsilon}),\label{SM2distx}
			\end{align}
			where $(i)$ follows by Lemma \eqref{SM2SG} and $(ii)$ follows by \eqref{SM2NS}.
			By the step 11 of Algorithm \ref{alg2}, we have
			\begin{align}
				&\mathbb{E}\|Ax_{t+1}^s+By_{t+1}^s-c\|^2
				=\frac{1}{\beta^2}\mathbb{E}\|\lambda_{t+1}^s-\lambda_t^s\|^2\nonumber\\
				&\leq\frac{5L^2}{b \varsigma_A\beta^2}(\mathbb{E}\|x_t^s-\tilde{x}^s\|^2+\mathbb{E}\|x_{t-1}^s-\tilde{x}^s\|^2)+\frac{10\sigma^2I_{(N_s<n)}}{N_s \varsigma_A\beta^2}\nonumber\\
				&\quad+\frac{5\zeta_{\max}^2}{\varsigma_A\beta^2\eta^2}\mathbb{E}\|x_{t+1}^s-x_t^s\|^2+\frac{5}{\varsigma_A\beta^2}(\frac{\zeta_{\max}^2}{\eta^2}+L^2)\mathbb{E}\|x_t^s-x_{t-1}^s\|^2\nonumber\\
				&\leq  \frac{5}{ \varsigma_A\beta^2 }(\frac{\zeta_{\max}^2}{\eta^2}+L^2)\theta_t^k+\frac{10}{\varsigma_A\beta^2 }(\frac{\tau_s}{c_\tau}+\frac{\epsilon}{c_\epsilon}),\label{SM2distlam}
			\end{align}
			where the first inequality holds by Lemma \ref{SM2bdual}. Let
			\[\psi_1=\beta^2 \|B\|\|A\|,~\psi_2=3(L^2+\frac{\zeta_{\max}^2}{\eta^2}) ,~\psi_3=\frac{5}{\beta^2  \varsigma_A}(L^2+\frac{\zeta_{\max}^2}{\eta^2}).\]
			By \eqref{SM2disty}-\eqref{SM2distlam}, Definition \ref{stationary} and Lemma \ref{SM2MFD}, we have
			\[\begin{aligned}
				&\min_{1\leq k \leq K}\mathbb{E}[{\rm dist}(0,\partial \mathcal{L}(w_t^s))^2]
				\leq\frac{\psi}{K}\sum_{s=1}^{S}\sum_{t=0}^{T-1}\theta_t^s+\frac{\max\{3,\frac{10}{\varsigma_A\beta^2}\}}{c_\tau K}\sum_{s=1}^{S}T\tau_s+\frac{\epsilon}{c_\epsilon}\max\{3,\frac{10}{\varsigma_A\beta^2}\}\\
				&\leq\frac{\psi}{K}\sum_{s=1}^{S}\sum_{t=0}^{T-1}\theta_t^s+\frac{\max\{3,\frac{10}{\varsigma_A\beta^2}\}}{c_\tau K}\sum_{s=1}^{S}\sum_{t=0}^{T-1}\|x_{t+1}^s-x_t^s\|^2+\max\{3,\frac{10}{\varsigma_A\beta^2}\}(\frac{1}{c_\tau}+\frac{1}{c_\epsilon})\epsilon\\
				&\leq\frac{1}{K}(\psi+\frac{1}{c_\tau }\max\{3,\frac{10}{\varsigma_A\beta^2}\})\sum_{s=1}^{S}\sum_{t=0}^{T-1}\theta_t^s+\max\{3,\frac{10}{\varsigma_A\beta^2}\}(\frac{1}{c_\tau}+\frac{1}{c_\epsilon})\epsilon\\
				&\leq\frac{2\omega_1(\phi_0^1-\phi^*)}{\gamma K}+\omega_2(\frac{1}{c_\tau}+\frac{1}{c_\epsilon})\epsilon,
			\end{aligned}\]
			where $\omega_1=\psi+\frac{1}{c_\tau}\max\{3,\frac{10}{\varsigma_A\beta^2}\}$, $\omega_2=\max\{3,\frac{10}{\varsigma_A\beta^2}\}+\frac{\omega_1}{\gamma }(1+\frac{20}{\beta\varsigma_A})$, $\psi=\max\{\psi_1,\psi_2,\psi_3\}$, $\gamma=\min\{\Omega_t,\frac{L^2}{2}\}$, the second inequality holds by $\tau_1\leq \epsilon S$ and $\tau_s=\frac{1}{T}\sum_{t=0}^{T-1}\|x_{t+1}^{s-1}-x_t^{s-1}\|^2$ for $s=2,\dots,S$.
		\end{proof}
	
	We analyses the complexity of Algorithm \ref{alg2} in Corollary \ref{AbssvrgOC}.
		\begin{proof}[\textbf{Proof of Corollary \ref{AbssvrgOC}}]\label{coro1}
			Let $\rho=\frac{1}{T}$, recursing on $t$ for $0\leq t\leq T-1$, we have
			\[\begin{aligned}
				\delta_{t+1}&=(\frac{L^2}{2b}+\frac{10L^2}{b\beta\varsigma_A})\frac{(1+\rho)^{T-t}-1}{\rho}=\frac{T}{b}(\frac{L^2}{2}+\frac{10L^2}{\beta\varsigma_A})((1+\frac{1}{T})^{T-t}-1)\\
				&\leq\frac{T}{b}(\frac{L^2}{2}+\frac{10L^2}{\beta\varsigma_A})(e-1)\leq \frac{2T}{b}(\frac{L^2}{2}+\frac{10L^2}{\beta\varsigma_A}),
			\end{aligned}\]
			where the inequality holds by $(1+\frac{1}{T})^T$ is an increasing function and $\lim_{T\to\infty}(1+\frac{1}{T})^T=e$. It follows that,
			\[\begin{aligned}
				&\Omega_t-\frac{L^2}{2}=\frac{\zeta_{\min}}{\eta}+\frac{\beta\varsigma_A}{2}-\frac{1}{2c_\tau}-\frac{L^2+L+1}{2}-\frac{5}{\beta \varsigma_A}(L^2+\frac{2}{c_\tau}+\frac{2\zeta_{\max}^2}{\eta^2})-(1+\frac{1}{\rho})\delta_{t+1}\\
				\geq&\frac{\zeta_{\min}}{\eta}+\frac{\beta\varsigma_A}{2}-\frac{1}{2c_\tau}-\frac{L^2+L+1}{2}-\frac{5}{\beta \varsigma_A}(L^2+\frac{2}{c_\tau}+\frac{2\zeta_{\max}^2}{\eta^2})-(1+T)\frac{2T}{b}(\frac{L^2}{2}+\frac{10L^2}{\beta\varsigma_A})\\
				\geq&\frac{\zeta_{\min}}{\eta}+\frac{\beta\varsigma_A}{2}-\frac{1}{2c_\tau}-\frac{L^2+L+1}{2}-\frac{5}{\beta \varsigma_A}(L^2+\frac{2}{c_\tau}+\frac{2\zeta_{\max}^2}{\eta^2})-\frac{4T^2}{b}(\frac{L^2}{2}+\frac{10L^2}{\beta\varsigma_A})\\
				\overset{(i)}{=}&\frac{\zeta_{\min}}{\eta}-\frac{5L^2+L+1}{2}-\frac{1}{2}+\frac{\beta\varsigma_A}{2}-\frac{5}{\beta \varsigma_A}(9L^2+2+\frac{2\zeta_{\max}^2}{\eta^2}),
			\end{aligned}\]
			where $(i)$ holds by $T=[n^{\frac{1}{3}}]$, $b=[n^{\frac{2}{3}}]$, and $c_\tau=c_\epsilon=9+\frac{12}{L^2}(3+\beta^2 \|B\|\|A\|)>1$. 
			Let $\beta\geq\max\{\frac{20}{\varsigma_A},\sqrt{\frac{10}{3\varsigma_A}},\sqrt{\frac{3}{\|B\|\|A\|}(L^2+\frac{\zeta_{\max}^2}{\eta^2})},\frac{1}{\varsigma_A}\sqrt{10(2+9L^2+\frac{2\zeta_{\max}^2}{\eta^2})}\}$ and $\eta=\frac{2\zeta_{\min}}{5L^2+L+2}$. 
			Then, we have $\Omega_t>\frac{L^2}{2}$ and $\psi_3<\psi_2<\psi_1$.
			It easy verifies that $\psi=\psi_1=\beta^2 \|B\|\|A\|=\mathcal{O}(1)$ and $\gamma= \frac{L^2}{2}=\mathcal{O}(1)$, which are independent on $n$ and $K$.
			
			Thus, we have
			\[\begin{aligned}
				\min_{1\leq k \leq K}\mathbb{E}[{\rm dist}(0,\partial \mathcal{L}(w_k))^2]\leq&\frac{2\omega_1(\phi_0^1-\phi^*)}{\gamma K}+\omega_2(\frac{1}{c_\tau}+\frac{1}{c_\epsilon})\epsilon\\
				\leq&\frac{4\omega_1(\phi_0^1-\phi^*)}{L^2 K}+(3+\frac{4\omega_1}{L^2})\frac{2}{c_\epsilon}\epsilon\\
				\leq&\frac{4(3+\|B\|\|A\|)(\phi_0^1-\phi^*)}{L^2 K}+\frac{2}{3}\epsilon.
			\end{aligned}\]
			Then, to find an $\epsilon$-approximate solution, the total number of iterations required by Algorithm \ref{alg2} is given by \[K=\frac{12(3+\|B\|\|A\|)(\phi_0^1-\phi^*)}{L^2 \epsilon}=\mathcal{O}(\epsilon^{-1}),\]
			and the total number of complexity calls required by Algorithm \ref{alg2} is given by
			\[\begin{aligned}
				&\sum_{s=1}^{S}\min\{\frac{c_\tau\sigma^2}{\frac{1}{T}\sum_{t=0}^{T-1}\|x_{t+1}^{s-1}-x_t^{s-1}\|^2},c_\epsilon\sigma^2\epsilon^{-1},n\}+Kb\\
				\leq&
				S\min\{c_\epsilon\sigma^2\epsilon^{-1},n\}+Kb=\mathcal{O}(n+n^{\frac{2}{3}}\epsilon^{-1}).
			\end{aligned}
			\]
			The proof is completed.
		\end{proof}
	
	\section{Convergence Analysis for AbsSPIDER-SADMM}\label{app3}
	Throughout this section, let $n_k=\lceil  k/q \rceil$ such that $(n_k-1)q\leq k \leq n_k q-1$.
    \begin{lemma}\citep{JWWZZL20}\label{BEV3}
    Suppose that Assumptions \ref{nograb} and \ref{Lsm} hold. Algorithm \ref{alg3} generates the stochastic gradient $\{v_k\}$ satisfies 
		\[
			\mathbb{E}\|v_k-\nabla f(x_k)\|^2\leq \frac{L^2}{b}\sum_{i=(n_k-1)q}^{k-1} \mathbb{E}\|x_{i+1}-x_i\|^2+\frac{I_{(N_k<n)}}{N_k}\sigma^2.
		\]
    \end{lemma}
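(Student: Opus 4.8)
The plan is to establish the bound by induction within each inner cycle of length $q$, following the standard SPIDER variance argument and essentially reusing the structure of the proof of Lemma \ref{SM2SG}. Fix $k$ and let $k_0:=(n_k-1)q$ denote the most recent restart index, a multiple of $q$ with $k_0\le k\le n_kq-1$; correspondingly $N_k$ in the statement is to be read as $N_{k_0}$, the batch size drawn at that restart. First I would prove, by induction on $j$ for $k_0\le j\le k$, that
\[
\mathbb{E}\|v_j-\nabla f(x_j)\|^2\ \le\ \frac{L^2}{b}\sum_{i=k_0}^{j-1}\mathbb{E}\|x_{i+1}-x_i\|^2+\frac{I_{(N_{k_0}<n)}}{N_{k_0}}\sigma^2,
\]
which for $j=k$ is exactly the claimed inequality because $\sum_{i=k_0}^{k-1}=\sum_{i=(n_k-1)q}^{k-1}$.

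For the base case $j=k_0$ the algorithm is in the branch $\mathrm{mod}(k_0,q)=0$, so $v_{k_0}=\nabla f_{\mathcal N_{k_0}}(x_{k_0})$ and the error equals $\nabla f_{\mathcal N_{k_0}}(x_{k_0})-\nabla f(x_{k_0})$: it vanishes when $N_{k_0}=n$, and otherwise Assumption \ref{nograb} (together with the mini-batch variance bound stated immediately after it) gives $\mathbb{E}\|v_{k_0}-\nabla f(x_{k_0})\|^2\le\sigma^2/N_{k_0}$; in either case the bound holds with an empty sum. For the inductive step $k_0<j\le k$ the algorithm is in the else branch, so $v_j=\nabla f_{\mathcal I}(x_j)-\nabla f_{\mathcal I}(x_{j-1})+v_{j-1}$ with $\mathcal I$ a fresh batch of $b$ i.i.d.\ uniform draws, and I would decompose
\[
v_j-\nabla f(x_j)=D_j+\bigl(v_{j-1}-\nabla f(x_{j-1})\bigr),\qquad D_j:=\nabla f_{\mathcal I}(x_j)-\nabla f_{\mathcal I}(x_{j-1})-\nabla f(x_j)+\nabla f(x_{j-1}).
\]
Let $\mathcal G_j$ be the $\sigma$-field generated by all randomness prior to drawing $\mathcal I$ at step $j$; then $v_{j-1}-\nabla f(x_{j-1})$, $x_j$ and $x_{j-1}$ are $\mathcal G_j$-measurable while $\mathbb{E}[D_j\mid\mathcal G_j]=0$ since $\mathbb{E}_{\mathcal I}\nabla f_{\mathcal I}(x)=\nabla f(x)$, so the cross term drops and $\mathbb{E}\|v_j-\nabla f(x_j)\|^2=\mathbb{E}\|D_j\|^2+\mathbb{E}\|v_{j-1}-\nabla f(x_{j-1})\|^2$.

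It then remains to bound $\mathbb{E}\|D_j\|^2$. Since, conditionally on $\mathcal G_j$, $D_j$ is the average over the $b$ i.i.d.\ draws of the zero-mean terms $\nabla f_i(x_j)-\nabla f_i(x_{j-1})-\nabla f(x_j)+\nabla f(x_{j-1})$, the variance bound for sampling with replacement (Lemma B.2 in \cite{LJCJ17}) followed by $\mathbb{E}\|X-\mathbb{E}X\|^2\le\mathbb{E}\|X\|^2$ and Assumption \ref{Lsm} yields
\[
\mathbb{E}\|D_j\|^2\le\frac1b\,\mathbb{E}\|\nabla f_i(x_j)-\nabla f_i(x_{j-1})\|^2\le\frac{L^2}{b}\,\mathbb{E}\|x_j-x_{j-1}\|^2,
\]
and adding this to the induction hypothesis at $j-1$ closes the induction; these are precisely the estimates used in step (iv) of the proof of Lemma \ref{SM2SG}, so the computation is short. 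I expect the only delicate point to be the filtration bookkeeping across the restart: one must confirm that the stale error $v_{j-1}-\nabla f(x_{j-1})$ is independent of the fresh batch $\mathcal I$ (so that the cross term is genuinely zero) and that the variance term propagated from the base case remains tied to the restart index $k_0$ rather than drifting with $j$. Since the statement is quoted from \cite{JWWZZL20}, one may alternatively invoke it directly.
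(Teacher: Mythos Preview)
Your proposal is correct and is precisely the standard SPIDER variance-recursion argument. The paper does not supply its own proof of this lemma at all; it simply cites \cite{JWWZZL20}, so there is nothing to compare against beyond noting that your induction reproduces the cited result.
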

	\begin{lemma}\label{SM3bdual}
		Under Assumptions \ref{nograb}-\ref{fucor} and given the sequence $\{w_k\}$ from Algorithm \ref{alg3}, it holds that
		\[
        \begin{aligned}
			\|\lambda_{k+1}-\lambda_k\|^2&\leq \frac{5}{\varsigma_A}\|v_k-\nabla f(x_k)\|^2+\frac{5}{\varsigma_A}\|\nabla f(x_{k-1})-v_{k-1}\|^2\\
			&\quad+\frac{5\zeta_{\max}^2}{\varsigma_A\eta^2}\|x_{k+1}-x_k\|^2+\frac{5}{\varsigma_A}(\frac{\zeta_{\max}^2}{\eta^2}+L^2)\|x_k-x_{k-1}\|^2,
		\end{aligned}
		\]
		where $I_{(A)} = 1$ if the event $A$ occurs and 0 otherwise.
	\end{lemma}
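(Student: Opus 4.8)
The plan is to mirror the argument of Lemma~\ref{SM1bdual} almost verbatim, replacing the stochastic gradient $\nabla f_{\mathcal{I}_k}(x_k)$ by the SPIDER estimator $v_k$ and observing that the key optimality identity is insensitive to whether step $k$ is a ``restart'' iteration ($\bmod(k,q)=0$) or an ``inner'' iteration. \textbf{Step 1 (Optimality identity).} First I would write down the first-order optimality condition of the $x$-subproblem (step~11 of Algorithm~\ref{alg3}),
\[
v_k - A^\top\lambda_k + \beta A^\top(Ax_{k+1}+By_{k+1}-c) + \frac{G}{\eta}(x_{k+1}-x_k) = 0,
\]
and substitute the dual update $\lambda_{k+1} = \lambda_k - \beta(Ax_{k+1}+By_{k+1}-c)$ (step~12) to obtain
\[
A^\top\lambda_{k+1} = v_k + \frac{G}{\eta}(x_{k+1}-x_k).
\]
The point worth emphasizing is that this holds for \emph{every} $k$: both the $x$-subproblem and the dual update are expressed through the same $v_k$, whether $v_k$ is a fresh mini-batch gradient $\nabla f_{\mathcal{N}_k}(x_k)$ or a recursive SPIDER correction $\nabla f_{\mathcal{I}}(x_k)-\nabla f_{\mathcal{I}}(x_{k-1})+v_{k-1}$, so no case distinction is needed here. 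Shifting the index by one gives $A^\top\lambda_k = v_{k-1} + \frac{G}{\eta}(x_k-x_{k-1})$ as well.

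\textbf{Step 2 (Full-rank reduction).} Next, by Assumption~\ref{fucor} we have $(A^\top)^+ = A(A^\top A)^{-1}$ and $\zeta_{\max}\big(((A^\top)^+)^\top(A^\top)^+\big) = \zeta_{\max}((A^\top A)^{-1}) = 1/\varsigma_A$, hence from Step~1,
\[
\|\lambda_{k+1}-\lambda_k\|^2 \le \frac{1}{\varsigma_A}\Big\|v_k + \tfrac{G}{\eta}(x_{k+1}-x_k) - v_{k-1} - \tfrac{G}{\eta}(x_k-x_{k-1})\Big\|^2.
\]
\textbf{Step 3 (Splitting and Lipschitz bound).} Then I would insert $\pm\nabla f(x_k)$ and $\pm\nabla f(x_{k-1})$ inside the norm, split it into the five pieces $v_k-\nabla f(x_k)$, $\nabla f(x_k)-\nabla f(x_{k-1})$, $\nabla f(x_{k-1})-v_{k-1}$, $\frac{G}{\eta}(x_{k+1}-x_k)$ and $\frac{G}{\eta}(x_k-x_{k-1})$, apply the elementary inequality $\|\sum_{i=1}^5 a_i\|^2 \le 5\sum_{i=1}^5\|a_i\|^2$, and finally use Assumption~\ref{Lsm}, namely $\|\nabla f(x_k)-\nabla f(x_{k-1})\|^2 \le L^2\|x_k-x_{k-1}\|^2$, together with $\|\frac{G}{\eta}v\|^2 \le \frac{\zeta_{\max}^2}{\eta^2}\|v\|^2$. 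Collecting the coefficients of $\|x_{k+1}-x_k\|^2$ (only the $\frac{G}{\eta}(x_{k+1}-x_k)$ term contributes, giving $\frac{5\zeta_{\max}^2}{\varsigma_A\eta^2}$) and of $\|x_k-x_{k-1}\|^2$ (the Lipschitz term plus the $\frac{G}{\eta}(x_k-x_{k-1})$ term, giving $\frac{5}{\varsigma_A}(\frac{\zeta_{\max}^2}{\eta^2}+L^2)$) yields exactly the claimed inequality.

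I do not expect a genuine obstacle here; the only things to be careful about are the bookkeeping of which estimator $v$ attaches to which iterate (the shift $k\mapsto k-1$ in Step~1) and the fact that, unlike the SADMM and SVRG-ADMM counterparts, this statement is kept \emph{pathwise} — no expectation is taken and no variance bound is invoked. The expectation, together with the variance estimate of Lemma~\ref{BEV3}, is only brought in later when this lemma is fed into the descent estimate for $\phi_k$.
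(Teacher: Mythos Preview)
Your proposal is correct and follows essentially the same route as the paper's own proof: derive $A^\top\lambda_{k+1}=v_k+\frac{G}{\eta}(x_{k+1}-x_k)$ from the $x$-subproblem optimality condition and the dual update, pass to $\|\lambda_{k+1}-\lambda_k\|^2$ via the full-rank bound $\frac{1}{\varsigma_A}$, then split into five pieces and apply Assumption~\ref{Lsm}. The only cosmetic slip is the step numbering (the $x$-update is step~10 and the dual update is step~11 in Algorithm~\ref{alg3}, not 11 and 12), and your observation that the bound is kept pathwise here is exactly right.
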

	\begin{proof}
		Using the optimal condition of the step 10 in Algorithm \ref{alg3}, we have
		\[v_k-A^\top\lambda_k+\beta A^\top(Ax_{k+1}+By_{k+1}-c)+\frac{G}{\eta}(x_{k+1}-x_k)=0.\]
		Using the step 11 of Algorithm \ref{alg3}, we have
		\begin{equation}\label{SM3Alamk1}
			A^\top\lambda_{k+1}=v_k+\frac{G}{\eta}(x_{k+1}-x_k).
		\end{equation}
		By Assumption \ref{fucor}, we have
		\[\begin{aligned}
			\|\lambda_{k+1}-\lambda_k\|^2
			&\leq  \frac{1}{\varsigma_A}\|v_k+\frac{G}{\eta}(x_{k+1}-x_k)-v_{k-1}-\frac{G}{\eta}(x_k-x_{k-1})\|^2\\
			&\leq \frac{5}{\varsigma_A}(\|v_k-\nabla f(x_k)\|^2+\|\nabla f(x_k)-\nabla f(x_{k-1})\|^2\\
			&\quad+\|\nabla f(x_{k-1})-v_{k-1}\|^2+\|\frac{G}{\eta}(x_{k+1}-x_k)\|^2+\|\frac{G}{\eta}(x_k-x_{k-1})\|^2) \nonumber\\
			&\leq \frac{5}{\varsigma_A}\|v_k-\nabla f(x_k)\|^2+\frac{5}{\varsigma_A}\|\nabla f(x_{k-1})-v_{k-1}\|^2\\
			&\quad+\frac{5\zeta_{\max}^2}{\varsigma_A\eta^2}\|x_{k+1}-x_k\|^2+\frac{5}{\varsigma_A}(\frac{\zeta_{\max}^2}{\eta^2}+L^2)\|x_k-x_{k-1}\|^2,
		\end{aligned}\]
		where the last inequality holds by Assumption \ref{Lsm}. The proof is completed.
	\end{proof}
	
	\begin{lemma}\label{SM3MFD}
	Under Assumptions \ref{nograb}-\ref{fucor}, suppose the sequence $\{w_k\}$ is generated from Algorithm \ref{alg3}, and define a merit function $\phi_k$ as follows:
	\[\phi_k=\mathbb{E}\left[\mathcal{L}_\beta(w_k)+\frac{5}{\beta\varsigma_A}(\frac{\zeta_{\max}^2}{\eta^2}+L^2)\|x_k-x_{k-1}\|^2+\frac{5L^2}{b\beta\varsigma_A}\sum_{i=(n_k-1)q}^{k-1} \mathbb{E}\|x_{i+1}-x_i\|^2\right].\]
	Let $\epsilon >0$ and $c_\tau,c_\epsilon>0$. Choose the parameters $\tau_0,b,q,c_\tau,\eta,\beta>0$ such that $\chi>0$ and $\tau_0=c_d\epsilon$ with $1\leq c_d\leq q$. Then we have
	\[\frac{\chi}{K}\sum_{i=(n_k-1)q}^{k}\|x_{i+1}-x_i\|^2\leq\frac{\phi_0-\phi^*}{ K}+(\frac{1}{2}+\frac{10}{\beta\varsigma_A})(\frac{c_d}{c_\tau}+\frac{1}{c_\epsilon})\epsilon,\]
	where $\chi=\frac{\zeta_{\min}}{\eta}+\frac{\beta\varsigma_A}{2}-\frac{L+1}{2}-\frac{10\zeta_{\max}^2}{\beta \varsigma_A\eta^2}-\frac{5L^2}{\beta \varsigma_A}-\frac{5L^2}{b\beta\varsigma_A}-\frac{qL^2}{2b}-\frac{10qL^2}{b\beta\varsigma_A}-\frac{1}{2c_\tau}-\frac{10}{c_\tau\beta\varsigma_A}$ and $\phi^*$ denotes a lower bound of $\phi_k$.
	\end{lemma}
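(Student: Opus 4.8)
The plan is to adapt the three-term descent argument used for Lemma~\ref{SM1MFD} and Lemma~\ref{SM2MFD}: bound the one-step change of the augmented Lagrangian along the $y$-, $x$- and $\lambda$-updates of Algorithm~\ref{alg3}, control the dual increment via Lemma~\ref{SM3bdual} and the gradient-estimation error via Lemma~\ref{BEV3}, and then choose the auxiliary quadratic terms in $\phi_k$ so that everything telescopes. The genuinely new ingredient is that the SPIDER variance bound carries the accumulated sum $S_k:=\sum_{i=(n_k-1)q}^{k-1}\mathbb{E}\|x_{i+1}-x_i\|^2$ of squared increments since the last checkpoint, which is exactly why $\phi_k$ contains the term $\frac{5L^2}{b\beta\varsigma_A}\sum_{i=(n_k-1)q}^{k-1}\mathbb{E}\|x_{i+1}-x_i\|^2$; keeping the bookkeeping consistent across the length-$q$ blocks is where the care lies.

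First I would derive the one-step descent. From the optimality condition of step~10 of Algorithm~\ref{alg3}, i.e., identity \eqref{SM3Alamk1}, $A^\top\lambda_{k+1}=v_k+\frac{G}{\eta}(x_{k+1}-x_k)$, the expansion of $\mathcal{L}_\beta(x_{k+1},y_{k+1},\lambda_k)-\mathcal{L}_\beta(x_k,y_{k+1},\lambda_k)$ and $L$-smoothness (Assumption~\ref{Lsm}), one gets, just as in \eqref{SM1Lxk1k},
\[
\mathcal{L}_\beta(x_{k+1},y_{k+1},\lambda_k)-\mathcal{L}_\beta(x_k,y_{k+1},\lambda_k)\le \tfrac12\|\nabla f(x_k)-v_k\|^2-\Big(\tfrac{\zeta_{\min}}{\eta}+\tfrac{\beta\varsigma_A}{2}-\tfrac{L+1}{2}\Big)\|x_{k+1}-x_k\|^2 .
\]
Minimality of $y_{k+1}$ in step~9 gives $\mathcal{L}_\beta(x_k,y_{k+1},\lambda_k)\le\mathcal{L}_\beta(x_k,y_k,\lambda_k)$, and step~11 gives $\mathcal{L}_\beta(w_{k+1})-\mathcal{L}_\beta(x_{k+1},y_{k+1},\lambda_k)=\frac1\beta\|\lambda_{k+1}-\lambda_k\|^2$. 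Summing these three, taking expectations, and inserting Lemma~\ref{SM3bdual} for $\mathbb{E}\|\lambda_{k+1}-\lambda_k\|^2$ and then Lemma~\ref{BEV3} for $\mathbb{E}\|v_k-\nabla f(x_k)\|^2$ and $\mathbb{E}\|v_{k-1}-\nabla f(x_{k-1})\|^2$, yields a bound on $\mathbb{E}\mathcal{L}_\beta(w_{k+1})-\mathbb{E}\mathcal{L}_\beta(w_k)$ whose right-hand side consists of a negative multiple of $\mathbb{E}\|x_{k+1}-x_k\|^2$, the positive multiple $\frac{5}{\beta\varsigma_A}(\frac{\zeta_{\max}^2}{\eta^2}+L^2)$ of $\mathbb{E}\|x_k-x_{k-1}\|^2$, the combination $(\frac12+\frac{5}{\beta\varsigma_A})\frac{L^2}{b}S_k+\frac{5}{\beta\varsigma_A}\frac{L^2}{b}S_{k-1}$, and the checkpoint-noise terms $(\frac12+\frac{5}{\beta\varsigma_A})\frac{\sigma^2 I_{(N_k<n)}}{N_k}+\frac{5}{\beta\varsigma_A}\frac{\sigma^2 I_{(N_{k-1}<n)}}{N_{k-1}}$.

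Then I would pass to $\phi_k$ and telescope. Forming $\phi_{k+1}-\phi_k$, the first quadratic of $\phi_k$ cancels the $\mathbb{E}\|x_k-x_{k-1}\|^2$ term at the cost of moving $\frac{5}{\beta\varsigma_A}(\frac{\zeta_{\max}^2}{\eta^2}+L^2)$ onto the increment coefficient, and the sum term contributes $\frac{5L^2}{b\beta\varsigma_A}(S_{k+1}-S_k)$, which equals $\frac{5L^2}{b\beta\varsigma_A}\mathbb{E}\|x_{k+1}-x_k\|^2$ inside a block and is $\le 0$ at a checkpoint $k+1$ (where $S_{k+1}=0$), so over a run it costs at most another $\frac{5L^2}{b\beta\varsigma_A}$ on the increment coefficient. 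The leftover $S_k,S_{k-1}$ terms are not telescoped; reindexing and using that each $\mathbb{E}\|x_{i+1}-x_i\|^2$ enters at most $q$ of the $S_k$'s gives $\sum_k S_k\le q\sum_i\mathbb{E}\|x_{i+1}-x_i\|^2$, a further loss of $\frac{qL^2}{2b}+\frac{10qL^2}{b\beta\varsigma_A}$ on the increment coefficient. For the noise, $N_{(n_k-1)q}=\min\{c_\tau\sigma^2\tau_{(n_k-1)q}^{-1},c_\epsilon\sigma^2\epsilon^{-1},n\}$ gives $\frac{\sigma^2 I_{(N_{(n_k-1)q}<n)}}{N_{(n_k-1)q}}\le\frac{\tau_{(n_k-1)q}}{c_\tau}+\frac{\epsilon}{c_\epsilon}$, where $\tau_{(n_k-1)q}$ equals the input $\tau_0=c_d\epsilon$ on the first block and $\frac1q\sum_i\|x_{i+1}-x_i\|^2$ over the previous block otherwise; summing over a block (whose checkpoint value recurs $q$ times) recycles $\frac{1}{2c_\tau}+\frac{10}{c_\tau\beta\varsigma_A}$ into the increment coefficient, while the residual collapses to $(\frac12+\frac{10}{\beta\varsigma_A})(\frac{c_d}{c_\tau}+\frac1{c_\epsilon})\epsilon$ after dividing by $K$ and using $K\ge q$, which is where $1\le c_d\le q$ enters (it also keeps $\tau_0>0$ so $N_0$ is well defined). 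Collecting all losses one obtains $\phi_{k+1}\le\phi_k-\chi\mathbb{E}\|x_{k+1}-x_k\|^2+(\frac12+\frac{10}{\beta\varsigma_A})(\frac{c_d}{c_\tau}+\frac1{c_\epsilon})\epsilon$ with $\chi$ exactly as stated; telescoping over $k=0,\dots,K-1$ together with the lower-boundedness of $\{\phi_k\}$ — established as in the proof of Theorem~\ref{AdsCR1}/Lemma~\ref{SM2MFD} from \eqref{SM3Alamk1}, Lemma~\ref{BEV3} and Assumptions~\ref{grab} and~\ref{lb} — gives the claimed inequality.

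The main obstacle is the block-wise accounting in the last step, not the descent itself, which is essentially a line-by-line copy of \eqref{SM1Lxk1k}--\eqref{SM1mfd} with $v_k$ in place of $\nabla f_{\mathcal I_k}(x_k)$. One must track precisely how many times each increment $\|x_{i+1}-x_i\|^2$ re-enters the variance bound within its length-$q$ block (this produces the factors of $q$ in the $\frac{qL^2}{2b}$ and $\frac{10qL^2}{b\beta\varsigma_A}$ terms of $\chi$), handle the discontinuity of the accumulated-sum term of $\phi_k$ at checkpoints in the favorable direction, and exploit that the checkpoint accumulator $\tau_{(n_k-1)q}$ summarizes the \emph{previous} block — with the input value $\tau_0=c_d\epsilon$ and the restriction $1\le c_d\le q$ being exactly what keeps the first block's noise within the stated $\epsilon$-level residual. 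Matching all the constants so that the nine negative terms in $\chi$ and the $(\frac12+\frac{10}{\beta\varsigma_A})(\frac{c_d}{c_\tau}+\frac1{c_\epsilon})\epsilon$ residual line up is the only delicate bookkeeping.
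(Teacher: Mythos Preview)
Your proposal is correct and follows the same route as the paper: the three-term descent \eqref{SM3Lxk1k}--\eqref{SM3Llamk1k}, insertion of Lemmas~\ref{SM3bdual} and~\ref{BEV3}, passage to $\phi_{k+1}-\phi_k$, block-wise control of the accumulated sums $S_k$, and lower-boundedness from \eqref{SM3Alamk1} together with Assumptions~\ref{lb} and~\ref{grab}. The only slip is organizational: you cannot literally obtain the per-step inequality $\phi_{k+1}\le\phi_k-\chi\,\mathbb{E}\|x_{k+1}-x_k\|^2+\cdots$ with the stated $\chi$, because the $\tfrac{qL^2}{2b}$ and $\tfrac{10qL^2}{b\beta\varsigma_A}$ terms in $\chi$ arise only \emph{after} summing over $k$ and invoking $\sum_k S_k\le q\sum_i\mathbb{E}\|x_{i+1}-x_i\|^2$ (as you correctly note two sentences earlier); the paper handles this by telescoping \eqref{SM3MF1} within each length-$q$ block first (producing \eqref{SM3MF}) and then across blocks, rather than claiming a uniform one-step bound.
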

	\begin{proof}
		From the definition of the augmented Lagrangian function $\mathcal{L}_\beta$, it follows that
		\begin{align}
			&\mathcal{L}_\beta(x_{k+1},y_{k+1},\lambda_k)-\mathcal{L}_\beta(x_k,y_{k+1},\lambda_k)\nonumber\\
			= & f(x_{k+1})-f(x_k)-\langle\lambda_k, A(x_{k+1}-x_k)\rangle+\frac{\beta}{2}\|Ax_{k+1}+By_{k+1}-c\|^2\nonumber\\
			&-\frac{\beta}{2}\|Ax_k+By_{k+1}-c\|^2 \nonumber\\
			=& f(x_{k+1})-f(x_k)-\langle\lambda_k-\beta(Ax_{k+1}+By_{k+1}-c), A(x_{k+1}-x_k)\rangle-\frac{\beta}{2}\|A(x_{k+1}-x_k)\|^2\nonumber\\
            =& f(x_{k+1})-f(x_k)-\langle A^\top\lambda_{k+1}, x_{k+1}-x_k\rangle-\frac{\beta}{2}\|A(x_{k+1}-x_k)\|^2\nonumber
            \end{align}
            \begin{align}
			\overset{(i)}{=}&f(x_{k+1})-f(x_k)-\langle v_k+\frac{G}{\eta}(x_{k+1}-x_k), x_{k+1}-x_k\rangle-\frac{\beta}{2}\|A(x_{k+1}-x_k)\|^2\nonumber\\
			\overset{(ii)}{\leq} &\langle \nabla f(x_k)-v_k, x_{k+1}-x_k\rangle+\frac{L}{2}\|x_{k+1}-x_k\|^2-\frac{1}{\eta}\|x_{k+1}-x_k\|_G^2-\frac{\beta}{2}\|A(x_{k+1}-x_k)\|^2\nonumber\\
			\leq &\frac{1}{2}\|\nabla f(x_k)-v_k\|^2-(\frac{\zeta_{\min}}{\eta}+\frac{\beta\varsigma_A}{2}-\frac{L+1}{2})\|x_{k+1}-x_k\|^2,\label{SM3Lxk1k}
		\end{align}
		where $(i)$ holds by \eqref{SM3Alamk1} and $(ii)$ holds by Assumption \ref{Lsm}.
		Since $y_{k+1}$ is a minimizer of step 9 in Algorithm \ref{alg3}, we have
		\begin{equation}\label{SM3Lyk1k}
			\mathcal{L}_\beta(x_k,y_{k+1},\lambda_k)
			\leq\mathcal{L}_\beta(x_k,y_k,\lambda_k).
		\end{equation}
		Using the step 11 in Algorithm \ref{alg3}, we have
		\begin{equation}\label{SM3Llamk1k}
			\mathcal{L}_\beta(x_{k+1},y_{k+1},\lambda_{k+1})-\mathcal{L}_\beta(x_{k+1},y_{k+1},\lambda_k)=\frac{1}{\beta}\|\lambda_{k+1}-\lambda_k\|^2.
		\end{equation}
		Combining \eqref{SM3Lxk1k}-\eqref{SM3Llamk1k} and Lemma \ref{SM3bdual}, we have
		\[\begin{aligned}
			&\mathcal{L}_\beta(w_{k+1})-\mathcal{L}_\beta(w_k)\\
			\leq &\frac{1}{2}\|\nabla f(x_k)-v_k\|^2-(\frac{\zeta_{\min}}{\eta}
			+\frac{\beta\varsigma_A}{2}-\frac{L+1}{2})\|x_{k+1}-x_k\|^2+\frac{1}{\beta}\|\lambda_{k+1}-\lambda_k\|^2.
		\end{aligned}\]
		Taking expectation $\mathbb{E}(\cdot)$ over the above inequality and following from Lemmas \ref{BEV3} and \ref{SM3bdual}, we have
		\[\begin{aligned}
			&\mathbb{E}(\mathcal{L}_\beta(w_{k+1})-\mathcal{L}_\beta(w_k))\\
			\leq & (\frac{L^2}{2b}+\frac{10L^2}{b\beta\varsigma_A})\sum_{i=(n_k-1)q}^{k-1} \mathbb{E}\|x_{i+1}-x_i\|^2+\frac{5}{\beta  \varsigma_A}(\frac{\zeta_{\max}^2}{\eta^2}+L^2)\mathbb{E}\|x_k-x_{k-1}\|^2\\
			&+\frac{\sigma^2 I_{(N_k<n)}}{N_k}(\frac{1}{2}+\frac{10}{\beta\varsigma_A})-(\frac{\zeta_{\min}}{\eta}+\frac{\beta\varsigma_A}{2}-\frac{L+1}{2}-\frac{5\zeta_{\max}^2}{\beta \varsigma_A\eta^2})\mathbb{E}\|x_{k+1}-x_k\|^2.
		\end{aligned}\]
		Define the following merit function,
		\[\phi_k=\mathbb{E}\left[\mathcal{L}_\beta(w_k)+\frac{5}{\beta\varsigma_A}(\frac{\zeta_{\max}^2}{\eta^2}+L^2)\|x_k-x_{k-1}\|^2+\frac{5L^2}{b\beta\varsigma_A}\sum_{i=(n_k-1)q}^{k-1} \mathbb{E}\|x_{i+1}-x_i\|^2\right].\]
		Then we have
		\begin{align}
			\phi_{k+1}-\phi_k
			\leq&-(\frac{\zeta_{\min}}{\eta}+\frac{\beta\varsigma_A}{2}-\frac{L+1}{2}-\frac{10\zeta_{\max}^2}{\beta \varsigma_A\eta^2}-\frac{5L^2}{\beta \varsigma_A}-\frac{5L^2}{b\beta\varsigma_A})\mathbb{E}\|x_{k+1}-x_k\|^2\nonumber\\
			&+(\frac{L^2}{2b}+\frac{10L^2}{b\beta\varsigma_A})\sum_{i=(n_k-1)q}^{k-1} \mathbb{E}\|x_{i+1}-x_i\|^2+\frac{\sigma^2 I_{(N_k<n)}}{N_k}(\frac{1}{2}+\frac{10}{\beta\varsigma_A}),\label{SM3MF1}
		\end{align}
		where the inequality holds by
		\begin{equation}\label{xi1ikk1}
			\sum_{i=(n_k-1)q}^{k} \mathbb{E}\|x_{i+1}-x_i\|^2=\sum_{i=(n_k-1)q}^{k-1} \mathbb{E}\|x_{i+1}-x_i\|^2+\mathbb{E}\|x_{k+1}-x_k\|^2.
		\end{equation}
		Since $(n_k-1)q\leq k\leq n_kq-1$, and let $(n_k-1)q\leq l\leq n_kq-1$, telescoping \eqref{SM3MF1} over $k$ from $(n_k-1)q$ to $k$, we have
		\begin{align}
			&\quad\phi_{k+1}-\phi_{(n_k-1)q}\nonumber\\
			&\overset{(i)}{\leq}-(\frac{\zeta_{\min}}{\eta}+\frac{\beta\varsigma_A}{2}-\frac{L+1}{2}-\frac{10\zeta_{\max}^2}{\beta \varsigma_A\eta^2}-\frac{5L^2}{\beta \varsigma_A}-\frac{5L^2}{b\beta\varsigma_A}) \sum_{l=(n_k-1)q}^{k}\mathbb{E}\|x_{l+1}-l_k\|^2\nonumber\\
			&\quad+(\frac{L^2}{2b}+\frac{10L^2}{b\beta\varsigma_A})\sum_{l=(n_k-1)q}^{k}\sum_{i=(n_k-1)q}^{k-1} \mathbb{E}\|x_{i+1}-x_i\|^2\nonumber\\
			&\quad+\sum_{l=(n_k-1)q}^{k}\frac{\tau_l}{c_\tau}(\frac{1}{2}+\frac{10}{\beta\varsigma_A})+\frac{\epsilon}{c_\epsilon}(\frac{1}{2}+\frac{10}{\beta\varsigma_A})\nonumber\\
			&\overset{(ii)}{\leq}-(\frac{\zeta_{\min}}{\eta}+\frac{\beta\varsigma_A}{2}-\frac{L+1}{2}-\frac{10\zeta_{\max}^2}{\beta \varsigma_A\eta^2}-\frac{5L^2}{\beta \varsigma_A}-\frac{5L^2}{b\beta\varsigma_A}) \sum_{l=(n_k-1)q}^{k}\mathbb{E}\|x_{l+1}-l_k\|^2\nonumber\\
            &\quad+(\frac{qL^2}{2b}+\frac{10qL^2}{b\beta\varsigma_A})\sum_{i=(n_k-1)q}^{k-1} \mathbb{E}\|x_{i+1}-x_i\|^2\nonumber\\
			&\quad+(\frac{1}{2c_\tau}+\frac{10}{c_\tau\beta\varsigma_A})\sum_{i=(n_k-1)q}^{k}\|x_{i+1}-x_i\|^2+(\frac{1}{2}+\frac{10}{\beta\varsigma_A})(\frac{c_d}{c_\tau}+\frac{1}{c_\epsilon})\epsilon\nonumber\\
			&\leq-\chi \sum_{i=(n_k-1)q}^{k}\|x_{i+1}-x_i\|^2+(\frac{1}{2}+\frac{10}{\beta\varsigma_A})(\frac{c_d}{c_\tau}+\frac{1}{c_\epsilon})\epsilon,\label{SM3MF}
		\end{align}
		where $\chi=\frac{\zeta_{\min}}{\eta}+\frac{\beta\varsigma_A}{2}-\frac{L+1}{2}-\frac{10\zeta_{\max}^2}{\beta \varsigma_A\eta^2}-\frac{5L^2}{\beta \varsigma_A}-\frac{5L^2}{b\beta\varsigma_A}-\frac{qL^2}{2b}-\frac{10qL^2}{b\beta\varsigma_A}-\frac{1}{2c_\tau}-\frac{10}{c_\tau\beta\varsigma_A}$, $(i)$ holds by \eqref{SM2NS} and $(ii)$ holds by $\tau_0=c_d\epsilon$ with the constant $1\leq c_d \leq q$ and $\tau_k=\frac{1}{q}\sum_{i=(n_k-1)q}^{k}\|x_{i+1}-x_i\|^2$ for $k=1,\dots,K-1$.
		
		Since $(A^\top)^+=A(A^\top A)^{-1}$ by Assumption \ref{fucor} and
		\[\begin{aligned}
			&\langle (A^\top)^+ (v_k+\frac{G}{\eta}(x_{k+1}-x_k)), Ax_{k+1}+By_{k+1}-c\rangle\\
			\leq& \frac{1}{2\beta}\|(A^\top)^+ (v_k+\frac{G}{\eta}(x_{k+1}-x_k))\|^2+\frac{\beta}{2}\|Ax_{k+1}+By_{k+1}-c\|^2,
		\end{aligned}
		\]
		we have
		\[\begin{aligned}
			&\mathbb{E}[f(x_{k+1})+g(y_{k+1})-\lambda_{k+1}^\top (Ax_{k+1}+By_{k+1}-c)+\frac{\beta}{2}\|Ax_{k+1}+By_{k+1}-c\|^2]\\
			\overset{(i)}{=}&\mathbb{E}[f(x_{k+1})+g(y_{k+1})-\langle (A^\top)^+(v_k+\frac{G}{\eta}(x_{k+1}-x_k)), Ax_{k+1}+By_{k+1}-c\rangle \\
			&+\frac{\beta}{2}\|Ax_{k+1}+By_{k+1}-c\|^2]\\
			\geq&\mathbb{E}[f(x_{k+1})+g(y_{k+1})-\frac{1}{2\beta\varsigma_A}\|v_k+\frac{G}{\eta}(x_{k+1}-x_k)-\nabla f(x_k)+\nabla f(x_k)\|^2]\\
			\geq&\mathbb{E}[f(x_{k+1})+g(y_{k+1})-\frac{3}{2\beta\varsigma_A}\|v_k-\nabla f(x_k)\|^2-\frac{3}{2\beta\varsigma_A}\|\nabla f(x_k)\|^2-\frac{3\zeta_{\max}^2}{2\beta\eta^2\varsigma_A}\|x_{k+1}-x_k\|^2]\\
			\overset{(ii)}{\geq}&\mathbb{E}[f(x_{k+1})+g(y_{k+1})-\frac{3}{2\beta\varsigma_A}(\frac{L^2}{b}\sum_{i=(n_k-1)q}^{k-1} \mathbb{E}\|x_{i+1}-x_i\|^2+\frac{I_{(N_k<n)}}{N_k}\sigma^2)\\
			&-\frac{3\mu^2}{2\beta\varsigma_A}-\frac{3\zeta_{\max}^2}{2\beta\eta^2\varsigma_A}\|x_{k+1}-x_k\|^2]\\
			\overset{(iii)}{\geq}&\mathbb{E}[f(x_{k+1})+g(y_{k+1})-\frac{3}{2\beta\varsigma_A}(\frac{L^2}{b}+\frac{1}{q c_\tau})\sum_{i=(n_k-1)q}^{k-1} \mathbb{E}\|x_{i+1}-x_i\|^2\\
			&-\frac{3}{2\beta\varsigma_A}(\frac{\zeta_{\max}^2}{\eta^2}+\frac{1}{qc_\tau})\|x_{k+1}-x_k\|^2-\frac{3}{2\beta\varsigma_A}\left(\mu^2+(\frac{c_d}{c_\tau}+\frac{1}{c_\epsilon})\epsilon\right)],
		\end{aligned}\]
		where $(i)$ holds by \eqref{SM3Alamk1}, $(ii)$ holds by \eqref{BEV3} and Assumption \ref{grab}, $(iii)$  holds by the definition of $\tau_k$ and \eqref{xi1ikk1}. Using the definition of $\phi_k$ and Assumption \ref{lb}, we have
		\[\phi_{k+1}\geq f^*+g^*-\frac{3}{2\beta\varsigma_A}(\mu^2+(\frac{c_d}{c_\tau}+\frac{1}{c_\epsilon})\epsilon).\]
		It follows that the function $\phi_k$ is bounded from below. Let $\phi^*$ denotes a lower bound of $\phi_k$.
		
		Given $b,q,c_\tau,\eta,\beta>0$ such that $\chi>0$. Telescoping \eqref{SM3MF} over $k$ from 0 to $K-1$, we have
		\[
		\frac{\chi}{K}\sum_{i=(n_k-1)q}^{k}\|x_{i+1}-x_i\|^2\leq\frac{\phi_0-\phi^*}{ K}+(\frac{1}{2}+\frac{10}{\beta\varsigma_A})(\frac{c_d}{c_\tau}+\frac{1}{c_\epsilon})\epsilon.
		\]
		The proof is completed.
	\end{proof}
	
	Next, we prove the convergence of Algorithm \ref{alg3} in Theorem \ref{AdsCR3}.
	\begin{proof}[\textbf{Proof of Theorem \ref{AdsCR3}}]\label{THE3}
		First, we define a useful variable $\theta_k=\mathbb{E}[\|x_{k+1}-x_k\|^2+\|x_k-x_{k-1}\|^2+\frac{1}{q}\sum_{i=(n_k-1)q}^{k}\|x_{i+1}-x_i\|^2]$. By the optimal condition of the step 9 in Algorithm \ref{alg3}, similar to \eqref{SM2disty}, we have,
		\begin{equation}\label{SM3disty}
			\mathbb{E}[{\rm dist}(B^\top \lambda_{k+1},\partial_g (y_{k+1}))^2]\leq  \beta^2 \|B\|\|A\|\theta_k.
		\end{equation}
		By the step 10 of Algorithm \ref{alg3}, we have
		\begin{align}
			&\mathbb{E}\|A^\top \lambda_{k+1}-\nabla f(x_{k+1})\|^2\nonumber\\
			=&\mathbb{E}\|v_k+\frac{G}{\eta}(x_{k+1}-x_k)-\nabla f(x_k)+\nabla f(x_k)-\nabla f(x_{k+1})\|^2\nonumber\\
			\overset{\eqref{BEV3}}{\leq}& \frac{3L^2}{b}\sum_{i=(n_k-1)q}^{k-1} \mathbb{E}\|x_{i+1}-x_i\|^2+\frac{3I_{(N_k<n)}}{N_k}\sigma^2+ 3(L^2+\frac{\zeta_{\max}^2}{\eta^2})\|x_{k+1}-x_k\|^2\nonumber\\
			\overset{\eqref{SM2NS}}{\leq}&\frac{3L^2}{b}\sum_{i=(n_k-1)q}^{k-1} \mathbb{E}\|x_{i+1}-x_i\|^2+3(L^2+\frac{\zeta_{\max}^2}{\eta^2})\|x_{k+1}-x_k\|^2+3(\frac{\tau_s}{c_\tau}+\frac{\epsilon}{c_\epsilon})\nonumber\\
			\leq&3(L^2+\frac{\zeta_{\max}^2}{\eta^2})\theta_k+3(\frac{\tau_k}{c_\tau}+\frac{\epsilon}{c_\epsilon}).\label{SM3distx}
		\end{align}
		By the step 11 of Algorithm \ref{alg3}, we have
		\begin{align}
			&\mathbb{E}\|Ax_{k+1}+By_{k+1}-c\|^2
			=\frac{1}{\beta^2}\mathbb{E}\|\lambda_{k+1}-\lambda_k\|^2\nonumber\\
			\leq& \frac{10L^2}{b \beta^2\varsigma_A}\sum_{i=(n_k-1)q}^{k-1} \mathbb{E}\|x_{i+1}-x_i\|^2+\frac{10\sigma^2I_{(N_k<n)}}{N_k \beta^2\varsigma_A}+\frac{5\zeta_{\max}^2}{\beta^2\varsigma_A\eta^2}\mathbb{E}\|x_{k+1}-x_k\|^2\nonumber\\
			&+\frac{5}{\beta^2\varsigma_A}(\frac{\zeta_{\max}^2}{\eta^2}+L^2)\mathbb{E}\|x_k-x_{k-1}\|^2\nonumber
            \end{align}
            \begin{align}
			\leq& \frac{10L^2}{b \beta^2\varsigma_A}\sum_{i=(n_k-1)q}^{k-1} \mathbb{E}\|x_{i+1}-x_i\|^2+\frac{10}{ \beta^2\varsigma_A}(\frac{\tau_s}{c_\tau}+\frac{\epsilon}{c_\epsilon})+\frac{5\zeta_{\max}^2}{\beta^2\varsigma_A\eta^2}\mathbb{E}\|x_{k+1}-x_k\|^2\nonumber\\
			&+\frac{5}{\beta^2\varsigma_A}(\frac{\zeta_{\max}^2}{\eta^2}+L^2)\mathbb{E}\|x_k-x_{k-1}\|^2\nonumber\\
			\leq&\frac{5}{\beta^2\varsigma_A}(\frac{\zeta_{\max}^2}{\eta^2}+2L^2)\theta_k+\frac{10}{ \beta^2\varsigma_A}(\frac{\tau_s}{c_\tau}+\frac{\epsilon}{c_\epsilon}),\label{SM3distlam}
		\end{align}
		where the first inequality holds by Lemma \ref{SM3bdual}. Let
		\[\psi_1=\beta^2 \|B\|\|A\|,~\psi_2=3(L^2+\frac{\zeta_{\max}^2}{\eta^2}) ,~\psi_3=\frac{5}{\beta^2\varsigma_A}(2L^2+\frac{\zeta_{\max}^2}{\eta^2}).\]
		Since
		\[\sum_{k=1}^{K-1}\sum_{i=(n_k-1)q}^{k}\|x_{i+1}-x_i\|^2\leq q\sum_{k=1}^{K-1}\|x_{k+1}-x_k\|^2,\]
		by \eqref{SM3disty}-\eqref{SM3distlam} and Lemma \ref{SM3MFD}, we have
		\[\begin{aligned}
			&\min_{1\leq k \leq K}\mathbb{E}[{\rm dist}(0,\partial \mathcal{L}(w_k))^2]\leq \frac{\psi}{K} \sum_{k=1}^{K-1}\theta_k+(\frac{\tau_s}{c_\tau}+\frac{\epsilon}{c_\epsilon})\max\{\frac{10}{\beta^2 \varsigma_A},3\}\\
			&\leq\frac{1}{K}(\psi+\frac{1}{c_\tau }\max\{3,\frac{10}{\varsigma_A\beta^2}\})\sum_{k=1}^{K-1}\theta_k+\max\{3,\frac{10}{\varsigma_A\beta^2}\}(\frac{c_d}{c_\tau}+\frac{1}{c_\epsilon})\epsilon\\
			&\leq\frac{3\omega_1(\phi_0-\phi^*)}{\chi K}+\omega_2(\frac{c_d}{c_\tau}+\frac{1}{c_\epsilon})\epsilon,
		\end{aligned}
		\]
		where $\omega_1=\psi+\frac{1}{c_\tau}\max\{3,\frac{10}{\varsigma_A\beta^2}\}$, $\omega_2=\frac{3\omega_1}{\chi}(\frac{1}{2}+\frac{10}{\beta\varsigma_A})+\max\{\frac{10}{\beta^2 \varsigma_A},3\}$, $\psi=\max\{\psi_1,\psi_2,\psi_3\}$, the second inequality holds by the definition of $\tau_k$.
	\end{proof}
	
	We analyses the complexity of Algorithm \ref{alg3} in Corollary \ref{AbsspiderOC}.
	\begin{proof}[\textbf{Proof of Corollary \ref{AbsspiderOC}}] \label{coro2}
		Given $b=q\geq 1$ and $c_\tau=c_\epsilon=\frac{9(1+c_d)}{2}(4+\beta^2 \|B\|\|A\|)>1$. It follows that,
		\[\chi-1\geq\frac{\zeta_{\min}}{\eta}-\frac{L^2+L+4}{2}+\frac{\beta\varsigma_A}{2}-\frac{10}{\beta \varsigma_A}(1+2L^2+\frac{\zeta_{\max}^2}{\eta^2}).\]
		Let $\beta\geq\max\{\frac{20}{\varsigma_A},\sqrt{\frac{10}{3\varsigma_A}},\sqrt{\frac{3}{\|B\|\|A\|}(L^2+\frac{\zeta_{\max}^2}{\eta^2})},\frac{1}{\varsigma_A}\sqrt{20(1+2L^2+\frac{\zeta_{\max}^2}{\eta^2})}\}$ and $0<\eta<\frac{2\zeta_{\min}}{L^2+L+4}$.
		Then, we have $\chi>1$ and $\psi_3<\psi_2<\psi_1$.
		It easy verifies that $\psi=\psi_1=\beta^2 \|B\|\|A\|=\mathcal{O}(1)$, which is independent on $n$ and $K$.
		Thus, we have
		\[\begin{aligned}
			\min_{1\leq k \leq K}\mathbb{E}[{\rm dist}(0,\partial \mathcal{L}(w_k))^2]\leq&\frac{3\omega_1(\phi_0-\phi^*)}{\chi K}+\omega_2(\frac{c_d}{c_\tau}+\frac{1}{c_\epsilon})\epsilon\\
			\leq&\frac{3\omega_1(\phi_0-\phi^*)}{\chi K}+3(\omega_1+1)\frac{1+c_d}{c_\epsilon}\epsilon\\
			\leq&\frac{3(\psi+3)(\phi_0-\phi^*)}{\chi K}+\frac{2}{3}\epsilon.
		\end{aligned}\]
		Then, to find an $\epsilon$-approximate solution, the total number of iterations required by Algorithm \ref{alg3} is given by \[K=\frac{9(3+\beta^2\|B\|\|A\|)(\phi_0-\phi^*)}{\chi \epsilon}=\mathcal{O}(\epsilon^{-1}),\]
		and the total number of oracle complexity calls required by Algorithm \ref{alg3} is given by
		\[\begin{aligned}
			&\underset{\rm complexity~of~AbsSPIDER-ADMM}{\underbrace{\sum_{k=0}^{K-1}\min\{\frac{c_\tau\sigma^2}{\frac{1}{q}\sum_{i=(n_k-1)q}^{k}\|x_{i+1}-x_i\|^2},c_\epsilon\sigma^2\epsilon^{-1},n\}+Kb}}\\
			\leq&\underset{\rm complexity~of~SPIDER-ADMM}
			{\underbrace{K\min\{c_\epsilon\sigma^2\epsilon^{-1},n\}+Kb=\mathcal{O}(n+n^{\frac{1}{2}}\epsilon^{-1})}}.
		\end{aligned}\]
		The proof is completed.
	\end{proof}




\bibliographystyle{elsarticle-harv} 
\bibliography{references}

\end{document}